\titleformat{\section}{\normalfont\large\bfseries}{\thesection}{1em}{}
\titleformat{\subsection}{\normalfont\bfseries}{\thesubsection}{1em}{}
\definecolor{LinkColor}{rgb}{0,0,1}
\definecolor{LinkColor2}{rgb}{0,0.5,0}
\definecolor{lbcolor}{rgb}{0.85,0.85,0.85}
\definecolor{FrameColor}{rgb}{0.85,0.85,0.85}
\definecolor{rosso}{rgb}{0.8,0,0}
\definecolor{lightgray}{rgb}{0.5,0.5,0.5}
\definecolor{violet}{rgb}{0.65,0,0.65}
\definecolor{darkgreen}{rgb}{0,0.5,0}
\newtheorem{theorem}{Theorem}[section]
\newtheorem{proposition}[theorem]{Proposition}
\newtheorem{corollary}[theorem]{Corollary}
\newtheorem{definition}[theorem]{Definition}
\theoremstyle{definition}
\newtheorem{remark}[theorem]{Remark}
\renewenvironment{proof}[1][\proofname]{%
	\par\pushQED{\qed}\normalfont%
	\topsep6\p@\@plus6\p@\relax
	\trivlist\item[\hskip\labelsep\bfseries#1\@addpunct{.}]%
	\ignorespaces
}{%
	\popQED\endtrivlist\@endpefalse
}
\renewcommand\paragraph{\@startsection{paragraph}{4}{\z@}%
	{1ex \@plus1ex \@minus.2ex}%
	{-1em}%
	{\normalfont\normalsize\bfseries}}
\renewcommand\subparagraph{\@startsection{paragraph}{4}{\z@}%
	{1ex \@plus1ex \@minus.2ex}%
	{-1em}%
	{\normalfont\normalsize\itshape}}
\newcommand{\abs}[1]{\left| #1 \right|}
\newcommand{\bigabs}[1]{\big| #1 \big|}
\newcommand{\norm}[1]{\| #1 \|}
\newcommand{\bignorm}[1]{\big\| #1 \big\|}
\newcommand{\ang}[2]{ \langle #1 , #2  \rangle}
\newcommand{\bigang}[2]{ \big< #1 , #2  \big>}
\newcommand{\scp}[2]{ \left( #1 , #2  \right)}
\newcommand{\bigscp}[2]{\big( #1 , #2 \big)}
\newcommand{\meano}[1]{{\langle #1 \rangle}_{\Omega}}
\newcommand{\meang}[1]{{\langle #1 \rangle}_{\Gamma}}
\newcommand{\mean}[2]{\textnormal{mean}\scp{#1}{#2}}
\newcommand{\R}{\mathbb R}
\newcommand{\N}{\mathbb N}
\newcommand{\n}{\mathbf{n}}
\newcommand{\intO}{\int_\Omega}
\newcommand{\intG}{\int_\Gamma}
\newcommand{\ep}{\varepsilon}
\newcommand{\dtau}{\;\mathrm d\tau}
\newcommand{\dx}{\;\mathrm{d}x}
\newcommand{\ds}{\;\mathrm ds}
\newcommand{\dxt}{\;\mathrm{d}x\;\mathrm{d}t}
\newcommand{\dGt}{\;\mathrm{d}\Ga\;\mathrm{d}t}
\newcommand{\dxs}{\;\mathrm{d}x\;\mathrm{d}s}
\newcommand{\dGs}{\;\mathrm{d}\Ga\;\mathrm{d}s}
\newcommand{\dG}{\;\mathrm d\Ga}
\newcommand{\ddt}{\frac{\mathrm d}{\mathrm dt}}
\newcommand{\del}{\partial}
\newcommand{\delt}{\partial_{t}}
\newcommand{\deln}{\partial_\n}
\newcommand{\Grad}{\nabla}
\newcommand{\Lap}{\Delta}
\newcommand{\Div}{\textnormal{div}}
\newcommand{\Gradg}{\nabla_\Ga}
\newcommand{\Lapg}{\Delta_\Ga}
\newcommand{\Divg}{\textnormal{div}_\Ga}
\newcommand{\emb}{\hookrightarrow}
\newcommand{\suchthat}{\;\ifnum\currentgrouptype=16 \middle\fi|\;}
\newcommand{\Om}{\Omega}
\newcommand{\Ga}{\Gamma}
\begin{document}

%
%	TITLE PAGE
%

\title{\bfseries\Large 
    Strong well-posedness and separation properties\\
    for a bulk-surface convective Cahn--Hilliard system\\
    with singular potentials
\\[-1.5ex]$\;$}	

\author{
	Patrik Knopf \footnotemark[1]
    \and Jonas Stange \footnotemark[1]}

\date{ }

\maketitle

\renewcommand{\thefootnote}{\fnsymbol{footnote}}

\footnotetext[1]{
    Faculty for Mathematics, 
    University of Regensburg, 
    93053 Regensburg, 
    Germany \newline
	\tt(%
        \href{mailto:patrik.knopf@ur.de}{patrik.knopf@ur.de},
        \href{mailto:jonas.stange@ur.de}{jonas.stange@ur.de}%
        ).
}

\begin{center}
	\scriptsize
	%\color{white}
	{
		\textit{This is a preprint version of the paper. Please cite as:} \\  
		P.~Knopf, J.~Stange, 
        \textit{Journal of Differential Equations} \textbf{439}, no.~113408, 2025, \\
		\url{https://doi.org/10.1016/j.jde.2025.113408}
	}
\end{center}

\medskip

%
%	ABSTRACT
%

\begin{small}
\begin{center}
    \textbf{Abstract}
\end{center}
This paper addresses the well-posedness of a general class of bulk-surface convective Cahn--Hilliard systems with singular potentials. For this model, we first prove the existence of a global-in-time weak solution by approximating the singular potentials via a Yosida regularization, applying the corresponding results for regular potentials, and eventually passing to the limit in this approximation scheme. Then, we prove the uniqueness of weak solutions and their continuous dependence on the velocity fields and the initial data. Afterwards, assuming additional regularity of the domain as well as the velocity fields, we establish higher regularity properties of weak solutions and eventually the existence of strong solutions. In the end, we discuss strict separation properties for logarithmic type potentials in both two and three dimensions.
\\[1ex]
\textbf{Keywords:} convective Cahn--Hilliard equation, bulk-surface interaction, dynamic boundary conditions, strong solutions, separation property, Yosida approximation.
\\[1ex]	
\textbf{Mathematics Subject Classification:} 
35K35, %%% Initial-boundary value problems for higher-order parabolic equations
35D30, %%% Weak solutions to PDEs
35A01, %%% Existence problems for PDEs: global existence, local existence, non-existence
35A02, %%% Uniqueness problems for PDEs: global uniqueness, local uniqueness, non-uniqueness
35Q92, %%% PDEs in connection with biology, chemistry and other natural sciences
35B65. %%% Smoothness and regularity of solutions to PDEs
\end{small}

\begin{small}
\setcounter{tocdepth}{2}
\hypersetup{linkcolor=black}
\tableofcontents
\end{small}

\setlength\parindent{0ex}
\setlength\parskip{1ex}
\allowdisplaybreaks
\numberwithin{equation}{section}
%\linenumbers
\renewcommand{\thefootnote}{\arabic{footnote}}

\newpage

\section{Introduction} 
\label{SECT:INTRO}

In this paper, we investigate the following \textit{bulk-surface convective Cahn--Hilliard system}:
\begin{subequations}\label{CH}
    \begin{align}
        \label{CH:1}
        &\delt\phi + \Div(\phi\boldsymbol{v}) = \Div(m_\Om(\phi)\Grad\mu) && \text{in} \ Q, \\
        \label{CH:2}
        &\mu = -\epsilon \Lap\phi + \epsilon^{-1}F'(\phi)   && \text{in} \ Q, \\
        \label{CH:3}
        &\delt\psi + \Divg(\psi\boldsymbol{w}) = \Divg(m_\Ga(\psi)\Gradg\theta) - \beta m_\Om(\phi)\deln\mu && \text{on} \ \Sigma, \\
        \label{CH:4}
        &\theta = - \epsilon_{\Gamma}\kappa\Lapg\psi + \epsilon_{\Gamma}^{-1}G'(\psi) + \alpha\epsilon\deln\phi && \text{on} \ \Sigma, \\
        \label{CH:5}
        &\begin{cases} \epsilon K\deln\phi = \alpha\psi - \phi &\text{if} \ K\in [0,\infty), \\
        \deln\phi = 0 &\text{if} \ K = \infty
        \end{cases} && \text{on} \ \Sigma, \\
        \label{CH:6}
        &\begin{cases} 
        L m_\Om(\phi)\deln\mu = \beta\theta - \mu &\text{if} \  L\in[0,\infty), \\
        m_\Om(\phi)\deln\mu = 0 &\text{if} \ L=\infty
        \end{cases} &&\text{on} \ \Sigma, \\
        \label{CH:7}
        &\phi\vert_{t=0} = \phi_0 &&\text{in} \ \Om, \\
        \label{CH:8}
        &\psi\vert_{t=0} = \psi_0 &&\text{on} \ \Ga.
    \end{align}
\end{subequations}
This system of equations has already been investigated in \cite{Knopf2024}, and in the special case $K=0$, it appears as a subsystem in the Navier--Stokes--Cahn--Hilliard model with dynamic boundary conditions that was derived in \cite{Giorgini2023}.

In \eqref{CH}, $\Omega\subset\R^d$ (with $d=2,3$) denotes a bounded domain with boundary $\Ga\coloneqq\del\Om$, $T>0$ is a given final time, and for brevity we write $Q\coloneqq \Om\times(0,T)$ and $\Sigma\coloneqq\Ga\times(0,T)$. We denote by $\mathbf{n}$ the outward pointing unit normal vector on $\Ga$ and by $\deln$ the outward normal derivative on the boundary. Furthermore, the symbols $\Gradg$, $\Divg$, and $\Lapg$ stand for the surface gradient, the surface divergence, and the Laplace--Beltrami operator on $\Ga$, respectively. 

The functions $\phi:Q\rightarrow\R$ and $\mu:Q\rightarrow\R$ represent the phase-field and the chemical potential in the bulk, whereas $\psi:\Sigma\rightarrow\R$ and $\theta:\Sigma\rightarrow\R$ denote the phase-field and the chemical potential on the surface, respectively. Moreover, the functions $\boldsymbol{v}:Q\rightarrow\R^d$ and $\boldsymbol{w}:\Sigma\rightarrow\R^d$ are prescribed velocity fields corresponding to the flow of the two materials in the bulk and on the boundary, respectively. Furthermore, the parameter $\epsilon > 0$ is related to the thickness of the diffuse interface in the bulk, while $\epsilon_\Ga > 0$ corresponds to the width of the diffuse interface on the boundary. The constant $\kappa > 0$ acts as a weight for surface diffusion effects.

In \eqref{CH}, the time evolution of $\scp{\phi}{\mu}$ is given by the \textit{bulk convective Cahn--Hilliard subsystem} \eqref{CH:1}-\eqref{CH:2}, while the evolution of $\scp{\psi}{\theta}$ is determined by the \textit{surface convective Cahn--Hilliard subsystem} \eqref{CH:3}-\eqref{CH:4}, which is coupled to the bulk variables by means of the involved normal derivatives $\deln\phi$ and $\deln\mu$. Moreover, the bulk and surface quantities are coupled by the boundary conditions \eqref{CH:5} and \eqref{CH:6}, which depend on parameters $K, L \in[0,\infty]$ and $\alpha, \beta\in\R$.

In \eqref{CH:5} and \eqref{CH:6}, the parameters $K, L\in[0,\infty]$ are used to distinguish different cases, each corresponding to a certain solution behavior related to a physical situation. The case that has been studied most extensively in the literature is $K=0$, which corresponds to the Dirichlet condition $\phi = \alpha\psi$ on $\Sigma$. This case makes particular sense along with $\alpha = 1$, if the materials on the surface are the same as in the bulk. Nevertheless, we also want to cover the case where the materials on the boundary are not the same as in the bulk. For instance, this might be the case if the materials on the boundary are transformed by chemical reactions. This is taken into account in our model by the choice $K\in(0,\infty]$. If $K=\infty$, the boundary condition \eqref{CH:5} degenerates to a homogeneous Neumann boundary condition for the phase-field $\phi$. Although, for most applications, this might not be the preferred choice, as such a Neumann boundary condition enforces the diffuse interface to always intersect the boundary at a perfect angle of ninety degrees, we include this case in our analysis for the sake of completeness. In the case $K\in(0,\infty)$, \eqref{CH:5} can be regarded as a Robin-type boundary condition. It can be used to describe a physical scenario where $\phi$ and $\psi$ represent different materials, and therefore, $\psi$ and the trace of $\phi$ are (in general) not proportional. However, this setup still allows for a dynamic change of the contact angle between the diffuse interface and the boundary. 
In some sense, including the cases $K\in(0,\infty)$ is also helpful for the mathematical analysis. For instance, even for regular potentials $F$ and $G$, a Faedo--Galerkin scheme to construct weak solutions cannot be used in the case $K=0$ and $L\in (0,\infty]$. For more details, we refer to \cite{Colli2024, Knopf2024}.

The bulk and surface chemical potentials $\mu$ and $\theta$ are coupled through the boundary condition \eqref{CH:5} involving a parameter $L\in[0,\infty]$, which accounts for a possible transfer of material between bulk and surface, see \cite{Knopf2021a}. For the non-convective case, i.e., $\boldsymbol{v}\equiv \mathbf{0}$ and $\boldsymbol{w}\equiv \mathbf{0}$, the choice $L=0$ was first proposed in \cite{Gal2006} and \cite{Goldstein2011}. Then, $\mu$ and $\theta$ are coupled through the Dirichlet condition $\mu = \beta\theta$ on $\Sigma$, which means that $\mu$ and $\theta$ are assumed to always remain in chemical equilibrium. In this case, a rapid transfer of material between bulk and boundary can be expected (see, e.g., \cite{Knopf2021a}). The choice $L=\infty$ was introduced in \cite{Liu2019}. In this case, \eqref{CH:6} is a homogeneous Neumann boundary condition on $\mu$, which means that the mass flux between the bulk and the surface is zero, and consequently, no transfer of material between bulk and surface will occur. In such a situation, the chemical potentials $\mu$ and $\theta$ are not directly coupled. The choice $L\in(0,\infty)$ was proposed and analyzed in \cite{Knopf2021a}. Here, the chemical potentials $\mu$ and $\theta$ are coupled by a Robin-type boundary condition \eqref{CH:6}. In this case, a transfer of material between bulk and surface will occur, and the coefficient $L^{-1}$ is related to the kinetic rate associated with adsorption and desorption processes or chemical reactions at the boundary. Furthermore, it was shown in \cite{Knopf2021a, Lv2024} (in the non-convective case) and in \cite{Knopf2024} (in the convective case), that the limits $L\rightarrow 0$ and $L\rightarrow\infty$ can be used to recover the boundary condition \eqref{CH:6} with $L = 0$ and $L = \infty$, respectively.

The functions $F^\prime$ and $G^\prime$ are the derivatives of double-well potentials $F$ and $G$, respectively.
A physically motivated example of the potentials $F$ and $G$, especially in applications related to material science, is the \textit{logarithmic potential}, which is also referred to as the \textit{Flory--Huggins potential}. It is given by
\begin{align}\label{DEF:W:LOG}
    W_{\textup{log}}(s) \coloneqq \frac{\Theta}{2}\big[(1+s)\ln(1+s) + (1-s)\ln(1-s)\big] -\frac{\Theta_c}{2}s^2, \quad s\in[-1,1].
\end{align}

Here, $\Theta$ represents the temperature of the system and $\Theta_c$ stands for the critical temperature under which phase separation occurs. These constants are supposed to satisfy $0 < \Theta < \Theta_c$. The logarithmic potential is  often approximated by a \textit{regular double-well potential} which reads as
\begin{align}\label{DEF:W:REG}
    W_{\textup{reg}}(s) \coloneqq c(s^2-1)^2, \quad s\in\R
\end{align}
for a suitable constant $c > 0$. A further common choice is the \textit{double-obstacle potential}, which is given by
\begin{align}\label{DEF:W:DOB}
    W_{\textup{obst}}(s) \coloneqq \begin{cases}
        1 - s^2, &\text{if~} s\in[-1,1], \\
        +\infty, &\text{if~} s\in\R\setminus[-1,1].
    \end{cases}
\end{align}

In contrast to the regular potential $W_{\text{reg}}$, the potentials $W_{\text{log}}$ and $W_{\text{obst}}$ are classified as \textit{singular potentials} as these functions or their derivatives exhibit singularities at $\pm 1$. 
Physically reasonable singular potentials can be decomposed as the sum of a smooth part and a convex part, which might not be differentiable but it at least possesses a subdifferential that is a maximal monotone graph.

An important quantity associated with the system \eqref{EQ:SYSTEM} is the energy functional
\begin{align}
    \label{INTRO:ENERGY}
    \begin{split}
        E_K(\phi,\psi) &= \intO \frac{\epsilon}{2}\abs{\Grad\phi}^2 + \epsilon^{-1}F(\phi) \dx + \intG \frac{\epsilon_{\Gamma}\kappa}{2}\abs{\Gradg\psi}^2 + \epsilon_{\Gamma}^{-1}G(\psi) \dG \\
        &\quad + \sigma(K)\intG\frac{1}{2}\abs{\alpha\psi - \phi}^2\dG,
    \end{split}
\end{align}
where the function
\begin{align*}
    \sigma(K) \coloneqq
    \begin{cases}
        K^{-1}, &\text{if } K\in (0,\infty), \\
        0, &\text{if } K\in\{0,\infty\}
    \end{cases}
\end{align*}
is used to distinguish the different cases corresponding to the choice of $K$. Sufficiently regular solutions of the system \eqref{EQ:SYSTEM} satisfy the the \textit{mass conservation law}
\begin{align}
    \label{INTRO:MASS}
    \begin{dcases}
        \beta\intO \phi(t)\dx + \intG \psi(t)\dG = \beta\intO \phi_0 \dx + \intG \psi_0\dG, &\textnormal{if } L\in[0,\infty), \\
        \intO\phi(t)\dx = \intO\phi_0\dx \quad\textnormal{and}\quad \intG\psi(t)\dG = \intG\psi_0\dG, &\textnormal{if } L = \infty
    \end{dcases}
\end{align}
for all $t\in[0,T]$. This means that the mass is conserved separately in $\Om$ and $\Ga$ in the case $L=\infty$, whereas in the case $L\in[0,\infty)$, a transfer of material between bulk and surface is allowed. Furthermore, sufficiently regular solutions satisfy the \textit{energy identity}
\begin{align}
    \label{INTRO:ENERGY:ID}
    \begin{split}
        \ddt E_K(\phi,\psi)  &= \intO\phi\boldsymbol{v}\cdot\Grad\mu\dx + \intG \psi\boldsymbol{w}\cdot\Gradg\theta\dG 
        - \intO m_\Om(\phi)\abs{\Grad\mu}^2\dx\\
        &\qquad
         - \intG m_\Ga(\psi)\abs{\Gradg\theta}^2\dG
        - \sigma(L) \intG (\beta\theta-\mu)^2\dG 
    \end{split}
\end{align}
on $[0,T]$. Note that in the non-convective case, i.e., $\boldsymbol{v} \equiv \mathbf{0}$ and $\boldsymbol{w} \equiv \mathbf{0}$, the right-hand side of \eqref{INTRO:ENERGY:ID} is clearly non-positive. This means that the energy dissipates over the course of time, and the terms on the right-hand side of \eqref{INTRO:ENERGY:ID} can be interpreted as the dissipation rate. In the convective case, i.e., $\boldsymbol{v}$ and/or $\boldsymbol{w}$ are non-trivial, we can (in general) not infer dissipation of the energy from the energy identity \eqref{INTRO:ENERGY:ID}. However, if it holds $\boldsymbol{w} = \boldsymbol{v}_\tau$ and the velocity field $\boldsymbol{v}$ is determined by a Navier--Stokes equations (cf. \cite{Giorgini2023, Gal2023a}), or a Brinkman/Stokes equation (cf. \cite{Colli2024}), an energy dissipation for the corresponding total energy can be obtained.

\paragraph{State of the art and related literature.}
In the case of regular potentials, system \eqref{CH} has already been studied in \cite{Knopf2024} by the authors. There, the existence of a weak solution has first been established in the cases $K,L\in(0,\infty)$ by means of a Faedo--Galerkin scheme. For all other cases, the existence of a weak solution was shown by passing to the respective asymptotic limits, i.e., $K\rightarrow 0$, $K\rightarrow\infty$, $L\rightarrow 0$ or $L\rightarrow\infty$.

In the recent contributions \cite{Lv2024, Lv2024a, Lv2024b}, a similar system to \eqref{CH} with singular potentials was investigated. There, they considered the special case without convection terms (i.e., $\boldsymbol{v}\equiv \mathbf{0}$ and $\boldsymbol{w} \equiv \mathbf{0}$), and only the choices $(K,L)\in \big(\{0\}\times [0,\infty]\big) \cup \big([0,\infty) \times \{\infty\}\big)$.

For further results on the mathematical analysis of system \eqref{CH} in the non-convective case (i.e., $\boldsymbol{v}\equiv \mathbf{0}$ and $\boldsymbol{w} \equiv \mathbf{0}$), we refer to \cite{Garcke2020,Garcke2022,Colli2020,Colli2022,Colli2022a,Fukao2021,Miranville2020}.
Related numerical results can be found in \cite{Metzger2021,Metzger2023,Harder2022,Meng2023,Bao2021,Bao2021a}. 
A nonlocal variant of the non-convective version of \eqref{CH} was proposed and analyzed in \cite{Knopf2021b}. Further results on the Cahn--Hilliard equation with dynamic boundary conditions of second-order (e.g., of Allen--Cahn or Wentzell type) can be found, for instance, in \cite{Colli2015,Colli2014,Miranville2010,Racke2003,Wu2004,Gal2009,Gilardi2010,Cavaterra2011}.

\paragraph{Goals and novelties of this paper.}
\begin{enumerate}[label=\textnormal{\bfseries(\Roman*)},topsep=0ex]
    \item \textit{Well-posedness of the convective Cahn--Hilliard system \eqref{CH} with singular potentials.} 
    In the present paper, we first extend the results for regular potentials obtained in \cite{Knopf2024} to the case of singular potentials. 
    This means we establish the existence of a global weak solution (see Theorem~\ref{THEOREM:EOWS}) as well as continuous dependence on the initial data and the velocity fields in the case of constant mobilities (see Theorem~\ref{THEOREM:UNIQUE:SING}), which entails the uniqueness of the respective weak solution. 
    Thanks to \cite{Knopf2024}, we already have the existence of a global weak solution of \eqref{CH} at hand, provided that $F$ and $G$ are regular potentials. Therefore, our strategy to approach singular potentials is to regularize the subdifferentials of their convex parts by means of a Yosida approximation. Then, the approximate problem can be solved by means of the results established in \cite{Knopf2024}. 
    Next, we derive suitable bounds on the approximate solutions which are uniform with respect to the approximation parameter $\ep$ from the Yosida approximation. Eventually, using compactness arguments, we can pass to the limit $\ep\rightarrow 0$, which yields a weak solution to \eqref{CH} with the original singular potentials. The continuous dependence estimate can be proved by the energy method.     
    In this way, our new results extend the existing literature concerning the well-posedness of convective Cahn--Hilliard system with dynamic boundary conditions and singular potentials (see, e.g., \cite{Colli2018, Colli2019,Gilardi2019}).
    \item \textit{Existence of strong solutions.} The next step is to prove higher regularity results for weak solutions in the case of constant mobilities and under additional regularity assumptions of the velocity fields (see Theorem~\ref{thm:highreg}). These higher regularity results are established by approximating the time derivatives in \eqref{CH} by suitable difference quotients, deriving uniform estimates, and applying regularity theory for bulk-surface elliptic systems (see Proposition~\ref{Prop:Appendix} and \cite[Theorem 3.3]{Knopf2021a}).
    \item \textit{Separation properties.} 
    Eventually, we show strict separation properties for the unique strong solution (see Theorem~\ref{thm:sepprop}). 
    This means that the corresponding phase-fields will stay strictly away from the pure phases $\pm 1$. For a general class of singular potentials (see~\ref{S1}-\ref{S4}), we obtain:
    \begin{align}
        \label{sepprop:intro:3d}
        \begin{aligned}
        &\text{For almost all $t\in[0,T]$, there exists $\delta(t)\in(0,1]$ such that:}\\
        &
        \norm{\phi(t)}_{L^\infty(\Omega)} \leq 1 - \delta(t)
        \quad\text{and}\quad
        \norm{\psi(t)}_{L^\infty(\Gamma)} \leq 1 - \delta(t).
        \end{aligned}
    \end{align}
    If $G$ is a logarithmic type potential (see~\ref{S6}), $\psi$ even satisfies the uniform strict separation property:
    \begin{align}
        \label{sepprop:psi:intro}
        \text{There exists $\delta_\ast\in(0,1]$ such that for all $(z,t)\in\Sigma$:}\quad
        \abs{\psi(z,t)}\leq 1 - \delta_\ast.
    \end{align}
    If $d=2$ and $F$ is a logarithmic type potential (see~\ref{S5}), we analogously obtain the uniform strict separation property for $\phi$:
    \begin{align}
        \label{sepprop:phi:intro:2d}
        \text{There exists $\delta_\star\in(0,1]$ such that for all $(x,t) \in Q$:}\quad
        \abs{\phi(x,t)} \leq 1 - \delta_\star.
    \end{align}
    It is not surprising that this result only holds in two dimensions, since even for the standard Cahn--Hilliard equation with homogeneous Neumann boundary conditions on both $\phi$ and $\mu$, the uniform strict separation property \eqref{sepprop:phi:intro:2d} has not yet been established in three dimensions. However, as the boundary $\Gamma$ is a two-dimensional submanifold, we are still able to obtain \eqref{sepprop:psi:intro} by adapting the techniques for the two-dimensional Cahn--Hilliard equation in the bulk.

    For the non-convective bulk-surface Cahn--Hilliard system (i.e., \eqref{CH} with $\boldsymbol{v}\equiv \mathbf{0}$ and $\boldsymbol{w}\equiv \mathbf{0}$), such separation properties have already been established in the cases $(K,L)\in \big(\{0\}\times [0,\infty]\big) \cup \big([0,\infty) \times \{\infty\}\big)$ (see \cite{Fukao2021, Lv2024a, Lv2024b}).

    In the present paper, we extend these results by showing the above separation properties for the \textit{convective} bulk-surface Cahn--Hilliard sysem (i.e., with non-trivial velocity fields $\boldsymbol{v}$ and $\boldsymbol{w}$) for all parameter choices $(K,L) \in [0,\infty]^2$.

    We further refer to \cite{Garcke2023} and \cite{Gal2017,Giorgini2024,Poiatti2025} for strict separation properties of the anisotropic Cahn--Hilliard equation and the nonlocal Cahn--Hilliard equation, respectively.
\end{enumerate}

\paragraph{Structure of this paper.} After introducing the notion of a weak solution of \eqref{EQ:SYSTEM}, we state our main results in Section~\ref{SECT:MAINRESULTS}. In Section~\ref{SECT:EOWS}, we prove the existence of weak solutions to the Cahn--Hilliard system \eqref{EQ:SYSTEM}. Afterwards, in Section~\ref{SECT:UNIQUENESS}, we establish the uniqueness of the weak solutions and its continuous dependence on the velocity fields and the initial data. Section~\ref{SECT:HIGHREG} is devoted to proving higher regularity results and in particular, the existence of strong solutions under suitable additional assumptions. Lastly, for logarithmic type potentials, we establish the separation properties in Section~\ref{SECT:SEPARATION}.

\section{Notation, assumptions and preliminaries} 
\label{SECT:PRELIM}

We fix some notation and assumptions that are supposed to hold throughout the remainder of this paper.

\subsection{Notation}
\begin{enumerate}[label=\textnormal{\bfseries(N\arabic*)}]
    \item $\N$ denotes the set of natural numbers excluding zero, whereas $\N_0 = \N\cup\{0\}$.
    
    \item Let $\Omega \subset \R^d$ with $d\in\{2,3\}$ be a bounded domain with sufficiently regular boundary $\Gamma \coloneqq\del\Omega$. For any $p\in[1,\infty]$ and $s\geq 0$, the Lebesgue and Sobolev spaces for functions mapping from $\Om$ to $\R$ are denoted as $L^p(\Om)$ and $W^{s,p}(\Om)$, respectively. Their standard norms are denoted by $\norm{\cdot}_{L^p(\Om)}$ and $\norm{\cdot}_{W^{s,p}(\Om)}$. In the case $p=2$, we write $H^s(\Om) = W^{s,2}(\Om)$. In particular, $H^0(\Om)$ can be identified with $L^2(\Om)$. The Lebesgue and Sobolev spaces on $\Ga$ are defined analogously, provided that $\Gamma$ is sufficiently regular.
    For vector-valued functions mapping from $\Om$ into $\R^d$, we write $\mathbf{L}^p(\Om)$, $\mathbf{W}^{s,p}(\Om)$ and $\mathbf{H}^s(\Om)$. For functions mapping from $\Ga$ to $\R^d$, we use an analogous notation.
    For any real numbers $p\in[1,\infty]$ and $s\geq 0$ and any Banach space $X$, the Bochner spaces of functions mapping from an interval $I$ into $X$ are denoted by $L^p(I;X)$ and $W^{s,p}(I;X)$.
    Furthermore, for any interval $I$ and any Banach space $X$, we denote the space of continuous functions mapping from $I$ to $X$ by $C(I;X)$.    
    
    \item For any Banach space $X$, its dual space is denoted by $X'$. The associated duality pairing of elements $\phi\in X'$ and $\zeta\in X$ is denoted by $\ang{\phi}{\zeta}_X$. If $X$ is a Hilbert space, denote its inner product by $\scp{\cdot}{\cdot}_X$. 
    
    \item For any bounded domain $\Om\subset \R^d$ ($d\in\N$) with Lipschitz boundary $\Ga$, and any $u\in H^1(\Om)'$ and $v\in H^1(\Ga)'$, we write
    \begin{align*}
        \meano{u}\coloneqq 
        \frac{1}{\abs{\Om}}\ang{u}{1}_{H^1(\Om)},
        \qquad
        \meang{v}\coloneqq 
        \frac{1}{\abs{\Ga}}\ang{v}{1}_{H^1(\Ga)}
    \end{align*}
    to denote the generalized means of $u$ and $v$, respectively. Here, $\abs{\Om}$ denotes the $d$-dimensional Lebesgue measure of $\Omega$, whereas $\abs{\Ga}$ denotes the $(d-1)$-dimensional Hausdorff measure of $\Gamma$. If $u\in L^1(\Om)$ or $v\in L^1(\Ga)$, their generalized mean can be expressed as
    \begin{align*}
        \meano{u} = \frac{1}{\abs{\Om}} \intO u \dx,
        \qquad
        \meang{v} = \frac{1}{\abs{\Ga}} \intG v \dG,
    \end{align*}
    respectively.
    \item For any bounded domain $\Om\subset \R^d$ ($d\in\N$) with Lipschitz boundary $\Gamma\coloneqq\partial\Omega$ and any $2\leq p <\infty$, we introduce the spaces
    \begin{align*}
        \mathbf{L}^p_\Div(\Om) &\coloneqq \{\boldsymbol{v}\in\mathbf{L}^p(\Om) : \Div\;\boldsymbol{v} = 0 \ \text{in~} \Om, \ \boldsymbol{v}\cdot\mathbf{n} = 0 \ \text{on~} \Ga\},
        \\
        \mathbf{L}^p_\tau(\Ga)&\coloneqq\{\boldsymbol{w}\in\mathbf{L}^p(\Ga) : \boldsymbol{w}\cdot\n = 0 \ \text{on~}\Ga\},
        \\
        \mathbf{L}^p_\Div(\Ga)&\coloneqq\{\boldsymbol{w}\in\mathbf{L}^p_\tau(\Ga) : \Divg\;\boldsymbol{w} = 0 \ \text{on~}\Ga\}.
    \end{align*}
    We point out that in the definitions of $\mathbf{L}^p_\Div(\Om)$ and $\mathbf{L}^p_\Div(\Ga)$, the expressions $\Div\;\boldsymbol{v}$ in $\Om$ and $\Divg\;\boldsymbol{w}$ on $\Ga$ are to be understood in the sense of distributions. If $\boldsymbol{v}\in \mathbf{L}^p(\Om)$ ($p\ge 2$) with $\Div\;\boldsymbol{v} = 0$ in $\Om$, we already know that $\boldsymbol{v}\cdot\mathbf{n} \in H^{-1/2}(\Ga)$, and therefore, the relation $\boldsymbol{v}\cdot\mathbf{n} = 0$ on $\Ga$ is well-defined.

\end{enumerate}

\subsection{Assumptions}

\begin{enumerate}[label=\textnormal{\bfseries(A\arabic*)}]
    \item  \label{ASSUMP:1} We consider a bounded domain $\emptyset\neq \Om\subset\R^d$ with $d\in\{2,3\}$ with Lipschitz boundary $\Ga\coloneqq\del\Om$ and a final time $T>0$. We further use the notation
    \begin{align*}
        Q\coloneqq \Om\times(0,T), \quad\Sigma\coloneqq\Ga\times(0,T).
    \end{align*}
    
    \item \label{ASSUMP:2} The constants occurring in the system \eqref{CH} satisfy $\epsilon, \epsilon_\Ga, \kappa > 0$, and $\alpha, \beta\in \R$ with $\alpha\beta\abs{\Omega} + \abs{\Gamma} \neq 0$. Since the choice of $\epsilon, \epsilon_\Ga$ and $\kappa$ has no impact on the mathematical analysis, we will simply set $\epsilon = \epsilon_\Ga = \kappa = 1$ without loss of generality.
    
    \item \label{ASSUMP:MOBILITY} The mobility functions $m_\Om:\R\rightarrow\R$ and $m_\Ga:\R\rightarrow\R$ are continuous, bounded, and uniformly positive. This means that there exist positive constants $m_\Om^\ast, M_\Om^\ast, m_\Ga^\ast, M_\Ga^\ast$ such that for all $s\in\R$,
    \begin{align*}
        0 < m_\Om^\ast \leq m_\Om(s) \leq M_\Om^\ast, \quad\text{and}\quad 0 < m_\Ga^\ast \leq m_\Ga(s) \leq M_\Ga^\ast.
    \end{align*}
    
\end{enumerate}

For the potentials $F,G :\R\rightarrow[0,\infty]$, we assume the decompositions $F = F_1 + F_2$ and $G = G_1 + G_2$ with the following properties:
\begin{enumerate}[label=\textnormal{\bfseries(S\arabic*)}]
    \item \label{S1} $F_1,G_1:\R\rightarrow[0,\infty]$ are proper, lower semicontinuous and convex functions with $F_1(0) = G_1(0) = 0$. Their effective domains are denoted by $D(F_1)$ and $D(G_1)$. We further define
    \begin{align*}
        f_1 \coloneqq \del F_1, \quad g_1 \coloneqq \del G_1,
    \end{align*}
    where $\del$ indicates the subdifferential. These subdifferentials are maximal monotone graphs in $\R\times\R$ (see, e.g., \cite[Theorem 2.8]{Barbu2010}), whose effective domains are denoted by $D(f_1)$ and $D(g_1)$, respectively, and satisfy $0\in f_1(0) \cap g_1(0)$. To be precise, it holds
    \begin{align*}
        D(f_1) \coloneqq\{r\in D(F_1) : f_1(r)\neq\emptyset\},
    \end{align*}
    and $D(g_1)$ is defined analogously.
    Moreover, we postulate the growth conditions
    \begin{align}\label{growth-sing}
        \lim_{\abs{r}\rightarrow\infty} \frac{F_1(r)}{\abs{r}^2} = +\infty
        \quad\text{and}\quad
        \lim_{\abs{r}\rightarrow\infty} \frac{G_1(r)}{\abs{r}^2} = +\infty.
    \end{align}
    
    \item \label{S2} $F_2, G_2:\R\rightarrow\R$ are continuously differentiable functions with Lipschitz continuous derivatives $f_2 \coloneqq F_2^\prime$ and $g_2 \coloneqq  G_2^\prime$, respectively.
    \item \label{S3} It holds $\alpha D(g_1) \subseteq D(f_1)$, that is for any $r\in D(g_1)$ we have $\alpha r\in D(f_1)$, and the boundary graph dominates the bulk graph in the following sense: There exist $\kappa_1,\kappa_2 > 0$ such that
    \begin{align}\label{domination}
        \abs{f_1^\circ(\alpha r)} \leq\kappa_1\abs{g_1^\circ(r)} + \kappa_2
        \quad\text{for all $r\in D(g_1)$.}
    \end{align}
    Here, $\alpha$ denotes the constant occurring in \eqref{CH} (cf.~\ref{ASSUMP:2}) and $f_1^\circ$ denotes the minimal section of the graph $f_1$, i.e., 
    \begin{align*}
        f_1^\circ(r) \coloneqq \Big\{ r^\ast \in f_1(r) : \abs{r^\ast} = \min_{s\in f_1(r)} \abs{s} \Big\}
        \quad\text{for all $r\in D(f_1)$.}
    \end{align*}
    The minimal section $g_1^\circ$ of the graph $g_1$ is defined analogously. It is well-known that $f_1^\circ$ and $g_1^\circ$ are single-valued. Therefore, they can be interpreted as functions
    \begin{align*}
        f_1^\circ:D(f_1) \to \R
        \quad\text{and}\quad
        g_1^\circ:D(g_1) \to \R.
    \end{align*}
\end{enumerate}
$\;$

For some results that will be established in this paper, we need additional assumptions on the potentials $F$ and $G$.
\begin{enumerate}[label=\textnormal{\bfseries(S\arabic*)},start=4]
    \item \label{S4} It additionally holds $F_1,G_1\in C([-1,1])\cap C^2(-1,1)$,
    $F_1(s) = G_1(s) = +\infty$ for all $s\in\R\setminus [-1,1]$, and
    \begin{align*}
        \lim_{s\searrow -1} F_1^\prime(s) = \lim_{s\searrow -1} G_1^\prime(s) = -\infty \quad\text{and}\quad
        \lim_{s\nearrow 1} F_1^\prime(s) = \lim_{s\nearrow 1} G_1^\prime(s) = +\infty.
    \end{align*}
    Moreover, there exists a constant $\Theta > 0$ such that
    \begin{align*}
        F_1^{\prime\prime}(s) \geq \Theta \quad\text{and}\quad G_1^{\prime\prime}(s)\geq \Theta
        \quad\text{for all $s\in(-1,1)$.}
    \end{align*}
    This means that $F_1$ and $G_1$ are strongly convex on $[-1,1]$.
    Without loss of generality, we assume $F_1(0) = G_1(0) = 0$ and $F_1^\prime(0) = G_1^\prime(0) = 0$. In particular, we thus have 
    \begin{align*}
        F_1(s) \geq 0 \quad\text{and}\quad G_1(s) \geq 0
        \quad\text{for all $s\in[-1,1]$}
    \end{align*}
    due to the strong convexity of $F_1$ and $G_1$, respectively. Furthermore, we assume that 
    \begin{align*}
        -1 \leq \alpha \leq 1.
    \end{align*}
    \item \label{S5} In addition to the assumptions made in \ref{S4}, there exist constants $C_1,C_2 > 0$ as well as $\lambda\in[1,2)$ such that
    \begin{align}
        \label{EST:LOGPOT}
        \abs{F_1^{\prime\prime}(s)} \leq C_1 \mathrm{e}^{C_2 \abs{F_1^\prime(s)}^\lambda}
        \quad\text{for all~}s\in(-1,1).
    \end{align}
    \item \label{S6} In addition to the assumptions made in \ref{S4}, there exist constants $c_1,c_2 > 0$ as well as $\gamma\in[1,2)$ such that
    \begin{align}
        \label{EST:LOGPOT:G}
        \abs{G_1^{\prime\prime}(s)} \leq c_1 \mathrm{e}^{c_2 \abs{G_1^\prime(s)}^\gamma}
        \quad\text{for all~}s\in(-1,1).
    \end{align}
\end{enumerate}

Similar assumptions on the potentials have first been made in \cite{Calatroni2013}, and then in various contributions in the literature; see, e.g., \cite{Colli2015, Colli2020, Colli2022, Colli2014, Colli2017, Colli2018, Colli2018a, Colli2018b, Colli2022a, Colli2024}.

\medskip

\begin{remark}
    We point out that \ref{S3} excludes the cases where $F_1$ is a singular potential and $G_1$ is a regular potential, unless $\alpha=0$.
    
    Let us consider the classical choices $W_\mathrm{reg}$, $W_\mathrm{log}$ and $W_\mathrm{obst}$ introduced in \eqref{DEF:W:REG}, \eqref{DEF:W:LOG} and \eqref{DEF:W:DOB}.     
    These potentials can be decomposed into
    \begin{gather*}
        W_\mathrm{reg} = W_{\mathrm{reg},1} + W_{\mathrm{reg},2},
        \quad
        W_\mathrm{log} = W_{\mathrm{log},1} + W_{\mathrm{log},2},
        \quad
        W_\mathrm{reg} = W_{\mathrm{obst},1} + W_{\mathrm{obst},2}
    \end{gather*}
    with
    \begin{alignat*}{2}
        W_{\mathrm{reg},1}(s) &= c(s^4+1)
        &&\quad\text{for~} s\in D(W_{\mathrm{reg},1}) = \R,
        \\
        W_{\mathrm{reg},1}'(s) &= 4cs^3
        &&\quad\text{for~} s\in D(\del W_{\mathrm{reg},1}) = \R,
        \\
        W_{\mathrm{reg},2}(s) &= -2cs^2
        &&\quad\text{for~} s\in \R,
        \\[1ex]
        W_{\mathrm{log},1}(s) &= \tfrac{1}{2}\Theta\big[(1+s)\ln(1+s) + (1-s)\ln(1-s)\big]
        &&\quad\text{for~} s\in D(W_{\mathrm{log},1}) = [-1,1],
        \\
        W_{\mathrm{log},1}'(s) &= \tfrac{1}{2}\Theta\big[\ln(1+s) - \ln(1-s)\big]
        &&\quad\text{for~} s\in D(\del W_{\mathrm{log},1}) = (-1,1),
        \\
        W_{\mathrm{log},2}(s) &= -\tfrac{1}{2}\Theta_cs^2
        &&\quad\text{for~} s\in \R,
        \\[1ex]
        W_{\mathrm{obst},1}(s) &= 0
        &&\quad\text{for~} s\in D(W_{\mathrm{obst},1}) = [-1,1],
        \\
        W_{\mathrm{obst},1}'(s) &= 0
        &&\quad\text{for~} s\in (-1,1),
        \\
        W_{\mathrm{obst},2}(s) &= 1-s^2
        &&\quad\text{for~} s\in \R.
    \end{alignat*}
    In the above formula for $W_{\mathrm{log},1}$, we used the convention $0\ln 0 = 0$. We further have $D(\del W_{\mathrm{obst},1}) = [-1,1]$ with $\del W_{\mathrm{obst},1}(-1) = (-\infty,0]$ and $\del W_{\mathrm{obst},1}(-1) = [0,\infty)$.
    Therefore, the following holds:
    \begin{enumerate}[label={$\bullet$},topsep=0ex]
        \item The choice $(F,G) = (W_\mathrm{log},W_\mathrm{log})$ satisfies \ref{S1}-\ref{S6}.
        \item The choices 
        $(F,G) = (W_\mathrm{reg},W_\mathrm{reg})$, 
        $(F,G) = (W_\mathrm{reg},W_\mathrm{log})$ and  
        $(F,G) = (W_\mathrm{reg},W_\mathrm{obst})$  
        satisfy \ref{S1}-\ref{S3}, but not \ref{S4} and \ref{S5}. 
        Moreover, \ref{S6} is fulfilled if $G=W_\mathrm{log}$.
        
        \item The choices 
        $(F,G) = (W_\mathrm{log},W_\mathrm{reg})$ and 
        $(F,G) = (W_\mathrm{obst},W_\mathrm{reg})$ 
        satisfy \ref{S1}-\ref{S3} if and only if $\alpha=0$. 
        
        If $\alpha\neq 0$,
        these choices are not allowed since $\alpha D(g_1) \not\subset  D(f_1)$ and thus, \ref{S3} is not fulfilled.

        Obviously, \ref{S4} and \ref{S6} are not fulfilled for any choice of $\alpha\in\R$.
        Moreover, \ref{S5} is fulfilled if $F=W_\mathrm{log}$.
        
        \item The choice 
        $(F,G)=(W_\mathrm{log}, W_\mathrm{obst})$ satisfies \ref{S1}-\ref{S3}
        if and only if $\abs{\alpha}<1$. 
        
        For $\abs{\alpha}< 1$, we have 
        $$\alpha D(g_1) = \big[-\abs{\alpha}, \abs{\alpha}\big] \subset (-1,1) = D(f_1),$$
        and the domination property \eqref{domination} holds since $W_{\mathrm{log},1}'$ 
        remains bounded on $\alpha D(g_1)$. 
        
        For $\abs{\alpha}\ge 1$, the choice 
        $(F,G)=(W_\mathrm{log}, W_\mathrm{obst})$
        is not allowed since $\alpha D(g_1) \not\subset D(f_1)$ and thus, \ref{S3} is not fulfilled.

        Obviously, \ref{S4} and \ref{S6} are not fulfilled for any choice of $\alpha\in\R$.
        However, \ref{S5} is fulfilled since $F=W_\mathrm{log}$.

        \item The choices $(F,G) = (W_\mathrm{obst},W_\mathrm{log})$ and         $(F,G) = (W_\mathrm{obst},W_\mathrm{obst})$ satisfy \ref{S1}-\ref{S3} 
        if and only if $\abs{\alpha} \le 1$.

        For $\abs{\alpha}\le 1$, we have 
        $$\alpha D(g_1) \subset \big[-\abs{\alpha}, \abs{\alpha}\big] \subset [-1,1] = D(f_1),$$
        and the domination property \eqref{domination} holds since $f_1^\circ \equiv 0$ on $\alpha D(g_1)$. 
        
        For $\abs{\alpha}> 1$, the choices 
        $(F,G) = (W_\mathrm{obst},W_\mathrm{log})$ and         
        $(F,G) = (W_\mathrm{obst},W_\mathrm{obst})$
        are not allowed since $\alpha D(g_1) \not\subset D(f_1)$ and thus, \ref{S3} is not fulfilled.

        Obviously, \ref{S4} and \ref{S5} are not fulfilled for any choice of $\alpha\in\R$. Moreover, \ref{S6} is fulfilled if $G=W_\mathrm{log}$.
    \end{enumerate}
\end{remark}

\subsection{Preliminaries}

\begin{enumerate}[label=\textnormal{\bfseries(P\arabic*)}]
    \item For any real numbers $s\geq 0$ and $p\in[1,\infty]$, we set
    \begin{align*}
        \mathcal{L}^p \coloneqq L^p(\Om)\times L^p(\Ga), \quad\text{and}\quad \mathcal{W}^{s,p}\coloneqq W^{s,p}(\Om)\times W^{s,p}(\Ga),
    \end{align*}
    provided that $\Gamma$ is sufficiently regular. We abbreviate $\mathcal{H}^s \coloneqq \mathcal{W}^{s,2}$ and identify $\mathcal{L}^2$ with $\mathcal{H}^0$. Note that $\mathcal{H}^s$ is a Hilbert space with respect to the inner product
    \begin{align*}
        \bigscp{\scp{\phi}{\psi}}{\scp{\zeta}{\xi}}_{\mathcal{H}^s} \coloneqq \scp{\phi}{\zeta}_{H^s(\Om)} + \scp{\psi}{\xi}_{H^s(\Ga)} \quad\text{for all } \scp{\phi}{\psi}, \scp{\zeta}{\xi}\in\mathcal{H}^s
    \end{align*}
    and its induced norm $\norm{\cdot}_{\mathcal{H}^s} \coloneqq \scp{\cdot}{\cdot}_{\mathcal{H}^s}^{1/2}$. We recall that the duality pairing can be expressed as
    \begin{align*}
        \bigang{\scp{\phi}{\psi}}{\scp{\zeta}{\xi}}_{\mathcal{H}^s} = \scp{\phi}{\zeta}_{L^2(\Om)} + \scp{\psi}{\xi}_{L^2(\Ga)}
    \end{align*}
    for all $\scp{\zeta}{\xi}\in \mathcal{H}^s$ if $\scp{\phi}{\psi}\in \mathcal{L}^2$.
    
    \item Let $L\in[0,\infty]$ and $\beta\in\R$ be real numbers. We introduce the subspace
    \begin{align*}
        \mathcal{H}_{L,\beta}^1 \coloneqq
        \begin{cases}
            \mathcal{H}^1, &\text{if } L \in (0,\infty] , \\
             \{(\phi,\psi)\in\mathcal{H}^1 : \phi = \beta\psi \text{ a.e.~on } \Ga\}, &\text{if } L=0,
        \end{cases}
    \end{align*}
    endowed with the inner product $\scp{\cdot}{\cdot}_{\mathcal{H}_{L,\beta}^1} \coloneqq \scp{\cdot}{\cdot}_{\mathcal{H}^1}$ and its induced norm. The space $\mathcal{H}_{L,\beta}^1$ is a Hilbert space. Moreover, we define the product
    \begin{align*}
        \ang{\scp{\phi}{\psi}}{\scp{\zeta}{\xi}}_{\mathcal{H}_{L,\beta}^1} \coloneqq \scp{\phi}{\zeta}_{L^2(\Om)} + \scp{\psi}{\xi}_{L^2(\Ga)}
    \end{align*}
    for all $\scp{\phi}{\psi}, \scp{\zeta}{\xi}\in\mathcal{L}^2$. By means of the Riesz representation theorem, this product can be extended to a duality pairing on $(\mathcal{H}_{L,\beta}^1)^\prime\times\mathcal{H}_{L,\beta}^1$, which will also be denoted as $\ang{\cdot}{\cdot}_{\mathcal{H}_{L,\beta}^1}$.
    
    \item Let $L\in[0,\infty]$ and $\beta\in\R$ be real numbers. We define for $\scp{\phi}{\psi}\in(\mathcal{H}^1_{L,\beta})^\prime$ the generalized bulk-surface mean
    \begin{align*}
        \mean{\phi}{\psi} \coloneqq \frac{\ang{\scp{\phi}{\psi}}{\scp{\beta}{1}}_{\mathcal{H}^1_{L,\beta}}}{\beta^2\abs{\Om} + \abs{\Ga}},
    \end{align*}
    which reduces to
    \begin{align*}
        \mean{\phi}{\psi} = \frac{\beta\abs{\Om}\meano{\phi} + \abs{\Ga}\meang{\psi}}{\beta^2\abs{\Om} + \abs{\Ga}}
    \end{align*}
    if $\phi\in L^2(\Om)$ and $\psi\in L^2(\Ga)$. We then define the closed linear subspaces
    \begin{align*}
        \mathcal{V}_{L,\beta}^1 &\coloneqq \begin{cases} 
        \{\scp{\phi}{\psi}\in\mathcal{H}^1_{L,\beta} : \mean{\phi}{\psi} = 0 \}, &\text{if~} L\in[0,\infty), \\
        \{\scp{\phi}{\psi}\in\mathcal{H}^1: \meano{\phi} = \meang{\psi} = 0 \}, &\text{if~}L=\infty.
        \end{cases} 
    \end{align*}
    Note that these subspaces are Hilbert spaces with respect to the inner product $\scp{\cdot}{\cdot}_{\mathcal{H}^1}$.
    
    \item Let $L\in[0,\infty]$ and $\beta\in\R$ be real numbers. We set
    \begin{align*}
        \sigma(L) \coloneqq
        \begin{cases}
            L^{-1}, &\text{if } L\in(0,\infty), \\
            0, &\text{if } L\in\{0,\infty\},
        \end{cases}
    \end{align*}
    and we define a bilinear form on $\mathcal{H}^1\times\mathcal{H}^1$ by
    \begin{align*}
         \bigscp{\scp{\phi}{\psi}}{\scp{\zeta}{\xi}}_{L,\beta} \coloneqq &\intO\Grad\phi\cdot\Grad\zeta \dx + \intG\Gradg\psi\cdot\Gradg\xi \dG \\  
         &\quad + \sigma(L)\intG (\beta\psi-\phi)(\beta\xi-\zeta)\dG
    \end{align*}
    for all $ \scp{\phi}{\psi}, \scp{\zeta}{\xi}\in\mathcal{H}^1$. Moreover, we set 
    \begin{align*}
        \norm{\scp{\phi}{\psi}}_{L,\beta} \coloneqq \bigscp{\scp{\phi}{\psi}}{\scp{\phi}{\psi}}_{L,\beta}^{1/2}
    \end{align*}
    for all $\scp{\phi}{\psi}\in\mathcal{H}^1$. The bilinear form $\scp{\cdot}{\cdot}_{L,\beta}$ defines an inner product on $\mathcal{V}^1_{L,\beta}$, and $\norm{\cdot}_{L,\beta}$ defines a norm on $\mathcal{V}^1_{L,\beta}$, that is equivalent to the norm $\norm{\cdot}_{\mathcal{H}^1}$ (see \cite[Corollary~A.2]{Knopf2021}). The space $\big(\mathcal{V}^1_{L,\beta}, \scp{\cdot}{\cdot}_{L,\beta}, \norm{\cdot}_{L,\beta}\big)$ is a Hilbert space.
    
    \item \label{PRELIM:bulk-surface-elliptic} For any $L\in[0,\infty]$ and $\beta\in\R$, we define the spaces
    \begin{align*}
        \mathcal{V}_{L,\beta}^{-1} \coloneqq \begin{cases} 
        \{\scp{\phi}{\psi}\in(\mathcal{H}^1_{L,\beta})^\prime : \mean{\phi}{\psi} = 0 \}, &\text{if~} L\in[0,\infty), \\
        \{\scp{\phi}{\psi}\in(\mathcal{H}^1)^\prime: \meano{\phi} = \meang{\psi} = 0 \}, &\text{if~}L=\infty.
        \end{cases}
    \end{align*}
    Using the Lax--Milgram theorem, one can show that for any $\scp{\phi}{\psi}\in\mathcal{V}_{L,\beta}^{-1}$, there exists a unique weak solution $\mathcal{S}_{L,\beta}(\phi,\psi) = \bigscp{\mathcal{S}_{L,\beta}^\Om(\phi,\psi)}{\mathcal{S}_{L,\beta}^\Ga(\phi,\psi)}\in\mathcal{V}^1_{L,\beta}$ to the elliptic problem with bulk-surface coupling
    \begin{alignat}{3}
        -\Lap\mathcal{S}_{L,\beta}^\Om &= - \phi \qquad &&\text{in } \Om, \\
        -\Lapg\mathcal{S}_{L,\beta}^\Ga + \beta\deln\mathcal{S}_{L,\beta}^\Om &= -\psi &&\text{on } \Ga, \\
        L\deln\mathcal{S}_{L,\beta}^\Om &= \beta\mathcal{S}_{L,\beta}^\Ga - \mathcal{S}_{L,\beta}^\Om \qquad&&\text{on } \Ga.
    \end{alignat}
    This means that $\mathcal{S}_{L,\beta}(\phi,\psi)$ satisfies the weak formulation
    \begin{align*}
        \bigscp{S_{L,\beta}(\phi,\psi)}{\scp{\zeta}{\xi}}_{L,\beta} = - \ang{\scp{\phi}{\psi}}{\scp{\zeta}{\xi}}_{\mathcal{H}^1_{L,\beta}}
    \end{align*}
    for all test functions $\scp{\xi}{\zeta}\in\mathcal{H}^1_{L,\beta}$. Consequently, we have
    \begin{align*}
        \norm{\mathcal{S}_{L,\beta}(\phi,\psi)}_{L,\beta} \leq \norm{\scp{\phi}{\psi}}_{(\mathcal{H}^1_{L,\beta})^\prime}
    \end{align*}
    for all $\scp{\phi}{\psi}\in\mathcal{V}^{-1}_{L,\beta}$. Thus, we can define the solution operator
    \begin{align*}
        \mathcal{S}_{L,\beta}:\mathcal{V}_{L,\beta}^{-1}\rightarrow\mathcal{V}^1_{L,\beta}, \quad \scp{\phi}{\psi}\mapsto\mathcal{S}_{L,\beta}(\phi,\psi) = \bigscp{\mathcal{S}_{L,\beta}^\Om(\phi,\psi)}{\mathcal{S}_{L,\beta}^\Ga(\phi,\psi)},
    \end{align*}
    as well as an inner product and its induced norm on $\mathcal{V}^{-1}_{L,\beta}$ by
    \begin{align*}
        \bigscp{\scp{\phi}{\psi}}{\scp{\zeta}{\xi}}_{L,\beta,\ast} &\coloneqq \bigscp{\mathcal{S}_{L,\beta}(\phi,\psi)}{\mathcal{S}_{L,\beta}(\zeta,\xi)}_{L,\beta}, \\
        \norm{\scp{\phi}{\psi}}_{L,\beta,\ast} &\coloneqq \bigscp{\scp{\phi}{\psi}}{\scp{\phi}{\psi}}_{L,\beta,\ast}^{1/2},
    \end{align*}
    for $\scp{\phi}{\psi}, \scp{\zeta}{\xi}\in\mathcal{V}_{L,\beta}^{-1}$. This norm is equivalent to the norm $\norm{\cdot}_{(\mathcal{H}^1_{L,\beta})^\prime}$ on $\mathcal{V}_{L,\beta}^{-1}$. For the case $L\in (0,\infty)$, we refer the reader to \cite[Theorem 3.3 and Corollary 3.5]{Knopf2021} for a proof of these statements. In the other cases, the results can be proven analogously.

    \item \label{PRELIM:POINCINEQ} We further recall the following \textit{bulk-surface Poincar\'{e} inequality}, which has been established in \cite[Lemma A.1]{Knopf2021}: \\[0.3em]
    Let $K\in[0,\infty)$ and $\alpha,\beta\in\mathbb{R}$ with $\alpha\beta\abs{\Om} + \abs{\Ga} \neq 0$ be arbitrary. Then there exists a constant $C_P >0$ depending only on $K, \alpha, \beta$ and $\Omega$ such that
    \begin{align*}
        \norm{\scp{\phi}{\psi}}_{\mathcal{L}^2} \leq C_P \norm{\scp{\phi}{\psi}}_{K,\alpha}
    \end{align*}
    for all pairs $\scp{\phi}{\psi}\in\mathcal{H}^1_{K,\alpha}$ satisfying $\mean{\phi}{\psi} = 0$. 

\end{enumerate}

\section{Main results}
\label{SECT:MAINRESULTS}

As mentioned in \ref{ASSUMP:2}, we set $\epsilon = \epsilon_\Ga = \kappa = 1$. This does not mean any loss of generality as the exact values of $\epsilon$, $\epsilon_\Ga$, and $\kappa$ do not have any impact on the mathematical analysis (as long as they are positive). With this choice, the system \eqref{CH} can be restated as follows:%
\begin{subequations}\label{EQ:SYSTEM}
    \begin{align}
        \label{EQ:SYSTEM:1}
        &\delt\phi + \Div(\phi\boldsymbol{v}) = \Div(m_\Om(\phi)\Grad\mu) && \text{in} \ Q, \\
        \label{EQ:SYSTEM:2}
        &\mu = -\Lap\phi + F'(\phi)   && \text{in} \ Q, \\
        \label{EQ:SYSTEM:3}
        &\delt\psi + \Divg(\psi\boldsymbol{w}) = \Divg(m_\Ga(\psi)\Gradg\theta) - \beta m_\Om(\phi)\deln\mu && \text{on} \ \Sigma, \\
        \label{EQ:SYSTEM:4}
        &\theta = - \Lapg\psi + G'(\psi) + \alpha\deln\phi && \text{on} \ \Sigma, \\
        \label{EQ:SYSTEM:5}
        &\begin{cases} K\deln\phi = \alpha\psi - \phi &\text{if} \ K\in [0,\infty), \\
        \deln\phi = 0 &\text{if} \ K = \infty
        \end{cases} && \text{on} \ \Sigma, \\
        \label{EQ:SYSTEM:6}
        &\begin{cases} 
        L m_\Om(\phi)\deln\mu = \beta\theta - \mu &\text{if} \  L\in[0,\infty), \\
        m_\Om(\phi)\deln\mu = 0 &\text{if} \ L=\infty
        \end{cases} &&\text{on} \ \Sigma, \\
        \label{EQ:SYSTEM:7}
        &\phi\vert_{t=0} = \phi_0 &&\text{in} \ \Om, \\
        \label{EQ:SYSTEM:8}
        &\psi\vert_{t=0} = \psi_0 &&\text{on} \ \Ga.
    \end{align}
\end{subequations}

The total energy associated with this system reads as
\begin{align}\label{energy:K}
    \begin{split}
        E_K(\phi,\psi) &= \intO\frac{1}{2} \abs{\Grad\phi}^2 + F(\phi) \dx + \intG\frac{1}{2} \abs{\Gradg\psi}^2 + G(\psi) \dG \\
        &\quad + \sigma(K) \intG \frac{1}{2} \abs{\alpha\psi - \phi}^2 \dG,
    \end{split}
\end{align}
for $K\in[0,\infty]$.

\subsection{Weak solutions for possibly singular potentials}

The notion of a weak solution to system \eqref{EQ:SYSTEM} is defined as follows:

\begin{definition}[Weak solutions of system \eqref{EQ:SYSTEM}]\label{DEF:SING:WS}
    Suppose that the assumptions \ref{ASSUMP:1}-\ref{ASSUMP:MOBILITY} and \ref{S1}-\ref{S2} hold. Let $K,L\in[0,\infty]$, $\boldsymbol{v}\in L^2(0,T;\mathbf{L}^3_{\Div}(\Om))$ and $\boldsymbol{w}\in L^2(0,T;\mathbf{L}_\tau^{2+\omega}(\Ga))$ for some $\omega > 0$ be given velocity fields, and let $\scp{\phi_0}{\psi_0}\in\mathcal{H}^1_{K,\alpha}$ be an arbitrary initial datum satisfying
    \begin{subequations}\label{cond:init}
    \begin{align}\label{cond:init:int}
        F_1(\phi_0)\in L^1(\Om), \quad G_1(\psi_0)\in L^1(\Ga).
    \end{align}
    Furthermore, in the case $L\in[0,\infty)$ we assume that
    \begin{align}\label{cond:init:mean:L}
        \beta\,\mean{\phi_0}{\psi_0}\in\textup{int}(D(f_1)), \quad \mean{\phi_0}{\psi_0}\in\textup{int}(D(g_1)),
    \end{align}
    while in the case $L=\infty$ we assume that
    \begin{align}\label{cond:init:mean:inf}
        \meano{\phi_0}\in\textup{int}(D(f_1)), \quad \meang{\psi_0}\in\textup{int}(D(g_1)).
    \end{align}
    \end{subequations}
    The sextuplet $(\phi,\psi,\mu,\theta,\xi,\xi_\Ga)$ is called a weak solution of the system \eqref{EQ:SYSTEM} on $[0,T]$ if the following properties hold:
    \begin{enumerate}[label=\textnormal{(\roman*)}, ref=\thetheorem(\roman*), topsep=0ex]
        \item The functions $\phi, \psi, \mu, \theta, \xi$ and $\xi_\Ga$ have the regularities
        \begin{subequations}
            \begin{align}
                &\scp{\phi}{\psi} \in C([0,T];\mathcal{L}^2)\cap H^1(0,T;(\mathcal{H}_{L,\beta}^1)^\prime)\cap L^\infty(0,T;\mathcal{H}_{K,\alpha}^1), \label{REGPP:SING}\\
                &\scp{\mu}{\theta}\in L^2(0,T;\mathcal{H}_{L,\beta}^1) \label{REGMT:SING}, \\
                &\scp{\xi}{\xi_\Ga}\in L^2(0,T;\mathcal{L}^2) \label{REGLC:SING},
            \end{align}
        \end{subequations}
        and it holds $\phi(x,t) \in D(f_1)$ for almost all $(x,t)\in Q$ and $\psi(z,t) \in D(g_1)$ for almost all $(z,t) \in \Sigma$.
    \item \label{DEF:SING:WS:IC} The initial conditions are satisfied in the following sense:
    \begin{align}
        \phi\vert_{t=0} = \phi_0 \quad\text{a.e.~in } \Omega, \quad\text{and} \quad\psi\vert_{t=0} = \psi_0 \quad\text{a.e.~on }\Gamma.
    \end{align}
    \item \label{DEF:SING:WS:WF} The variational formulation
    \begin{subequations}\label{SING:WF}
        \begin{align}
        &\begin{aligned}
            &\ang{\scp{\delt\phi}{\delt\psi}}{\scp{\zeta}{\zeta_\Ga}}_{\mathcal{H}_{L,\beta}^1} - \intO \phi\boldsymbol{v}\cdot\Grad\zeta\dx - \intG \psi\boldsymbol{w}\cdot\Gradg\zeta_\Ga\dG 
            \\
            &= - \intO m_\Om(\phi)\Grad\mu\cdot\Grad\zeta\dx - \intG  m_\Ga(\psi)\Gradg\theta\cdot\Gradg\zeta_\Ga\dG \label{WF:PP:SING}\\
            &\quad - \sigma(L)\intG(\beta\theta-\mu)(\beta\zeta_\Ga - \zeta)\dG, 
        \end{aligned}
            \\
        &\begin{aligned}
            &\intO \mu\,\eta\dx + \intG\theta\,\eta_\Ga\dG
            \\
            &=  \intO\Grad\phi\cdot\Grad\eta + \xi\eta + F_2^\prime(\phi)\eta\dx
            + \intG \Gradg\psi\cdot\Gradg\eta_\Ga + \xi_\Ga\eta_\Ga 
            + G_2^\prime(\psi)\eta_\Ga\dG\label{WF:MT:SING}
            \\
            &\quad + \sigma(K)\intG(\alpha\psi-\phi)(\alpha\eta_\Ga - \eta) \dG, 
        \end{aligned}
        \end{align}
    \end{subequations}
    holds a.e. on $[0,T]$ for all $\scp{\zeta}{\zeta_\Ga}\in\mathcal{H}_{L,\beta}^1, \scp{\eta}{\eta_\Ga}\in\mathcal{H}_{K,\alpha}^1$, where
    \begin{align}
        \xi\in f_1(\phi) \quad\text{a.e.~in~} Q,\qquad\xi_\Ga\in g_1(\psi) \quad\text{a.e.~on~} \Sigma.
    \end{align}
    \item \label{DEF:SING:WS:MCL} The functions $\phi$ and $\psi$ satisfy the mass conservation law
    \begin{align}\label{MCL:SING}
        \begin{dcases}
            \beta\intO \phi(t)\dx + \intG \psi(t)\dG = \beta\intO \phi_0 \dx + \intG \psi_0\dG, &\textnormal{if } L\in[0,\infty), \\
            \intO\phi(t)\dx = \intO\phi_0\dx \quad\textnormal{and}\quad \intG\psi(t)\dG = \intG\psi_0\dG, &\textnormal{if } L = \infty
        \end{dcases}
    \end{align}
    for all $t\in[0,T]$.
    \item \label{DEF:SING:WS:WEDL} The energy inequality
    \begin{align}\label{WEDL:SING}
        \begin{split}
            &E_K(\phi(t),\psi(t)) + \int_0^t\intO m_\Om(\phi)\abs{\Grad\mu}^2\dxs + \int_0^t\intG m_\Ga(\psi)\abs{\Gradg\theta}^2\dGs \\
            &\quad + \sigma(L) \int_0^t\intG (\beta\theta-\mu)^2\dGs \\
            &\quad - \int_0^t\intO\phi\boldsymbol{v}\cdot\Grad\mu\dxs - \int_0^t\intG \psi\boldsymbol{w}\cdot\Gradg\theta\dGs 
            \\[1ex]
            &\leq E_K(\phi_0,\psi_0)
        \end{split}
    \end{align}
    holds for all $t\in[0,T]$.
    \end{enumerate}
\end{definition}

\medskip

\begin{remark}
    Assume that the potentials $F$ and $G$ are more regular, say $F\in C^1(D(f_1))$ and $G\in C^1(D(g_1))$. In addition, suppose that $f_1$ and $g_1$ are single-valued in their respective domains. Then, from the theory of subdifferentials, it is well known that $\xi = F_1^\prime(\phi)$ and $\xi_\Ga = G_1^\prime(\psi)$. Therefore, the weak formulation \eqref{SING:WF} can be rewritten as
    \begin{subequations}\label{REG:WF}
        \begin{align}
            &\begin{aligned}
            &\ang{\scp{\delt\phi}{\delt\psi}}{\scp{\zeta}{\zeta_\Ga}}_{\mathcal{H}_{L,\beta}^1} - \intO \phi\boldsymbol{v}\cdot\Grad\zeta\dx - \intG \psi\boldsymbol{w}\cdot\Gradg\zeta_\Ga\dG \\
            &\quad= - \intO m_\Om(\phi)\Grad\mu\cdot\Grad\zeta\dx - \intG  m_\Ga(\psi)\Gradg\theta\cdot\Gradg\zeta_\Ga\dG \label{REG:WF:PP}\\
            &\qquad - \sigma(L)\intG(\beta\theta-\mu)(\beta\zeta_\Ga - \zeta)\dG, 
            \end{aligned}
            \\
            &\begin{aligned}
            &\intO \mu\,\eta\dx + \intG\theta\,\eta_\Ga\dG 
            \\
            &\quad = \intO\Grad\phi\cdot\Grad\eta + F'(\phi)\eta \dx + \intG\Gradg\psi\cdot\Gradg\eta_\Ga + G'(\psi)\eta_\Ga \dG \label{REG:WF:MT}\\
            &\qquad + \sigma(K)\intG(\alpha\psi-\phi)(\alpha\eta_\Ga - \eta) \dG, 
            \end{aligned}
        \end{align}
    \end{subequations}
    a.e. on $[0,T]$ for all $\scp{\zeta}{\zeta_\Ga}\in\mathcal{H}_{L,\beta}^1, \scp{\eta}{\eta_\Ga}\in\mathcal{H}_{K,\alpha}^1$, which coincides with the weak formulation for regular potentials stated in \cite{Knopf2024}.
\end{remark}

\medskip

\begin{remark}
    The difference in the condition of the initial data \eqref{cond:init:mean:L} and \eqref{cond:init:mean:inf} in the cases $L\in[0,\infty)$ and $L=\infty$ reflects the difference in the mass conservation law \eqref{MCL:SING} depending on the value of $L\in[0,\infty]$.
\end{remark}

\medskip

We can now state our first main results regarding the existence of weak solutions for singular potentials.

\begin{theorem}\textnormal{(Existence of weak solutions of \eqref{EQ:SYSTEM})}\label{THEOREM:EOWS}
    Suppose that the assumptions \ref{ASSUMP:1}-\ref{ASSUMP:MOBILITY} and \ref{S1}-\ref{S2} hold. Let $K,L\in[0,\infty]$, let $\scp{\phi_0}{\psi_0}\in\mathcal{H}^1_{K,\alpha}$ be an arbitrary initial datum satisfying \eqref{cond:init}, and let $\boldsymbol{v}\in L^2(0,T;\mathbf{L}^3_\Div(\Om))$ and $\boldsymbol{w}\in L^2(0,T;\mathbf{L}_\tau^{2+\omega}(\Ga))$ for some $\omega > 0$ be given velocity fields. If $K\in [0,\infty)$, we further suppose that \ref{S3} holds, and if $K=0$, we additionally assume that the domain $\Om$ is of class $C^2$. 
    
    Then, the Cahn--Hilliard system \eqref{EQ:SYSTEM} admits at least one weak solution $(\phi,\psi,\mu,\theta,\xi,\xi_\Ga)$ in the sense of Definition~\ref{DEF:SING:WS}. In all cases, if the domain $\Om$ is at least of class $C^2$, it holds that
    \begin{align}
        \scp{\phi}{\psi}\in L^2(0,T;\mathcal{H}^2),
    \end{align}
    and the equations
    \begin{align*}
        &\mu = -\Lap\phi + \xi + F^\prime_2(\phi) &&\text{a.e.~in } Q, \\
        &\theta = -\Lapg\psi + \xi_\Ga + G^\prime_2(\psi) + \alpha\deln\phi &&\text{a.e.~on } \Sigma, \\
        & \begin{cases} 
            K\deln\phi = \alpha\psi - \phi &\text{if} \ K\in [0,\infty), \\
            \deln\phi = 0 &\text{if} \ K = \infty
        \end{cases} &&  \text{a.e.~on } \Sigma
    \end{align*}
    are fulfilled in the strong sense.
    If additionally \ref{S4} holds, we even have
    \begin{align}
        \label{REG:PP:W26}
        \scp{\phi}{\psi}\in L^2(0,T;\mathcal{W}^{2,6}),\quad
        \scp{F^\prime(\phi)}{G^\prime(\psi)}&\in L^2(0,T;\mathcal{L}^6).
    \end{align}
\end{theorem}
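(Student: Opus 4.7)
Following the strategy indicated in the introduction, the plan is to regularize the convex parts of the potentials via a Moreau--Yosida approximation, invoke the existence theory for regular potentials from \cite{Knopf2024} to obtain approximate solutions, and then pass to the limit. For $\varepsilon\in(0,1)$, let $F_{1,\varepsilon},G_{1,\varepsilon}$ denote the Moreau--Yosida regularizations of $F_1,G_1$ with Lipschitz continuous derivatives $f_{1,\varepsilon},g_{1,\varepsilon}$, and set $F_\varepsilon := F_{1,\varepsilon}+F_2$ and $G_\varepsilon := G_{1,\varepsilon}+G_2$. These are regular potentials in the sense of \cite{Knopf2024}, so that work yields weak solutions $(\phi_\varepsilon,\psi_\varepsilon,\mu_\varepsilon,\theta_\varepsilon)$ to the regularized system satisfying the weak formulation \eqref{REG:WF} with $F_\varepsilon,G_\varepsilon$ in place of $F,G$, together with the energy identity as an equality.

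Next I derive $\varepsilon$-uniform estimates. The energy identity, the boundedness and uniform positivity of the mobilities (see \ref{ASSUMP:MOBILITY}), the assumptions on $\boldsymbol{v},\boldsymbol{w}$, and Gronwall's lemma yield uniform bounds for $(\phi_\varepsilon,\psi_\varepsilon)$ in $L^\infty(0,T;\mathcal{H}^1_{K,\alpha})$, for the zero-mean parts of $(\mu_\varepsilon,\theta_\varepsilon)$ in $L^2(0,T;\mathcal{V}^1_{L,\beta})$, and for $F_{1,\varepsilon}(\phi_\varepsilon),G_{1,\varepsilon}(\psi_\varepsilon)$ in $L^\infty(0,T;L^1)$. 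To control the generalized bulk--surface mean of $(\mu_\varepsilon,\theta_\varepsilon)$ I test the regularized version of \eqref{WF:MT:SING} with $(\phi_\varepsilon-m_\ast,\psi_\varepsilon-n_\ast)\in\mathcal{H}^1_{K,\alpha}$, choosing constants $m_\ast,n_\ast$ strictly in the interior of $D(f_1)$ and $D(g_1)$ compatibly with \eqref{cond:init:mean:L} or \eqref{cond:init:mean:inf}. The monotonicity of $f_{1,\varepsilon},g_{1,\varepsilon}$ together with the domination inequality \eqref{domination} from \ref{S3} then produces uniform $L^1$ bounds on $f_{1,\varepsilon}(\phi_\varepsilon)$ and $g_{1,\varepsilon}(\psi_\varepsilon)$, hence on the means of $\mu_\varepsilon,\theta_\varepsilon$. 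Bounds on $\partial_t(\phi_\varepsilon,\psi_\varepsilon)$ in $L^2(0,T;(\mathcal{H}^1_{L,\beta})')$ follow from \eqref{WF:PP:SING}.

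Weak-$\ast$ compactness then provides convergent subsequences $(\mu_\varepsilon,\theta_\varepsilon)\rightharpoonup(\mu,\theta)$ in $L^2(0,T;\mathcal{H}^1_{L,\beta})$, while Aubin--Lions delivers strong convergence $(\phi_\varepsilon,\psi_\varepsilon)\to(\phi,\psi)$ in $C([0,T];\mathcal{L}^2)$ and a.e. The Lipschitz parts $F_2',G_2'$ pass by continuity, and the advection terms pass by strong $L^2$ convergence of the phases combined with the hypotheses on $\boldsymbol{v},\boldsymbol{w}$. The weak $L^1$-limits $f_{1,\varepsilon}(\phi_\varepsilon)\rightharpoonup\xi$ and $g_{1,\varepsilon}(\psi_\varepsilon)\rightharpoonup\xi_\Gamma$ are identified as selections of $f_1(\phi)$ and $g_1(\psi)$ by the standard maximal-monotone-graph argument, using $f_{1,\varepsilon}(\phi_\varepsilon)=f_1(J_\varepsilon\phi_\varepsilon)$ with $J_\varepsilon\phi_\varepsilon\to\phi$ a.e. (where $J_\varepsilon$ is the resolvent of $f_1$). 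The mass conservation \eqref{MCL:SING} follows by testing with constant test functions, and the energy inequality \eqref{WEDL:SING} results from weak lower semicontinuity and Fatou's lemma (the identity turns into an inequality in the limit).

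For the additional regularity, the weak formulation \eqref{WF:MT:SING} identifies $(\phi,\psi)$ with the weak solution of a bulk--surface elliptic system with right-hand side in $L^2(0,T;\mathcal{L}^2)$ and boundary coupling \eqref{EQ:SYSTEM:5}; if $\Omega\in C^2$, Proposition \ref{Prop:Appendix} (together with \cite[Theorem 3.3]{Knopf2021a} in the case $K=0$) yields $(\phi,\psi)\in L^2(0,T;\mathcal{H}^2)$ and the three displayed equations in the strong sense. Under \ref{S4}, the phases are confined to $[-1,1]$ a.e., so $F_2'(\phi),G_2'(\psi)\in L^\infty$; testing \eqref{WF:MT:SING} with $(\abs{F_1'(\phi)}^4 F_1'(\phi),\abs{G_1'(\psi)}^4 G_1'(\psi))$---an admissible test function in $\mathcal{H}^1_{K,\alpha}$ thanks to \ref{S4} and the compatibility enforced by \ref{S3}---and using the strong convexity of $F_1,G_1$ delivers $(F_1'(\phi),G_1'(\psi))\in L^\infty(0,T;\mathcal{L}^6)$; reapplying Proposition \ref{Prop:Appendix} then yields \eqref{REG:PP:W26}. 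The main obstacle throughout is the $\varepsilon$-uniform $L^1$ control of $f_{1,\varepsilon}(\phi_\varepsilon)$ and $g_{1,\varepsilon}(\psi_\varepsilon)$ in the coupled regime $K<\infty$, where bulk and surface singularities interact through \eqref{EQ:SYSTEM:5}; this is precisely the role of the domination assumption \ref{S3}.
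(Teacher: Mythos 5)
Your overall architecture (Yosida regularization, existence for regular potentials from \cite{Knopf2024}, $\ep$-uniform energy estimates, Aubin--Lions compactness, identification of the limit via maximal monotonicity, and elliptic regularity for the bulk--surface system) matches the paper's proof. However, there is a genuine gap at the heart of the argument: you stop at $\ep$-uniform $L^1$ bounds on $f_{1,\ep}(\phi_\ep)$ and $g_{1,\ep}(\psi_\ep)$ and then claim to extract "weak $L^1$-limits" $\xi$, $\xi_\Gamma$. A uniform $L^1$ bound does not give weak sequential compactness in $L^1$ (only weak-$\ast$ compactness in the space of measures, absent equi-integrability), and moreover Definition~\ref{DEF:SING:WS} requires $\scp{\xi}{\xi_\Ga}\in L^2(0,T;\mathcal{L}^2)$. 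The paper explicitly notes after its $L^1$ estimate that it "is not sufficient" and devotes two full steps to upgrading it to $L^2$ bounds: for $K\in(0,\infty]$ by testing \eqref{WF:MT:SING} with $\scp{f_{1,\ep}(\phi_\ep)}{g_{1,\ep}(\psi_\ep)}$ and invoking the domination property \eqref{domination-reg} together with monotonicity to absorb the Robin coupling term; and for $K=0$ by a separate, more delicate argument that multiplies the strong form of the elliptic equations, produces the extra boundary term $\intG \deln\phi_\ep\, f_{1,\ep}(\phi_\ep)\dG$, and controls $\norm{\deln\phi_\ep}_{L^2(\Ga)}$ via elliptic regularity, the trace theorem, and Ehrling's lemma. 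Your proposal does not address the $K=0$ difficulty at all (in that case even the $L^1$ bound carries an uncontrolled $\norm{\deln\phi_\ep}_{L^2(\Ga)}$ term), and it misplaces the role of \ref{S3}: the $L^1$ bound comes from the Miranville--Zelik inequality applied to the terms $\intO f_{1,\ep}(\phi_\ep)(\phi_\ep-\beta m_P)\dx$, while the domination property is what closes the $L^2$ estimate.

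A secondary issue: for \eqref{REG:PP:W26} you test with $\bigscp{\abs{F_1'(\phi)}^4F_1'(\phi)}{\abs{G_1'(\psi)}^4G_1'(\psi)}$, but this is not an admissible $\mathcal{H}^1_{K,\alpha}$ test function a priori --- you cannot assume $F_1'(\phi)$ has any integrability before proving it, which is circular. The paper instead uses the truncations $\phi_k=\sigma_k\circ\phi$, $\psi_k=\sigma_k\circ\psi$ and the test functions $\abs{F_1'(\phi_k)}^{r-2}F_1'(\phi_k)$ (which are Lipschitz images of $H^1$ functions, hence admissible), derives $k$-uniform $L^r$ bounds, and passes $k\to\infty$ by Fatou's lemma. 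You should incorporate this truncation, and in the case $K\in(0,\infty)$ also the sign/monotonicity manipulation of the Robin term using \ref{S3} and the Minkowski-type inequality, to make this step rigorous.
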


\medskip

Theorem~\ref{THEOREM:EOWS} is proved by approximating the singular potentials by means of a Moreau--Yosida approximation, which will be explained in more detail in Subsection~\ref{SUBSECT:YOS}. The proof of Theorem~\ref{THEOREM:EOWS} is then presented in Subsection~\ref{SUBSECT:CWS}.

\medskip

\begin{remark}\label{REM:REG:WEAK}
    Concerning the regularities stated in \eqref{REG:PP:W26}, we note the following.
    \begin{enumerate}[label=\textnormal{(\alph*)},topsep=0ex]
        \item We even get
        \begin{align*}
            G^\prime(\psi)\in L^2(0,T;L^s(\Ga))
            \qquad\text{~and~}\qquad \psi\in L^2(0,T;W^{2,s}(\Ga))
        \end{align*}
        for any $s\in[1,\infty)$, see Proposition~\ref{Prop:FG:Linf} and Corollary~\ref{Cor:pp:W2r}. 
        \item If the spatial dimension is two (i.e., $d=2$), we even get
        \begin{alignat*}{2}
            &F^\prime(\phi)\in L^2(0,T;L^r(\Om)), \qquad &&\phi\in L^2(0,T;W^{2,r}(\Om)), \\
            &G^\prime(\psi)\in L^2(0,T;L^r(\Ga)), \qquad &&\psi\in L^2(0,T;W^{2,r}(\Ga))
        \end{alignat*}
        for all $r\in [1,\infty)$, see Proposition~\ref{Prop:FG:Linf} and Corollary~\ref{Cor:pp:W2r}.
    \end{enumerate}
\end{remark}

\subsection{Uniqueness and continuous dependence on the initial data}

Our second result shows the continuous dependence of the phase-fields on the velocity fields and the initial data in the case of constant mobilities. 

\begin{theorem}\label{THEOREM:UNIQUE:SING}
    Suppose that the assumptions \ref{ASSUMP:1}-\ref{ASSUMP:MOBILITY} and \ref{S1}-\ref{S2} hold, and that the mobility functions $m_\Om$ and $m_\Ga$ are constant. Let $K, L\in[0,\infty]$, let $\scp{\phi_0^1}{\psi_0^1}$, $\scp{\phi_0^2}{\psi_0^2}\in\mathcal{H}^1_{K,\alpha}$ be two pairs of initial data, which satisfy
    \begin{align}\label{initial-data-mean-value}
        \begin{cases}
            \mean{\phi_0^1}{\psi_0^1} = \mean{\phi_0^2}{\psi_0^2} \quad&\text{if~} L\in[0,\infty), \\
            \meano{\phi_0^1} = \meano{\phi_0^2} \quad\text{and}\quad \meang{\psi_0^1} = \meang{\psi_0^2} &\text{if~} L=\infty,
        \end{cases}
    \end{align}
    as well as \eqref{cond:init}, and let $\boldsymbol{v}_1,\boldsymbol{v}_2\in L^2(0,T;\mathbf{L}^3_{\Div}(\Om))$, and $\boldsymbol{w}_1, \boldsymbol{w}_2 \in L^2(0,T;\mathbf{L}_\tau^{2+\omega}(\Ga))$ for some $\omega > 0$ be given bulk and surface velocity fields, respectively. Furthermore, suppose that $(\phi_1,\psi_1,\mu_1,\theta_1,\xi_1,\xi_{\Gamma 1})$ and $(\phi_2,\psi_2,\mu_2,\theta_2,\xi_2,\xi_{\Gamma 2})$ are weak solutions in the sense of Definition~\ref{DEF:SING:WS} corresponding to $(\phi_0^1, \psi_0^1, \boldsymbol{v}_1, \boldsymbol{w}_1)$ and $(\phi_0^2, \psi_0^2, \boldsymbol{v}_2, \boldsymbol{w}_2)$, respectively. Then, the continuous dependence estimate
    \begin{align}\label{EST:continuous-dependence:sing}
        \begin{split}
            &\bignorm{\bigscp{\phi_1(t) - \phi_2(t)}{\psi_1(t) - \psi_2(t)}}_{L,\beta,\ast}^2 + \int_0^t \bignorm{\bigscp{\phi_1(\tau) - \phi_2(\tau)}{\psi_1(\tau) - \psi_2(\tau)}}_{K,\alpha}^2 \dtau\\[0.4em]
            &\leq C\bignorm{\bigscp{\phi_0^1 - \phi_0^2}{\psi_0^1 - \psi_0^2}}_{L,\beta,\ast}^2 \exp\left(C\int_0^t \mathcal{F}(\tau)\dtau\right) \\
            &\quad + C\int_0^t \bignorm{\bigscp{\boldsymbol{v}_1(s) - \boldsymbol{v}_2(s)}{\boldsymbol{w}_1(s) - \boldsymbol{w}_2(s)}}_{\mathbf{L}^3(\Om)\times \mathbf{L}^{2+\omega}(\Ga)}^2 \exp\left(C\int_s^t \mathcal{F}(\tau)\dtau\right)\ds
        \end{split}
    \end{align}
    holds for almost all $t\in[0,T]$, where $\mathcal{F} \coloneqq \norm{\scp{\boldsymbol{v}_2}{\boldsymbol{w}_2}}_{\mathbf{L}^3(\Om)\times \mathbf{L}^{2+\omega}(\Ga)}^2,$
    and the constant $C>0$ depends only on $\Om$, the parameters of the system and the initial data.

    In particular, if $\scp{\phi_0^1}{\psi_0^1} = \scp{\phi_0^2}{\psi_0^2}$ a.e.~in $\Om\times\Ga$, $\boldsymbol{v}_1 = \boldsymbol{v}_2$ a.e.~in $Q$ and $\boldsymbol{w}_1 = \boldsymbol{w}_2$ a.e.~on $\Sigma$, \eqref{EST:continuous-dependence:sing} yields uniqueness of the phase-fields of the corresponding weak solution. Furthermore, the whole solution is unique if both the operators $f_1$ and $g_1$ are single-valued on their respective domains.
\end{theorem}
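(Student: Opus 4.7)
The plan is an energy estimate in the dual norm $\norm{\cdot}_{L,\beta,\ast}$ (up to equivalence) followed by Gronwall. Write $\phi := \phi_1-\phi_2$ and analogously $\psi,\mu,\theta,\xi,\xi_\Ga$. By the mass conservation law \eqref{MCL:SING} combined with the compatibility condition \eqref{initial-data-mean-value}, we have $\scp{\phi(t)}{\psi(t)}\in\mathcal{V}^{-1}_{L,\beta}$ for every $t\in[0,T]$. Since $m_\Om,m_\Ga$ are positive constants, I would work with a mobility-weighted solution operator $\tilde{\mathcal{S}}\colon\mathcal{V}^{-1}_{L,\beta}\to\mathcal{V}^1_{L,\beta}$, obtained by Lax--Milgram from the coercive bilinear form
\begin{align*}
    a\bigl(\scp{u}{u_\Ga},\scp{\zeta}{\zeta_\Ga}\bigr)
    &:= m_\Om\intO\Grad u\cdot\Grad\zeta\dx + m_\Ga\intG\Gradg u_\Ga\cdot\Gradg\zeta_\Ga\dG \\
    &\qquad + \sigma(L)\intG(\beta u_\Ga-u)(\beta\zeta_\Ga-\zeta)\dG.
\end{align*}
The induced dual norm $\norm{\scp{\phi}{\psi}}_\sharp := a\bigl(\tilde{\mathcal{S}}\scp{\phi}{\psi},\tilde{\mathcal{S}}\scp{\phi}{\psi}\bigr)^{1/2}$ is equivalent to $\norm{\cdot}_{L,\beta,\ast}$ on $\mathcal{V}^{-1}_{L,\beta}$.

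Testing the difference of \eqref{WF:PP:SING} with $\tilde{\mathcal{S}}\scp{\phi}{\psi}\in\mathcal{H}^1_{L,\beta}$ and the difference of \eqref{WF:MT:SING} with $\scp{\phi}{\psi}\in\mathcal{H}^1_{K,\alpha}$ (admissible because the constraint $\phi=\alpha\psi$ on $\Ga$ in the case $K=0$ is preserved under subtraction), the diffusion terms in the first equation collapse, via the defining property of $\tilde{\mathcal{S}}$ applied with test pair $\scp{\mu}{\theta}\in\mathcal{H}^1_{L,\beta}$, exactly to $-\intO\phi\mu\dx-\intG\psi\theta\dG$. Combined with the second identity, this gives
\begin{equation*}
    \tfrac{1}{2}\ddt\bignorm{\scp{\phi}{\psi}}_\sharp^2 + \bignorm{\scp{\phi}{\psi}}_{K,\alpha}^2 + \intO\xi\phi\dx + \intG\xi_\Ga\psi\dG = \textup{(convection)} + \textup{(Lipschitz)}.
\end{equation*}
The monotonicity in \ref{S1} gives $\intO\xi\phi\dx\ge 0$ and $\intG\xi_\Ga\psi\dG\ge 0$, so the singular contributions are simply dropped. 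The Lipschitz terms from \ref{S2} are controlled by $C(\norm{\phi}_{L^2(\Om)}^2+\norm{\psi}_{L^2(\Ga)}^2)$, and Ehrling's lemma on the chain $\mathcal{H}^1_{K,\alpha}\hookrightarrow\mathcal{L}^2\hookrightarrow\mathcal{V}^{-1}_{L,\beta}$ absorbs them into $\varepsilon\bignorm{\scp{\phi}{\psi}}_{K,\alpha}^2+C_\varepsilon\bignorm{\scp{\phi}{\psi}}_\sharp^2$.

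The convection terms are the delicate part. Splitting $\phi_1\boldsymbol{v}_1-\phi_2\boldsymbol{v}_2 = \phi\boldsymbol{v}_2+\phi_1(\boldsymbol{v}_1-\boldsymbol{v}_2)$ (and analogously on $\Ga$), Hölder together with $H^1(\Om)\hookrightarrow L^6(\Om)$ in 3D, $H^1(\Ga)\hookrightarrow L^q(\Ga)$ for every $q<\infty$ (as $\Ga$ is 2D), and the elliptic bound $\norm{\Grad\tilde{\mathcal{S}}^\Om}_{L^2(\Om)}+\norm{\Gradg\tilde{\mathcal{S}}^\Ga}_{L^2(\Ga)}\le C\bignorm{\scp{\phi}{\psi}}_\sharp$ bound the internal pieces by $\varepsilon\bignorm{\scp{\phi}{\psi}}_{K,\alpha}^2+C_\varepsilon(1+\mathcal{F}(t))\bignorm{\scp{\phi}{\psi}}_\sharp^2$ after Young; the surface partner exponent $q=2(2+\omega)/\omega<\infty$ is precisely what demands $\omega>0$. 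The external-forcing piece uses $\phi_1\in L^\infty(0,T;L^6(\Om))$ from \eqref{REGPP:SING} to produce $C\bignorm{\scp{\boldsymbol{v}_1-\boldsymbol{v}_2}{\boldsymbol{w}_1-\boldsymbol{w}_2}}_{L^3(\Om)\times L^{2+\omega}(\Ga)}^2+C\bignorm{\scp{\phi}{\psi}}_\sharp^2$. Absorbing the $\varepsilon$-terms on the left, Gronwall's lemma together with $\norm{\cdot}_\sharp\sim\norm{\cdot}_{L,\beta,\ast}$ delivers \eqref{EST:continuous-dependence:sing}. Uniqueness of $\scp{\phi}{\psi}$ is then immediate; setting $\phi_1=\phi_2$ and $\psi_1=\psi_2$, the difference of \eqref{WF:MT:SING} reduces to $\mu=\xi_1-\xi_2$ and $\theta=\xi_{\Ga 1}-\xi_{\Ga 2}$, which vanish exactly when $f_1,g_1$ are single-valued. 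The principal technical obstacle is the sharp Hölder/Sobolev bookkeeping in the convection estimates, ensuring that the Gronwall exponent depends only on $\scp{\boldsymbol{v}_2}{\boldsymbol{w}_2}$ (matching $\mathcal{F}$) and the external forcing is exactly $\norm{\scp{\boldsymbol{v}_1-\boldsymbol{v}_2}{\boldsymbol{w}_1-\boldsymbol{w}_2}}_{L^3(\Om)\times L^{2+\omega}(\Ga)}^2$.
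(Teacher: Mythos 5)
Your proposal is correct and follows essentially the same route as the paper: test the difference of \eqref{WF:PP:SING} with the (inverse elliptic) solution operator applied to the phase-field differences and the difference of \eqref{WF:MT:SING} with the phase-field differences themselves, drop the singular contributions $\intO\xi\phi\dx$ and $\intG\xi_\Ga\psi\dG$ by monotonicity of the graphs $f_1,g_1$, control the Lipschitz and convection terms via H\"older, Sobolev embeddings and Ehrling, and conclude with Gronwall; the final uniqueness argument for $(\mu,\theta)$ under single-valuedness also matches. The only cosmetic difference is that you keep the constant mobilities inside a weighted bilinear form, whereas the paper normalizes $m_\Om\equiv m_\Ga\equiv 1$ — these are equivalent.
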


\medskip

The proof of Theorem~\ref{THEOREM:UNIQUE:SING} is presented in Section~\ref{SECT:UNIQUENESS}.

\subsection{Strong solutions for singular potentials}

Our next main result is concerned with establishing higher regularity properties of weak solutions. Assuming that the mobility functions are constant and that the potentials as well as the velocity fields are more regular, we prove the existence of strong solutions.

\begin{theorem}\label{thm:highreg}
    In addition to the assumptions \ref{ASSUMP:1}-\ref{ASSUMP:MOBILITY} and \ref{S1}-\ref{S4}, we assume that the domain $\Om$ is at least of class $C^2$ and that the mobility functions $m_\Om$ and $m_\Ga$ are constant. Let $K,L\in[0,\infty]$ and let $\scp{\phi_0}{\psi_0}\in\mathcal{H}^1_{K,\alpha}$ be an arbitrary initial datum satisfying \eqref{cond:init}. Moreover, we suppose that the velocity fields satisfy 
    \begin{align*}
        \boldsymbol{v} &\in H^1(0,T;\mathbf{L}^{6/5}(\Om))\cap L^2(0,T;\mathbf{L}_\Div^3(\Om))\cap L^\infty(0,T;\mathbf{L}^2(\Om)),
        \\
        \boldsymbol{w}&\in H^1(0,T;\mathbf{L}^{1+\omega}(\Ga))\cap L^2(0,T;\mathbf{L}_\Div^{2}(\Ga))\cap L^\infty(0,T;\mathbf{L}^2(\Ga))
    \end{align*}
    for some $\omega > 0$.    
    We further assume that the following compatibility condition holds:
    \begin{enumerate}[label=\textnormal{\bfseries(C)},topsep=0ex]
        \item \label{cond:MT:0} There exists $\scp{\mu_0}{\theta_0}\in\mathcal{H}^1_{L,\beta}$ such that for all $\scp{\eta}{\eta_\Ga}\in\mathcal{H}^1_{K,\alpha}$, it holds
        \begin{align*}
        \begin{aligned}
            &\intO\mu_0\eta\dx + \intG\theta_0\eta_\Ga\dG 
            \\
            &= \intO\Grad\phi_0\cdot\Grad\eta + F^\prime(\phi_0)\eta\dx + \intG\Gradg\psi_0\cdot\Gradg\eta_\Ga + G^\prime(\psi_0)\eta_\Ga\dG 
            \\
            &\quad + \sigma(K)\intG(\alpha\psi_0 - \phi_0)(\alpha\eta_\Ga - \eta)\dG.
        \end{aligned}
        \end{align*}
    \end{enumerate}
    Let $(\phi,\psi,\mu,\theta)$ be the corresponding unique solution in the sense of Definition~\ref{DEF:SING:WS}. Then, $(\phi,\psi,\mu,\theta)$ has the following regularity:%
    \begin{subequations}\label{REG:STRG}
    \begin{align}
        \scp{\delt\phi}{\delt\psi}&\in L^\infty(0,T;(\mathcal{H}^1_{L,\beta})^\prime)\cap L^2(0,T;\mathcal{H}^1), \\
        \label{REG:PP:W26inf}
        \scp{\phi}{\psi}&\in L^\infty(0,T;\mathcal{W}^{2,6})\cap\Big( C\big(\overline{Q}\big)\times C\big(\overline{\Sigma}\big)\Big), \\
        \label{REG:MT:H1inf}
        \scp{\mu}{\theta}&\in L^\infty(0,T;\mathcal{H}^1)\cap L^2(0,T;\mathcal{H}^2), \\
        \scp{F^\prime(\phi)}{G^\prime(\psi)}&\in L^2(0,T;\mathcal{L}^\infty)\cap L^\infty(0,T;\mathcal{L}^6).
    \end{align}
    \end{subequations}
    In particular, $(\phi, \psi, \mu, \theta)$ is a strong solution of system \eqref{EQ:SYSTEM}, i.e., all equations of \eqref{EQ:SYSTEM} are satisfied almost everywhere.
\end{theorem}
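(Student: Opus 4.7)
The strategy is to first upgrade the time regularity via a discrete-time-differentiation argument, and then to bootstrap the spatial regularity using the elliptic theory for bulk--surface systems in Proposition~\ref{Prop:Appendix} and \cite[Theorem~3.3]{Knopf2021a}.

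For the time regularity, I would employ the forward difference quotient $\delth f(t) \coloneqq h^{-1}\bigl(f(t+h) - f(t)\bigr)$. Taking \eqref{SING:WF} at times $t$ and $t+h$, subtracting and dividing by $h$, I would test the discretized version of \eqref{WF:PP:SING} with $\scp{\delth\mu}{\delth\theta}\in\mathcal{H}^1_{L,\beta}$ and the discretized version of \eqref{WF:MT:SING} with $\scp{\delth\phi}{\delth\psi}\in\mathcal{H}^1_{K,\alpha}$, and sum the two identities. Since the mobilities are constant, the cross-terms combine into $\frac{\mathrm{d}}{\mathrm{d}t}\tfrac{1}{2}\norm{\scp{\delth\phi}{\delth\psi}}_{K,\alpha}^2$ plus Lipschitz contributions from $f_2, g_2$, while the right-hand side provides the non-negative dissipation
\begin{align*}
m_\Om^\ast\norm{\Grad\delth\mu}_{L^2(\Om)}^2 + m_\Ga^\ast\norm{\Gradg\delth\theta}_{L^2(\Ga)}^2 + \sigma(L)\norm{\beta\delth\theta - \delth\mu}_{L^2(\Ga)}^2,
\end{align*}
and the singular contributions $\intO(\delth\xi)(\delth\phi)\dx + \intG(\delth\xi_\Ga)(\delth\psi)\dG \ge 0$ have a good sign by monotonicity of $f_1, g_1$ and may simply be dropped. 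The discrete derivatives of the convective terms split via a discrete product rule as $\Div(\delth\phi\cdot\boldsymbol v(t)) + \Div(\phi(t+h)\cdot\delth\boldsymbol v)$ (analogously on $\Ga$), and are absorbed into the dissipation by H\"older and Gagliardo--Nirenberg inequalities, using $\boldsymbol v \in H^1(0,T;\mathbf L^{6/5}(\Om))\cap L^\infty(0,T;\mathbf L^2(\Om))$ and the analogue for $\boldsymbol w$. The compatibility condition~\ref{cond:MT:0} is used precisely to bound $\norm{\scp{\delth\phi}{\delth\psi}(0)}_{L,\beta,\ast}$ uniformly in $h$. Gr\"onwall's lemma then gives uniform-in-$h$ estimates for $\scp{\delth\phi}{\delth\psi}$ in $L^\infty(0,T;(\mathcal{H}^1_{L,\beta})^\prime)\cap L^2(0,T;\mathcal{H}^1)$, and passing $h \to 0$ yields the first line of \eqref{REG:STRG}.

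For the spatial regularity, I would compare in \eqref{WF:PP:SING} to read off $\scp{\mu}{\theta}\in L^\infty(0,T;\mathcal H^1)$. Viewing then \eqref{WF:PP:SING} as a bulk-surface elliptic problem for $\scp{\mu}{\theta}$ with right-hand side $\scp{\delt\phi + \Div(\phi\boldsymbol v)}{\delt\psi + \Divg(\psi\boldsymbol w)}\in L^2(0,T;\mathcal L^2)$, Proposition~\ref{Prop:Appendix} upgrades the regularity to $\scp{\mu}{\theta}\in L^2(0,T;\mathcal H^2)$, hence also $\scp{\mu}{\theta}\in L^2(0,T;\mathcal L^\infty)$ by Sobolev embedding in $d\le 3$. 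Treating next \eqref{EQ:SYSTEM:2}, \eqref{EQ:SYSTEM:4}, \eqref{EQ:SYSTEM:5} as a bulk-surface elliptic system for $\scp{\phi}{\psi}$ with right-hand side $\scp{\mu - F^\prime(\phi)}{\theta - G^\prime(\psi)}$, the bound $\scp{F^\prime(\phi)}{G^\prime(\psi)}\in L^\infty(0,T;\mathcal L^6)$ from Theorem~\ref{THEOREM:EOWS} together with $\scp{\mu}{\theta}\in L^\infty(0,T;\mathcal H^1)\hookrightarrow L^\infty(0,T;\mathcal L^6)$ yields $\scp{\phi}{\psi}\in L^\infty(0,T;\mathcal W^{2,6})$, whence $\phi\in C(\overline Q)$ and $\psi\in C(\overline\Sigma)$ by Sobolev embedding. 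The final claim $\scp{F^\prime(\phi)}{G^\prime(\psi)}\in L^2(0,T;\mathcal L^\infty)$ is obtained from the pointwise identities $F^\prime(\phi) = \mu + \Lap\phi - F_2^\prime(\phi)$ and its surface counterpart, combined with $\scp{\mu}{\theta}\in L^2(0,T;\mathcal L^\infty)$ and a further elliptic bootstrap, exploiting the strong convexity in~\ref{S4} to control the sublevel sets of $F_1^\prime(\phi)$ and $G_1^\prime(\psi)$ a.e.\ in time.

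I expect the main obstacle to lie in the first step: making the discrete-time-difference identity uniform across all parameter regimes $(K,L)\in[0,\infty]^2$, so that the singular contributions retain their monotone sign and the boundary coupling terms weighted by $\sigma(K), \sigma(L)$ are consistently controlled---with special care in the degenerate cases $K, L \in \{0, \infty\}$, where the underlying function spaces $\mathcal H^1_{K,\alpha}, \mathcal H^1_{L,\beta}$ change. Controlling $\scp{\delth\phi}{\delth\psi}(0)$ uniformly in $h$ via~\ref{cond:MT:0}, and absorbing the discrete time differences of the convective terms by trading regularity of $\boldsymbol v, \boldsymbol w$ against dissipation without losing powers of $h$, are the other delicate technical points of the argument.
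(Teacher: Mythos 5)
Your overall architecture (forward difference quotients in time, followed by an elliptic bootstrap in space via Proposition~\ref{Prop:Appendix}) coincides with the paper's, and your Steps 2--3 essentially match its Steps 2--3. However, the central energy estimate of your first step is set up inconsistently, and the inconsistency conceals a real obstruction. Testing the discretely differentiated \eqref{WF:PP:SING} with $(\delth\mu,\delth\theta)$ and \eqref{WF:MT:SING} with $(\delth\phi,\delth\psi)$ does not make the cross-terms combine into $\ddt\tfrac12\norm{(\delth\phi,\delth\psi)}_{K,\alpha}^2$: the first pairing produces $\bigang{(\delt\delth\phi,\delt\delth\psi)}{(\delth\mu,\delth\theta)}$, and converting this into a time derivative forces you to substitute the differentiated $\mu$-equation, so the singular nonlinearity ends up paired with $\delt\delth\phi$, i.e.\ you pick up $\intO \delth\xi\,\delt\delth\phi\dx$ (and its surface analogue) rather than $\intO\delth\xi\,\delth\phi\dx$. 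The former has no sign from monotonicity, is not even a well-defined pairing a priori ($\delt\delth\phi$ lives only in $(\mathcal{H}^1_{L,\beta})'$ while $\delth\xi$ is merely $L^2$), and controlling it would require a bound on $F_1''(\phi)$, i.e.\ separation --- which is circular. Relatedly, the quantity your Gr\"onwall argument would have to propagate is $\norm{(\delth\phi,\delth\psi)(0)}_{K,\alpha}$, which \ref{cond:MT:0} does not control; the compatibility condition only provides $(\mu_0,\theta_0)\in\mathcal{H}^1_{L,\beta}$ and is calibrated exactly to bound the weaker quantity $\norm{(\delth\phi,\delth\psi)(0)}_{L,\beta,\ast}$. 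Note also that your scheme aims at $\delt\phi\in L^\infty(0,T;H^1(\Omega))$-type information, which is stronger than what the theorem asserts and is not attainable here.

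The fix is the paper's choice of test function: test the differentiated \eqref{WF:PP:SING} with $\mathcal{S}_{L,\beta}(\delth\phi,\delth\psi)$. This produces $\ddt\tfrac12\norm{(\delth\phi,\delth\psi)}_{L,\beta,\ast}^2$ together with the cross term $\bigscp{(\delth\mu,\delth\theta)}{(\delth\phi,\delth\psi)}_{\mathcal{L}^2}$; substituting the differentiated $\mu$-equation then pairs the nonlinearity with $\delth\phi$, where monotonicity of $f_1,g_1$ gives the favorable sign and Lipschitz continuity of $f_2,g_2$ gives a term $C\norm{(\delth\phi,\delth\psi)}_{\mathcal{L}^2}^2$ absorbed via Ehrling's lemma. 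This yields exactly the asserted regularity $(\delt\phi,\delt\psi)\in L^\infty(0,T;(\mathcal{H}^1_{L,\beta})')\cap L^2(0,T;\mathcal{H}^1)$, with the initial value handled through \ref{cond:MT:0} as in the paper. A secondary gap: the final claim $(F'(\phi),G'(\psi))\in L^2(0,T;\mathcal{L}^\infty)$ cannot be read off from the identity $F'(\phi)=\mu+\Lap\phi-F_2'(\phi)$, since $\Lap\phi$ is only known in $L^\infty(0,T;L^6(\Omega))$; the paper instead runs a Moser-type iteration, testing with $\abs{F_1'(\phi_k)}^{r-2}F_1'(\phi_k)$ for truncations $\phi_k$, deriving $L^r$ bounds with constants uniform in $r$, and letting $r\to\infty$ and $k\to\infty$ via Fatou's lemma.
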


\medskip

The proof of Theorem~\ref{thm:highreg} can be found in Section~\ref{SECT:HIGHREG}.

\medskip

\begin{remark}\label{REM:REG:STRONG}
    Concerning the regularities stated in \eqref{REG:PP:W26inf}, we note the following.
    \begin{enumerate}[label=\textnormal{(\alph*)},topsep=0ex]
        \item We even get
        \begin{align*}
            G^\prime(\psi)\in L^\infty(0,T;L^s(\Ga))
            \qquad\text{~and~}\qquad 
            \psi\in L^\infty(0,T;W^{2,s}(\Ga))
        \end{align*}
        for any $s\in[1,\infty)$. This can be shown by invoking \eqref{REG:MT:H1inf} and proceeding as in the proofs of Proposition~\ref{Prop:FG:Linf} and Corollary~\ref{Cor:pp:W2r}. 
        \item If the spatial dimension is two (i.e., $d=2$), we even get
        \begin{alignat*}{2}
            &F^\prime(\phi)\in L^\infty(0,T;L^r(\Om)), \qquad &&\phi\in L^\infty(0,T;W^{2,r}(\Om)), \\
            &G^\prime(\psi)\in L^\infty(0,T;L^r(\Ga)), \qquad &&\psi\in L^\infty(0,T;W^{2,r}(\Ga))
        \end{alignat*}
        for all $r\in [1,\infty)$. In view of \eqref{REG:MT:H1inf}, this can also be obtained similarly to Proposition~\ref{Prop:FG:Linf} and Corollary~\ref{Cor:pp:W2r}.
    \end{enumerate}
\end{remark}

\medskip

\begin{remark}
    If the spatial dimension is two (i.e., $d=2$), the assumptions on the velocity fields $\boldsymbol{v}$ and $\boldsymbol{w}$ can be relaxed.
    Namely, it is enough to assume that 
    \begin{align*}
        \boldsymbol{v} &\in H^1(0,T;\mathbf{L}^{1 + \omega}(\Om))\cap L^2(0,T;\mathbf{L}_\Div^{2 + \omega}(\Om))\cap L^\infty(0,T;\mathbf{L}^2(\Om)),
        \\
        \boldsymbol{w} &\in H^1(0,T;\mathbf{L}^1(\Ga))\cap L^2(0,T;\mathbf{L}_\Div^{2}(\Ga))\cap L^\infty(0,T;\mathbf{L}^2(\Ga))
    \end{align*}
    for some $\omega > 0$.
    This is due to the better Sobolev embeddings in two dimensions, which allow us to perform the computations presented in the proof of Theorem~\ref{thm:highreg} with less regular velocity fields.
\end{remark}

\medskip

\begin{remark} 
    Concerning the compatibility assumption \ref{cond:MT:0}, which was made in Theorem~\ref{thm:highreg} and Theorem~\ref{thm:sepprop}, we note the following.
    \begin{enumerate}[label=\textnormal{(\alph*)},topsep=0ex]
    \item Even without assuming \ref{cond:MT:0},
    it would still be possible to obtain the regularities \eqref{REG:STRG} stated in Theorem~\ref{thm:highreg}, but with the time interval $(0,T)$ replaced by $(t_0,T)$ for any time $t_0 > 0$. This is because solutions of system \eqref{CH} instantly regularize for positive times due to parabolic smoothing.     
    \item If $L\in (0,\infty]$, it is quite easy to find initial data $\scp{\phi_0}{\psi_0}$, which satisfy the compatibility condition \ref{cond:MT:0}.

    Let $(\phi_0,\psi_0)\in \mathcal{H}^3$ fulfill the conditions \eqref{cond:init}, and we additionally assume 
    \begin{equation}
        \label{cond:FG}
        \big( F'(\phi_0) , G'(\psi_0) \big) \in \mathcal{H}^1
    \end{equation}
    and
    \begin{align}
        \begin{cases}
            K\deln\phi_0 = \alpha\psi_0 - \phi_0 &\text{if~} K\in[0,\infty), \\
            \deln\phi_0 = 0 &\text{if~} K = \infty
        \end{cases}
        \qquad\text{on~}\Ga.
    \end{align}  
    We then define $\scp{\mu_0}{\theta_0}\in\mathcal{H}^1 = \mathcal{H}^1_{L,\beta}$ as
    \begin{alignat*}{2}
        \mu_0 &\coloneqq -\Lap\phi_0 + F^\prime(\phi_0) &&\qquad\text{in~}\Om, \\
        \theta_0 &\coloneqq - \Lapg\psi_0 + G^\prime(\psi_0) + \alpha\deln\phi_0 &&\qquad\text{on~}\Gamma.
    \end{alignat*}
    Now, via integration by parts, it is easy to see that \ref{cond:MT:0} is fulfilled.

    Of course, condition \eqref{cond:FG} (and thus \eqref{cond:init:int}) is clearly satisfied if the initial data are strictly separated from $\pm 1$. This means that there exists $\delta_0>0$ such that for all $x\in\Omega$ and $z\in\Gamma$, it holds
    \begin{equation*}
        -1+\delta_0 \le \phi_0(x)\le 1-\delta_0
        \quad\text{and}\quad
        -1+\delta_0 \le \psi_0(z)\le 1-\delta_0.
    \end{equation*}
    \item In the case $L=0$, the situation is more involved and it is not so easy to see that admissible, non-trivial initial data satisfying \ref{cond:MT:0} actually exist. In particular, proceeding as in (b) is not possible as this construction would not ensure that the resulting chemical potentials $\mu_0$ and $\theta_0$ actually fulfill the Dirichlet type coupling condition $\mu_0 = \beta \theta_0$ on $\Ga$ that is demanded in \ref{cond:MT:0}.
    
    \item In all cases $(K,L) \in [0,\infty]^2$, choosing $(\phi_0,\psi_0) = (0,0)$ as well as $(\mu_0,\theta_0) = (0,0)$ provides admissible initial data satisfying \ref{cond:MT:0} as well as \eqref{cond:init} since 
    $f_1(0) = 0$ and $g_1(0) = 0$ was assumed.
    \end{enumerate}
\end{remark}

\subsection{Separation properties for logarithmic type potentials}

Our last theorem is concerned with establishing separation properties for logarithmic type potentials. To be precise, we show that the phase-fields $\phi$ and $\psi$ remain separated from the values $\pm 1$, which correspond to the pure phases.

\begin{theorem}\label{thm:sepprop}
    Suppose that the assumptions of Theorem~\ref{thm:highreg} hold. Let $(\phi, \psi, \mu, \theta)$ be the corresponding unique strong solution, which exists according to Theorem~\ref{thm:highreg}.
    \begin{enumerate}[label=\textnormal{(\alph*)},topsep=0ex]
        \item \label{thm:sepprop:a} The phase-fields $\phi$ and $\psi$ remain strictly separated almost everywhere in time, that is:
        \begin{align}
            \label{sepprop:phi:3d}
            \begin{aligned}
            &\text{For almost all $t\in[0,T]$, there exists $\delta(t)\in(0,1]$ such that:}\\
            &
            \norm{\phi(t)}_{L^\infty(\Omega)} \leq 1 - \delta(t)
            \quad\text{and}\quad
            \norm{\psi(t)}_{L^\infty(\Gamma)} \leq 1 - \delta(t).
            \end{aligned}
        \end{align}
        \item \label{thm:sepprop:b}If additionally \ref{S6} holds, the phase-field $\psi$ fulfills the uniform strict separation property:
        \begin{align}
            \label{sepprop:psi}
            \text{There exists $\delta_\ast\in(0,1]$ such that for all $(z,t)\in\Sigma$:}\quad
            \abs{\psi(z,t)}\leq 1 - \delta_\ast.
        \end{align}
        \item \label{thm:sepprop:c} If additionally $d=2$ and \ref{S5} holds, the phase-field $\phi$ fulfills the uniform strict separation property:
        \begin{align}
        \label{sepprop:phi:2d}
        \text{There exists $\delta_\star\in(0,1]$ such that for all $(x,t) \in Q$:}\quad
        \abs{\phi(x,t)} \leq 1 - \delta_\star.
    \end{align}
    \end{enumerate}
\end{theorem}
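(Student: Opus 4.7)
\textbf{Part (a)} follows immediately from the regularity $(F^\prime(\phi),G^\prime(\psi))\in L^2(0,T;\mathcal{L}^\infty)$ provided by Theorem~\ref{thm:highreg}. For almost every $t\in[0,T]$, both $F_1^\prime(\phi(t))$ and $G_1^\prime(\psi(t))$ are bounded, because the smooth parts $F_2^\prime$ and $G_2^\prime$ are Lipschitz by~\ref{S2}. Since~\ref{S4} ensures that $F_1^\prime$ and $G_1^\prime$ diverge to $\pm\infty$ at the endpoints $\pm 1$, there exist $\delta_\phi(t),\delta_\psi(t)\in(0,1]$ with $\norm{\phi(t)}_{L^\infty(\Omega)}\le 1-\delta_\phi(t)$ and $\norm{\psi(t)}_{L^\infty(\Gamma)}\le 1-\delta_\psi(t)$. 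Setting $\delta(t)\coloneqq\min\{\delta_\phi(t),\delta_\psi(t)\}$ yields~\eqref{sepprop:phi:3d}.

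\textbf{Parts (b) and (c)} demand uniform-in-time $L^\infty$ bounds on $G_1^\prime(\psi)$ and $F_1^\prime(\phi)$, respectively. My plan is to adapt the classical Moser/Alikakos iteration developed for the two-dimensional Cahn--Hilliard equation (see, e.g.,~\cite{Gal2017, Fukao2021}) to the present bulk-surface setting. For part (b), the crucial observation is that $\Gamma$ is always a two-dimensional compact submanifold, so the Trudinger--Moser inequality
\begin{equation*}
    \intG \exp\bigl(c_0|u|^2\bigr)\dG \le C\bigl(\norm{u}_{H^1(\Gamma)}\bigr)\quad\text{for all } u\in H^1(\Gamma)
\end{equation*}
is available. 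Combining this with~\ref{S6} (where $\gamma<2$) yields $G_1^{\prime\prime}(\psi)\in L^q(\Sigma)$ for every $q<\infty$, with bounds depending only on $\norm{G_1^\prime(\psi)}_{L^\infty(0,T;H^1(\Gamma))}$. The latter quantity is in turn controlled by writing $G_1^\prime(\psi) = \theta + \Lapg\psi - G_2^\prime(\psi) - \alpha\deln\phi$ and invoking~\eqref{REG:PP:W26inf}--\eqref{REG:MT:H1inf}. Testing this identity with (a regularized version of) $|G_1^\prime(\psi)|^{p-2}G_1^\prime(\psi)$ and discarding the nonnegative term $(p-1)\intG G_1^{\prime\prime}(\psi)|G_1^\prime(\psi)|^{p-2}|\Gradg\psi|^2\dG$ produces an estimate of the form
\begin{equation*}
    \norm{G_1^\prime(\psi)}_{L^p(\Gamma)}^p \le \bigl(\norm{\theta}_{L^p(\Gamma)} + \norm{G_2^\prime(\psi)}_{L^p(\Gamma)} + |\alpha|\,\norm{\deln\phi}_{L^p(\Gamma)}\bigr)\norm{G_1^\prime(\psi)}_{L^p(\Gamma)}^{p-1},
\end{equation*}
whose right-hand side is bounded in $t$ by a quantity that grows at most polynomially in $p$. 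A Moser-type iteration (equivalently, passing $p\to\infty$) then delivers a uniform bound $\norm{G_1^\prime(\psi)}_{L^\infty(\Sigma)}\le M$, from which~\eqref{sepprop:psi} follows via~\ref{S4}. Part (c) is proved by the exactly analogous argument applied to $\mu=-\Lap\phi+F_1^\prime(\phi)+F_2^\prime(\phi)$ in $\Omega$, using~\ref{S5} in place of~\ref{S6}; the restriction $d=2$ is essential because the Trudinger--Moser inequality on $H^1(\Omega)$ requires $\Omega\subset\R^2$.

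\textbf{Main obstacles.} Two technical points will require particular care. First, the test function $|G_1^\prime(\psi)|^{p-2}G_1^\prime(\psi)$ (and its bulk counterpart) is not admissible a priori because $G_1^\prime(\psi)$ need not be bounded; I will handle this via a truncation-and-passage-to-the-limit argument, deriving the $L^p$ estimate with constants independent of the truncation level. The second and more delicate point is the control of the coupling term $\alpha\deln\phi$ in the surface equation (and of the boundary terms generated by~\eqref{EQ:SYSTEM:5} in part (c)) in $L^p(\Gamma)$ with bounds depending at most polynomially on $p$ and uniformly in $t$. This is where the regularity $\phi\in L^\infty(0,T;\mathcal{W}^{2,6})$ from~\eqref{REG:PP:W26inf} plays the decisive role: by the trace theorem, $\deln\phi\in L^\infty(0,T;L^q(\Gamma))$ for every $q<\infty$, which is precisely what is needed to close the Moser iteration without disturbing the uniform control.
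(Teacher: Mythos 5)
Part (a) of your argument is essentially the paper's: the pointwise-in-time $L^\infty$ bounds on $F_1'(\phi(t))$ and $G_1'(\psi(t))$ coming from $\scp{F'(\phi)}{G'(\psi)}\in L^2(0,T;\mathcal{L}^\infty)$, together with the blow-up of $F_1'$, $G_1'$ at $\pm1$ from \ref{S4}, give \eqref{sepprop:phi:3d}. That part is fine.

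For (b) and (c) your strategy is in the right spirit (two-dimensionality, Trudinger--Moser, the exponents $\gamma,\lambda<2$), but two steps as written do not go through. First, you apply the Trudinger--Moser inequality to $u=G_1'(\psi)$ and claim that $\norm{G_1'(\psi)}_{L^\infty(0,T;H^1(\Gamma))}$ is controlled via the identity $G_1'(\psi)=\theta+\Lapg\psi-G_2'(\psi)-\alpha\deln\phi$ and \eqref{REG:PP:W26inf}--\eqref{REG:MT:H1inf}. This is not available: $\psi\in L^\infty(0,T;W^{2,6}(\Gamma))$ only gives $\Lapg\psi\in L^\infty(0,T;L^6(\Gamma))$, not $H^1(\Gamma)$, and the alternative route $\Gradg G_1'(\psi)=G_1''(\psi)\Gradg\psi$ is circular, since an a priori bound on $G_1''(\psi)$ is exactly what you are trying to produce. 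Second, even granting an $L^p$ estimate of the form $\norm{G_1'(\psi)}_{L^p(\Gamma)}^p\le A_p\,\norm{G_1'(\psi)}_{L^p(\Gamma)}^{p-1}$ with $A_p$ growing polynomially in $p$, dividing gives only $\norm{G_1'(\psi)}_{L^p(\Gamma)}\le A_p$, and letting $p\to\infty$ yields nothing when $A_p\to\infty$; there is no reverse-H\"older gain here, so this is not a Moser iteration and it cannot deliver a uniform $L^\infty$ bound. Note also that in your scheme the hypotheses \ref{S5}/\ref{S6} end up playing no role in the final $L^\infty$ step, which is a sign that the pieces are not assembled as intended.

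The paper closes the argument differently, and this is the missing idea: the sharp two-dimensional Sobolev inequality $\norm{u}_{L^r}\le C\sqrt{r}\,\norm{u}_{H^1}$ is applied not to $F_1'(\phi)$ or $G_1'(\psi)$ but to $\mu,\theta,\phi,\psi$ themselves (which are in $H^1$ uniformly in time by Theorem~\ref{thm:highreg}), inside the $L^r$ estimate \eqref{est:FG:r:final} whose constant is independent of $r$. This yields $\norm{F_1'(\phi(t))}_{L^r(\Omega)}\le C\sqrt{r}$ uniformly in $t$, which (precisely because $\lambda<2$) gives finite exponential moments of $\abs{F_1'(\phi)}^\lambda$ and hence, via \ref{S5}, $F_1''(\phi)\in L^\infty(0,T;L^6(\Omega))$. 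Only then does the chain rule give $\Grad F_1'(\phi)=F_1''(\phi)\Grad\phi\in L^\infty(0,T;L^3(\Omega))$, so that $F_1'(\phi)\in L^\infty(0,T;W^{1,3}(\Omega))\emb L^\infty(0,T;L^\infty(\Omega))$ in two dimensions; the uniform separation then follows as in your part (a). The same scheme on $\Gamma$ (always two-dimensional) gives (b). You should restructure your proof of (b) and (c) along these lines: the upgrade from $\sqrt{r}$-growth of $L^r$ norms to a uniform $L^\infty$ bound must pass through the second derivative and a $W^{1,3}\emb L^\infty$ embedding, not through letting $p\to\infty$ in the $L^p$ estimate.
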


\medskip

The proof of Theorem~\ref{thm:sepprop} is presented in Section~\ref{SECT:SEPARATION}.

In the remainder of this paper, the letter $C$ will denote generic positive constants that may depend on $\Omega$, the quantities introduced in \ref{ASSUMP:1}-\ref{ASSUMP:MOBILITY}, the potentials $F$ and $G$ and the corresponding constants introduced in \ref{S1}-\ref{S6}, the prescribed initial data, and the prescribed $(\phi_0,\psi_0)$ and velocity fields $(\boldsymbol{v},\boldsymbol{w})$.

\section{Existence of weak solutions}
\label{SECT:EOWS}
\subsection{Maximal monotone graphs and the Yosida approximation}
\label{SUBSECT:YOS}
In this subsection, we briefly review the basic concepts regarding the Yosida approximation of maximal monotone operators. For simplicity, we will restrict ourselves to the case which is relevant to us, i.e., we only consider $F_1,G_1:\R\rightarrow [0,\infty]$ as in \ref{S1}. For a more comprehensive overview related to maximal monotone operators and the Yosida approximation, we refer the reader to \cite[Chapter~2]{Brezis}, \cite[Chapter~2]{Barbu2010}, \cite[Chapter~IV]{Showalter} and \cite[Section~5]{Garcke2017}. \\[0.3em]
 For each $\ep \in(0,1)$, we define the \textit{Yosida approximations} $f_{1,\ep},g_{1,\ep}:\R\rightarrow\R$ of the subdifferentials $f_1 = \partial F_1$ and $g_1 = \partial G_1$, respectively,  as
\begin{align}\label{yosida}
    f_{1,\ep}(r) \coloneqq \frac{1}{\ep}\Big( r - (I + \ep f_1)^{-1}(r)\Big), \quad g_{1,\ep}(r) \coloneqq \frac{1}{\ep}\Big( r - (I + \ep g_1)^{-1}(r)\Big)
\end{align}
for all $r\in\R$, respectively, where $I$ denotes the identity on $\R$. Note that while $f_1$ and $g_1$ might be multi-valued, their Yosida approximations are single-valued. It is well known that for all $r\in\R$, it holds
\begin{align}\label{jonas-1}
    \abs{f_{1,\ep}(r)} \leq \abs{f_1^\circ(r)}\ \text{for all~} \ep\in(0,1), \quad \text{and~} f_{1,\ep}(r)\rightarrow f_1^\circ(r) \ \text{as~} \ep\rightarrow 0.
\end{align}
The analogous properties hold for $g_{1,\ep}$. Furthermore, both $f_{1,\ep}$ and $g_{1,\ep}$ are increasing and Lipschitz continuous with Lipschitz constant $\frac{1}{\ep}$. In particular, since $0\in f_1(0)\cap g_1(0)$,  this implies in combination with \eqref{jonas-1} that
\begin{align}\label{jonas-2}
    \abs{f_{1,\ep}(r)} \leq \frac{1}{\ep}\abs{r}, \quad \abs{g_{1,\ep}(r)} \leq \frac{1}{\ep}\abs{r}
\end{align}
for all $r\in\R$. Now, we define for each $\ep\in(0,1)$ the \textit{Moreau--Yosida approximations} $F_{1,\ep}$ and $G_{1,\ep}$ of $F_1$ and $G_1$, respectively, as 
\begin{align}\label{moreau-yosida}
    F_{1,\ep}(r) \coloneqq \inf_{s\in\R}\left\{\frac{1}{2\ep}\abs{r-s}^2 + F_1(s)\right\}, \quad G_{1,\ep}(r) \coloneqq \inf_{s\in\R}\left\{\frac{1}{2\ep}\abs{r-s}^2 + G_1(s)\right\}
\end{align}
for all $r\in\R$, respectively. It is well known that $F_{1,\ep}$ is convex and differentiable with $F_{1,\ep}^\prime = f_{1,\ep}$ for all $\ep\in(0,1)$ (see, e.g., \cite[Theorem~2.9]{Barbu2010}).
Moreover, it holds that
\begin{align}\label{est:Fep:F}
    0 \leq F_{1,\ep}(r) \leq F_1(r) \ \text{for all~} \ep\in(0,1), \ \text{and~} F_{1,\ep}(r)\nearrow F_1(r) \ \text{as~}\ep\rightarrow 0
\end{align}
for all $r\in\R$. The analogous properties hold for $G_{1,\ep}$. In particular, we may write
\begin{align*}
    F_{1,\ep}(r) = \int_0^r f_{1,\ep}(s)\ds, \quad G_{1,\ep}(r) = \int_0^r g_{1,\ep}(s)\ds
\end{align*}
for all $r\in\R$. In addition, we readily infer from the definition \eqref{moreau-yosida} and the fact that $F_1(0) = G_1(0) = 0$ that
\begin{align}\label{jonas-3}
    \abs{F_{1,\ep}(r)} \leq \frac{1}{2\ep} \abs{r}^2, \quad \abs{G_{1,\ep}(r)} \leq \frac{1}{2\ep} \abs{r}^2
\end{align}
for all $r\in\R$.
Now, the domination condition \eqref{domination} implies that 
\begin{align}\label{domination-reg}
    \abs{f_{1,\ep}(\alpha r)} \leq \kappa_1\abs{g_{1,\ep}(r)} + \kappa_2
\end{align}
for all $r\in\R$ and all $\ep\in(0,1)$, where the constants $\kappa_1, \kappa_2$ are the same as in \eqref{domination} (see \cite[Lemma 4.4]{Calatroni2013}). In summary, we conclude that the approximate potentials $F_\ep \coloneqq F_{1,\ep} + F_2$ and $G_\ep \coloneqq G_{1,\ep} + G_2$ fulfill the following growth conditions:
\begin{enumerate}[label=\textnormal{\bfseries (R\arabic*)}]
    \item \label{R1} There exist constants $c_{F_\ep}, c_{G_\ep}, c_{F_\ep^\prime}, c_{G_\ep^\prime} \geq 0$, which may depend on $\ep$, such that the first-order derivatives of $F_\ep$ and $G_\ep$ satisfy the growth conditions
    \begin{align}
        \abs{F_\ep^\prime(s)} &\leq c_{F_\ep^\prime}(1 + \abs{s}), \label{GC:F'ep}\\
        \abs{G_\ep^\prime(s)} &\leq c_{G_\ep^\prime}(1 + \abs{s}), \label{GC:G'ep} \\
        \abs{F_\ep(s)} &\leq c_{F_\ep}(1 + \abs{s}^2), \label{GC:Fep}\\
        \abs{G_\ep(s)} &\leq c_{G_\ep}(1 + \abs{s}^2) \label{GC:Gep}
    \end{align}
    for all $s\in\R$. 
\end{enumerate}
Furthermore, the growth conditions in \ref{S1} entail (see, e.g., \cite[Section~3]{Colli2024}):
\begin{enumerate}[label=\textnormal{\bfseries (R\arabic*)},start=2]
    \item\label{R2} There exists $\ep_\star \in(0,1)$ as well as constants $C_\Omega,C_\Gamma\ge 0$ such that
    \begin{align}
         \label{grow-reg-f}
        F_\ep(s) &\geq \abs{s}^2 - C_\Omega, \\
         \label{grow-reg-g}
        G_\ep(s) &\geq \abs{s}^2 - C_\Gamma
    \end{align}
    for all $s\in\R$ and $\ep\in(0,\ep_\star)$.
\end{enumerate}
In particular, this implies that for $\ep\in (0,\ep_\star)$, $F_\ep$ and $G_\ep$ are bounded from below by $-C_\Omega$ and $-C_\Gamma$, respectively. Hence, without loss of generality, we may assume that $F_\ep$ and $G_\ep$ are non-negative provided that $\ep\in (0,\ep_\star)$. (Otherwise, we simply replace $F_2$ by $F_2 + C_\Omega$ and $G_2$ by $G_2+C_\Gamma$.) 

\subsection{Construction of a weak solution}
\label{SUBSECT:CWS}
\begin{proof}[Proof of Theorem~\ref{THEOREM:EOWS}]
\textbf{Step 1.} We first construct a sequence of approximate solutions.
Therefore, let $\ep\in(0,\ep_\star)$ with $\ep_\star$ as in \ref{R2}. 
Then, due to \cite[Theorem 3.2]{Knopf2024}, there exists a weak solution $(\phi_\ep, \psi_\ep, \mu_\ep, \theta_\ep)$ satisfying the weak formulation \eqref{REG:WF:PP}-\eqref{REG:WF:MT} with $F = F_\ep$ and $G = G_\ep$. It is worth mentioning that in \cite[Theorem 3.2]{Knopf2024}, the stronger regularity $\boldsymbol{w}\in L^2(0,T;\mathbf{L}^3_\tau(\Ga))$ was imposed for convenience. However, it is easy to see that by slightly modifying the proof, this result actually holds true for the regularity $\boldsymbol{w}\in L^2(0,T;\mathbf{L}_\tau^{2+\omega}(\Ga))$ that is prescribed in the present paper. As a further consequence of \cite[Theorem 3.2]{Knopf2024}, the energy inequality
\begin{align}\label{EI:ep}
    \begin{split}
        &E^\ep_K(\phi_\ep(t),\psi_\ep(t)) + \int_0^t\intO m_\Om(\phi_\ep)\abs{\Grad\mu_\ep}^2\dxs  \\
        &\quad + \int_0^t\intG m_\Ga(\psi_\ep)\abs{\Gradg\theta_\ep}^2\dGs
        + \sigma(L) \int_0^t\intG (\beta\theta_\ep-\mu_\ep)^2\dGs \\
        &\quad - \int_0^t\intO\phi_\ep\boldsymbol{v}\cdot\Grad\mu_\ep\dxs - \int_0^t\intG \psi_\ep\boldsymbol{w}\cdot\Gradg\theta_\ep\dGs 
        \\[1ex]
        &\leq E_K^\ep(\phi_0,\psi_0)
    \end{split}
\end{align}
holds for all $t\in[0,T]$. Here, $E_K^\ep$ denotes the energy given by \eqref{energy:K} with $F_\ep$ and $G_\ep$ in place of $F$ and $G$, respectively. Additionally, as also stated in \cite[Theorem 3.2]{Knopf2024}, if $\Om$ is of class $C^2$, then $\scp{\phi_\ep}{\psi_\ep}\in L^2(0,T;\mathcal{H}^2)$, and the equations
\begin{subequations}\label{STRG:EP}
    \begin{align}
        \label{STRG:MU:EP}
        &\mu_\ep = -\Lap\phi_\ep + F_\ep'(\phi_\ep) &&\text{a.e.~in } Q, \\
        \label{STRG:THETA:EP}
        &\theta_\ep = -\Lapg\psi_\ep + G_\ep'(\psi_\ep) + \alpha\deln\phi_\ep &&\text{a.e.~on } \Sigma, \\
        \label{STRG:PHIPSI:EP}
        & \begin{cases} 
            K\deln\phi_\ep = \alpha\psi_\ep - \phi_\ep &\text{if} \ K\in [0,\infty), \\
            \deln\phi_\ep = 0 &\text{if} \ K = \infty
        \end{cases} &&  \text{a.e.~on } \Sigma
    \end{align}
\end{subequations}
are fulfilled in the strong sense.

\textbf{Step 2.} We next establish uniform bounds on the approximate solution following from \eqref{EI:ep}. 
By the definition of $E_K^\ep$, we infer from \eqref{EI:ep} that
\begin{align}\label{jonas1}
    \begin{split}
        &\frac{1}{2}\norm{\Grad\phi_\ep(t)}_{L^2(\Om)}^2 + \intO F_\ep(\phi_\ep(t))\dx + \frac{1}{2}\norm{\Gradg\psi_\ep(t)}_{L^2(\Ga)}^2 + \intG G_\ep(\psi_\ep(t))\dG \\
        &\qquad +\sigma(K)\intG \frac12\abs{\alpha\psi_\ep(t)-\phi_\ep(t)}^2\dG + \int_0^t\intO m_\Om(\phi_\ep)\abs{\Grad\mu_\ep}^2\dxs \\
        &\qquad + \int_0^t\intG m_\Ga(\psi_\ep)\abs{\Gradg\theta_\ep}^2\dGs + \sigma(L)\int_0^t\intG (\beta\theta_\ep-\mu_\ep)^2\dGs \\
        &\quad \leq E_K^\ep(\phi_0,\psi_0) + \int_0^t\intO\phi_\ep\boldsymbol{v}\cdot\Grad\mu_\ep\dxs + \int_0^t\intG \psi_\ep\boldsymbol{w}\cdot\Gradg\theta_\ep\dGs
    \end{split}
\end{align}
holds for all $t\in[0,T]$. Now, observe that due to \eqref{cond:init:int} and \eqref{est:Fep:F}, we find
\begin{align*}
    \intO F_\ep(\phi_0)\dx + \intG G_\ep(\psi_0)\dG \leq C,
\end{align*}
and thus, $E_K^\ep(\phi_0,\psi_0) \leq C$. On the other hand, for the convective terms on the right-hand side of \eqref{jonas1}, we use Hölder's and Young's inequality to obtain the estimate
\begin{align}\label{jonas2}
    \begin{split}
        &\int_0^t\intO\phi_\ep\boldsymbol{v}\cdot\Grad\mu_\ep\dxs + \int_0^t\intG \psi_\ep\boldsymbol{w}\cdot\Gradg\theta_\ep\dGs \\
        &\leq \frac{m_\Om^\ast}{2}\int_0^t \intO \abs{\Grad\mu_\ep}^2\dxs + \frac{m_\Ga^\ast}{2}\int_0^t\intG \abs{\Gradg\theta_\ep}^2 \dGs \\
        &\quad + C\int_0^t\Big(\norm{\boldsymbol{v}}_{\mathbf{L}^3(\Om)}^2 + \norm{\boldsymbol{w}}^2_{\mathbf{L}^{2+\omega}(\Ga)}\Big)\norm{\scp{\phi_\ep}{\psi_\ep}}_{\mathcal{H}^1}^2\ds
    \end{split}
\end{align}
for almost all $t\in[0,T]$. In view of \ref{R2}, it holds that
\begin{align}\label{jonas3}
    \begin{split}
        \intO F_\ep(\phi_\ep(t)) \dx + \intG G_\ep(\psi_\ep(t))\dG &\geq \intO \abs{\phi_\ep(t)}^2\dx + \intG \abs{\psi_\ep(t)}^2\dG - C \\
        &= \norm{\phi_\ep(t)}^2_{L^2(\Om)} + \norm{\psi_\ep(t)}^2_{L^2(\Ga)} - C
    \end{split}
\end{align}
for all $t\in[0,T]$ since $\ep\in(0,\ep_\star)$.
This implies that
\begin{align}\label{jonas4}
    \begin{split}
        \frac12\norm{\scp{\phi_\ep(t)}{\psi_\ep(t)}}_{\mathcal{H}^1}^2 
        &\leq C + \frac{1}{2}\norm{\scp{\Grad\phi_\ep(t)}{\Gradg\psi_\ep(t)}}^2_{\mathcal{L}^2} 
        \\
        &\quad + \intO F_\ep(\phi_\ep(t))\dx + \intG G_\ep(\psi_\ep(t))\dG 
    \end{split}
\end{align}
for almost all $t\in[0,T]$. Hence, combining \eqref{jonas1}, \eqref{jonas2} and \eqref{jonas4}, and recalling that $F_\ep$ and $G_\ep$ are also non-negative, an application of Gronwall's lemma readily yields that
\begin{align}
    &\norm{\scp{\phi_\ep}{\psi_\ep}}_{L^\infty(0,T;\mathcal{H}^1)} + \norm{\scp{F_\ep(\phi_\ep)}{G_\ep(\psi_\ep)}}_{L^\infty(0,T;\mathcal{L}^1)} + \sigma(K)\norm{\alpha\psi_\ep - \phi_\ep}_{L^\infty(0,T;L^2(\Ga))} \nonumber\\
    &\quad + \norm{\scp{\Grad\mu_\ep}{\Gradg\theta_\ep}}_{L^2(0,T;\mathcal{L}^2)} + \sigma(L)\norm{\beta\theta_\ep - \mu_\ep}_{L^2(0,T;L^2(\Ga))} \leq C. \label{est:energy}
\end{align}

\textbf{Step 3.} We derive a uniform estimate on the time derivatives of the phase-fields $\phi_\ep$ and $\psi_\ep$. \\
To this end, we test \eqref{WF:PP:SING} with an arbitrary test functions $\scp{\zeta}{\zeta_\Ga}\in\mathcal{H}^1_{L,\beta}$. Then, recalling that the mobility functions $m_\Om$ and $m_\Ga$ are bounded (see \ref{ASSUMP:MOBILITY}), we use Hölder's inequality in combination with the continuous embeddings $H^1(\Om)\emb L^6(\Om)$ and $H^1(\Ga)\emb L^r(\Ga)$ for all $r\in[1,\infty)$ to infer
\begin{align*}
    \begin{split}
        &\abs{\ang{\scp{\delt\phi_\ep}{\delt\psi_\ep}}{\scp{\zeta}{\zeta_\Ga}}_{\mathcal{H}^1_{L,\beta}}} \\
        &\leq \norm{\phi_\ep}_{H^1(\Om)}\norm{\boldsymbol{v}}_{\mathbf{L}^3(\Om)}\norm{\zeta}_{H^1(\Om)} + C \norm{\Grad\mu_\ep}_{\mathbf{L}^2(\Om)}\norm{\zeta}_{H^1(\Om)} \\
        &\quad + \norm{\psi_\ep}_{H^1(\Ga)}\norm{\boldsymbol{w}}_{\mathbf{L}^{2+\omega}(\Ga)} \norm{\zeta_\Ga}_{H^1(\Ga)} + C\norm{\Gradg\theta_\ep}_{\mathbf{L}^2(\Ga)}\norm{\zeta_\Ga}_{H^1(\Ga)} \\
        &\quad + \sigma(L)\norm{\beta\theta_\ep - \mu_\ep}_{L^2(\Ga)}\norm{\beta\zeta_\Ga - \zeta}_{L^2(\Ga)}
    \end{split}
\end{align*}
a.e.~on $[0,T]$. Thus, after taking the supremum over all $\scp{\zeta}{\zeta_\Ga}\in\mathcal{H}^1_{L,\beta}$ with $\norm{\scp{\zeta}{\zeta_\Ga}}_{\mathcal{H}^1} \leq 1$, we take the square of the resulting inequality and integrate in time from $0$ to $T$. This yields
\begin{align*}
    \begin{split}
        &\norm{\scp{\delt\phi_\ep}{\delt\psi_\ep}}_{L^2(0,T;(\mathcal{H}^1_{L,\beta})^\prime)}^2 \\
        &\leq C\norm{\phi_\ep}^2_{L^\infty(0,T;H^1(\Om))}\norm{\boldsymbol{v}}^2_{L^2(0,T;\mathbf{L}^3(\Om))} + C \norm{\Grad\mu_\ep}^2_{L^2(0,T;\mathbf{L}^2(\Om))} \\
        &\quad + C\norm{\psi_\ep}^2_{L^\infty(0,T;H^1(\Ga))}\norm{\boldsymbol{w}}^2_{L^2(0,T;\mathbf{L}^{2+\omega}(\Ga))} + C\norm{\Gradg\theta_\ep}^2_{L^2(0,T;\mathbf{L}^2(\Ga))} \\
        &\quad + C\sigma(L)^2\norm{\beta\theta_\ep - \mu_\ep}^2_{L^2(0,T;L^2(\Ga))}.
    \end{split}
\end{align*}
In view of the uniform estimate \eqref{est:energy} and the regularity of the velocity fields $\boldsymbol{v}$ and $\boldsymbol{w}$, we deduce that
\begin{align}\label{est:delt}
    \norm{\scp{\delt\phi_\ep}{\delt\psi_\ep}}_{L^2(0,T;(\mathcal{H}^1_{L,\beta})^\prime)} \leq C.
\end{align}
\textbf{Step 4.} We next establish $L^1$-bounds on $f_{1,\ep}(\phi_\ep)$ and $g_{1,\ep}(\psi_\ep)$. 
Therefore, we define
\begin{align*}
    m_P \coloneqq \text{mean}(\phi_0,\psi_0), \quad\text{and}\quad m_C \coloneqq \text{mean}(\mu_\ep,\theta_\ep).
\end{align*}
In the following, we will only consider the case $L\in[0,\infty)$. The proof in the case $L=\infty$ works similarly, but one uses the means $\meano{\phi_0}$ and $\meang{\psi_0}$ as well as $\meano{\mu_\ep}$ and $\meang{\theta_\ep}$ instead of the ones defined above. This distinction is related to the mass conservation law \eqref{MCL:SING} and the conditions on the initial data \eqref{cond:init:mean:L} and \eqref{cond:init:mean:inf}. We refer to \cite{Colli2024} for more details if $K\in[0,\infty)$ and $L=\infty$. The case $K = L = \infty$ is similar, but easier. 

To handle the cases $K = 0$ and $K\in(0,\infty]$ simultaneously, we introduce the the notation
\begin{align*}
    \gamma(K) \coloneqq \begin{cases}
        0, &\text{if~} K\in(0,\infty], \\
        1, &\text{if~} K = 0.
    \end{cases}
\end{align*}
We now test \eqref{WF:MT:SING} with
\begin{align*}
    \scp{\eta}{\eta_\Ga} \coloneqq\begin{cases}
        \scp{\phi_\ep - \beta m_P}{\psi_\ep - m_P}, \quad&\text{if~} K\in(0,\infty], \\
        \scp{\phi_\ep - \alpha\beta m_P}{\psi_\ep - \beta m_P}, &\text{if~} K = 0,
    \end{cases}
\end{align*}
which clearly belongs to $\mathcal{H}^1_{K,\alpha}$. After some rearrangements, as well as adding and subtracting $\beta m_P$ and $m_P$ multiple times in the case $K = 0$, we obtain
\begin{align}
    \begin{split}
        &\norm{\Grad\phi_\ep}^2_{L^2(\Om)} + \intO f_{1,\ep}(\phi_\ep)(\phi_\ep - \beta m_P)\dx 
        \\
        &\qquad
        + \norm{\Gradg\psi_\ep}^2_{L^2(\Ga)} + \intG g_{1,\ep}(\psi_\ep)(\psi_\ep - m_P)\dG \\
        &\quad = \intO (\mu_\ep - \beta m_C)(\phi_\ep - \beta m_P)\dx + \intG (\theta_\ep - m_C)(\psi_\ep - m_P) \dG \label{jonas6}\\
        &\qquad - \sigma(K)\intG (\alpha\psi_\ep - \phi_\ep)\big(\alpha\psi_\ep - \phi_\ep - (\alpha\beta m_P - m_P)\big)\dG  \\
        &\qquad - \intO f_2(\phi_\ep)(\phi_\ep - \beta m_P) \dx - \intG g_2(\psi_\ep)(\psi_\ep - m_P) \dG  
        \\
        &\qquad + \gamma(K) \Bigg( \intO \mu_\ep(\beta m_P - \alpha\beta m_P) \dx + \intG \theta_\ep(m_P - \beta m_P)\dG \\
        &\qquad\qquad- \intO F_\ep^\prime(\phi_\ep)(\beta m_P - \alpha\beta m_P)\dx - \intG G^\prime_\ep(\psi_\ep)(m_P - \beta m_P)\dG \Bigg).
    \end{split}
\end{align}
Note that the subtracted mean values $\beta m_C$ and $m_C$ in the first two summands on the right-hand side of \eqref{jonas6} do not change the values of the respective integrals, since
\begin{align*}
    \beta\abs{\Om}\meano{\phi_\ep - \beta m_P} + \abs{\Ga}\meang{\psi_\ep - m_P} = 0
\end{align*}
due to the mass conservation law \eqref{MCL:SING}. 

For the left-hand side of \eqref{jonas6}, we can use the Miranville--Zelik inequality (see \cite[Appendix A.1]{Miranville2004} or \cite[p.~908]{Gilardi2009}) due to the assumption that $\beta m_P$ and $m_P$ lie in the interior of the effective domains $D(f_1)$ and $D(g_1)$, respectively, see \eqref{cond:init:mean:L}. Thus, there exist positive constants $c_1,c_2$ and a non-negative constant $c_3$ such that
\begin{align}\label{est:mz}
    \begin{split}
        &c_1 \norm{f_{1,\ep}(\phi_\ep)}_{L^1(\Om)} + c_2\norm{g_{1,\ep}(\psi_\ep)}_{L^1(\Ga)} - c_3 \\
        &\quad \leq \intO f_{1,\ep}(\phi_\ep)(\phi_\ep - \beta m_P)\dx + \intG g_{1,\ep}(\psi_\ep)(\psi_\ep - m_P)\dG.
    \end{split}
\end{align}
On the other hand, we use the bulk-surface Poincar\'{e} inequality stated in \ref{PRELIM:POINCINEQ} for the first two terms on the right-hand side of \eqref{jonas6} to find that
\begin{align}\label{jonas7}
    \begin{split}
        &\intO (\mu_\ep - \beta m_C)(\phi_\ep - \beta m_P)\dx + \intG (\theta_\ep - m_C)(\psi_\ep - m_P) \dG \\
        &\quad\leq C\big(1 + \norm{\scp{\phi_\ep}{\psi_\ep}}_{\mathcal{L}^2}\big)\norm{\scp{\mu_\ep}{\theta_\ep}}_{L,\beta}.
    \end{split}
\end{align}
Furthermore, in view of the Lipschitz continuity of $f_2$ and $g_2$, we infer that
\begin{align*}
    \begin{split}
        &-\intO f_2(\phi_\ep)(\phi_\ep - \beta m_P)\dx - \intG g_2(\psi_\ep)(\psi_\ep - m_P)\dG \\
        &\quad =  - \intO\big(f_2(\phi_\ep) - f_2(\beta m_P)\big)(\phi_\ep - \beta m_P)\dx \\
        &\qquad - \intG \big(g_2(\psi_\ep) - g_2(m_P)\big)(\psi_\ep - m_P)\dG \\
        &\qquad - \intO f_2(\beta m_P)(\phi_\ep - \beta m_P)\dx - \intG g_2(m_P)(\psi_\ep - m_P)\dG \\
        &\quad\leq C + C\intO \abs{\phi_\ep - \beta m_P}^2\dx + C\intG \abs{\psi_\ep - m_P}^2\dG \\
        &\quad\leq C\Big(1 + \norm{\phi_\ep}^2_{L^2(\Om)} + \norm{\psi_\ep}^2_{L^2(\Ga)}\Big),
    \end{split}
\end{align*}
and thus
\begin{align}\label{jonas8}
    \begin{split}
        &-\sigma(K)\intG (\alpha\psi_\ep - \phi_\ep)\big(\alpha\psi_\ep - \phi_\ep - (\alpha\beta m_P - m_P)\big)\dG - \intO f_2(\phi_\ep)(\phi_\ep - \beta m_P)\dx \\
        &\quad - \intG g_2(\psi_\ep)(\psi_\ep - m_P)\dG \leq C\Big(1 + \norm{\phi_\ep}^2_{H^1(\Om)} + \norm{\psi_\ep}^2_{L^2(\Ga)}\Big).
    \end{split}
\end{align}
For the remaining terms on the right-hand side of \eqref{jonas6}, which are only present if $K=0$, we make use of the additional boundary regularity. In this case, we know that $\scp{\phi_\ep}{\psi_\ep}\in L^2(0,T;\mathcal{H}^2)$ and that the equations \eqref{STRG:EP} are satisfied. Hence, after integration by parts, we deduce that
\begin{align}\label{jonas9}
    \begin{split}
        &\intO (F_\ep^\prime(\phi_\ep) - \mu_\ep)(\alpha\beta m_P - \beta m_P)\dx + \intG (G_\ep^\prime(\psi_\ep) - \theta_\ep)(\beta m_P - m_P)\dG \\
        &\quad =  \intO \Lap\phi_\ep(\alpha\beta m_P - \beta m_P)\dx + \intG (\Lapg\psi_\ep - \alpha\deln\phi_\ep)(\beta m_P - m_P)\dG \\
        &\quad = \intG \deln\phi_\ep(\alpha m_P - \beta m_P)\dG \\
        &\quad\leq C\norm{\deln\phi_\ep}_{L^2(\Ga)}.
    \end{split}
\end{align}
Collecting \eqref{jonas6}-\eqref{jonas9}, we find that
\begin{align}\label{jonas10}
    \norm{f_{1,\ep}(\phi_\ep(t))}_{L^1(\Om)} + \norm{g_{1,\ep}(\psi_\ep(t))}_{L^1(\Ga)} \leq \Phi_\ep(t) + C\gamma(K)\norm{\deln\phi_\ep(t)}_{L^2(\Ga)},
\end{align}
for almost all $t\in[0,T]$, where $t\mapsto \Phi_\ep(t)$ denotes a function in $L^2(0,T)$, which satisfies
\begin{align}\label{uniform-Phi}
    \norm{\Phi_\ep}_{L^2(0,T)} \leq C,
\end{align}
(cf. the right-hand side of \eqref{jonas7} and \eqref{jonas8}).

To eventually pass to the limit $\ep\rightarrow 0$ in the weak formulation \eqref{SING:WF}, estimate \eqref{jonas10} is not sufficient. To this end, we prove $L^2$-bounds for the terms $f_{1,\ep}(\phi_\ep)$ and $g_{1,\ep}(\psi_\ep)$. For this, we take advantage of the domination property \eqref{domination} in the case $K\in[0,\infty)$ as well as the additional boundary regularity for $K=0$. This leads to the fact that the related analysis has to be performed differently for the cases $K\in(0,\infty]$ and $K=0$. 

\textbf{Step 5.} We first derive $L^2$-bounds of $f_{1,\ep}(\phi_\ep)$ and $g_{1,\ep}(\psi_\ep)$ in the case $K\in(0,\infty]$. 

In this case, we have $\gamma(K) = 0$, and thus due to \eqref{jonas10}, it holds
\begin{align}\label{jonas11}
    \norm{f_{1,\ep}(\phi_\ep)}_{L^2(0,T;L^1(\Om))} + \norm{g_{1,\ep}(\psi_\ep)}_{L^2(0,T;L^1(\Ga))} \leq C.
\end{align}
Furthermore, since both $f_2$ and $g_2$ are Lipschitz continuous, exploiting the estimate \eqref{est:energy} yields
\begin{align}\label{jonas12}
    \norm{F_\ep^\prime(\phi_\ep)}_{L^2(0,T;L^1(\Om))} + \norm{G_\ep^\prime(\psi_\ep)}_{L^2(0,T;L^1(\Ga))} \leq C.
\end{align}
Now, since $K\in(0,\infty]$, we have $\mathcal{H}^1_{K,\alpha} = \mathcal{H}^1$ and may test \eqref{WF:MT:SING} with $\big(\beta^2\abs{\Om} + \abs{\Ga}\big)^{-1}\scp{\beta}{1}$. Hence, \eqref{est:energy} and \eqref{jonas12} imply that
\begin{align}\label{est:mt-mean}
    \norm{\mean{\mu_\ep}{\theta_\ep}}_{L^2(0,T)} \leq C.
\end{align}
Whence, using \eqref{est:energy} and the bulk-surface Poincar\'{e} inequality once again, we infer
\begin{align}\label{est:mt-L2}
    \norm{\scp{\mu_\ep}{\theta_\ep}}_{L^2(0,T;\mathcal{L}^2)} \leq C.
\end{align}
Since $f_{1,\ep}$ and $g_{1,\ep}$ are Lipschitz continuous, a chain rule for the combination of Lipschitz and Sobolev functions (see, e.g., \cite[Corollary~3.2]{Ziemer}) implies that $\scp{f_{1,\ep}(\phi_\ep)}{g_{1,\ep}(\psi_\ep)}\in\mathcal{H}^1$ 
with%
\begin{alignat*}{2}
    \Grad \big(f_{1,\ep}(\phi_\ep)\big) &= f_{1,\ep}'(\phi_\ep) \Grad\phi_\ep 
    &&\quad\text{a.e.~in $\Omega$},
    \\
    \Gradg \big(g_{1,\ep}(\psi_\ep)\big) &= g_{1,\ep}'(\psi_\ep) \Gradg\psi_\ep
    &&\quad\text{a.e.~on $\Gamma$}.
\end{alignat*}
Testing \eqref{WF:MT:SING} with $\scp{f_{1,\ep}(\phi_\ep)}{g_{1,\ep}(\psi_\ep)}$ yields
\begin{align}\label{jonas13}
    \begin{split}
        &\norm{f_{1,\ep}(\phi_\ep)}^2_{L^2(\Om)} + \norm{g_{1,\ep}(\psi_\ep)}^2_{L^2(\Ga)} 
        \\
        &\qquad
        + \intO f_{1,\ep}'(\phi_\ep)\abs{\Grad\phi_\ep}^2\dx 
        + \intG g_{1,\ep}'(\psi_\ep)\abs{\Gradg\psi_\ep}^2 \dG \\
        &\quad = \intO \big(\mu_\ep - f_2(\phi_\ep)\big)f_{1,\ep}(\phi_\ep)\dx + \intG \big(\theta_\ep - g_2(\psi_\ep)\big)g_{1,\ep}(\psi_\ep)\dG \\
        &\qquad - \sigma(K)\intG (\alpha\psi_\ep - \phi_\ep)\big(\alpha g_{1,\ep}(\psi_\ep) - f_{1,\ep}(\phi_\ep)\big)\dG.
    \end{split}
\end{align}
First, note that due to the monotonicity of $f_{1,\ep}$ and $g_{1,\ep}$, the third and the fourth term on the left-hand side of \eqref{jonas13} are non-negative.

Then, for the first two terms on the right-hand side of \eqref{jonas13} we use the Lipschitz continuity of $f_2$ and $g_2$ to obtain
\begin{align}\label{jonas15}
    \begin{split}
        &\intO \big(\mu_\ep - f_2(\phi_\ep)\big)f_{1,\ep}(\phi_\ep)\dx + \intG \big(\theta_\ep - g_2(\psi_\ep)\big)g_{1,\ep}(\psi_\ep)\dG \\
        &\quad\leq \frac12\norm{f_{1,\ep}(\phi_\ep)}^2_{L^2(\Om)} + \frac14\norm{g_{1,\ep}(\psi_\ep)}^2_{L^2(\Ga)} \\
        &\qquad + C\Big(1 + \norm{\phi_\ep}^2_{L^2(\Om)} + \norm{\psi_\ep}^2_{L^2(\Ga)} + \norm{\mu_\ep}^2_{L^2(\Om)} + \norm{\theta_\ep}^2_{L^2(\Ga)}\Big).
    \end{split}
\end{align}
If $K=\infty$, \eqref{jonas15} already implies
\begin{align}\label{fg-L^2}
    \norm{f_{1,\ep}(\phi_\ep)}_{L^2(0,T;L^2(\Om))} + \norm{g_{1,\ep}(\psi_\ep)}_{L^2(0,T;L^2(\Ga))} \leq C
\end{align}
by virtue of \eqref{jonas13} and \eqref{est:energy}. In the case $K\in(0,\infty)$, we have to deal with the additional boundary term in \eqref{jonas13}, which is done with the help of the domination property \eqref{domination-reg} and the monotonicity of $f_{1,\ep}$. Namely, it holds that
\begin{align*}
    \begin{split}
        &-\frac1K\intG (\alpha\psi_\ep - \phi_\ep)(\alpha g_{1,\ep}(\psi_\ep) - f_{1,\ep}(\phi_\ep))\dG \\
        &\quad =  -\frac1K \intG (\alpha\psi_\ep - \phi_\ep)(f_{1,\ep}(\alpha\psi_\ep) - f_{1,\ep}(\phi_\ep))\dG \\
        &\qquad - \frac1K \intG (\alpha\psi_\ep - \phi_\ep)(\alpha g_{1,\ep}(\psi_\ep) - f_{1,\ep}(\alpha\psi_\ep))\dG \\
        &\quad \leq -\frac1K \intG (\alpha\psi_\ep - \phi_\ep)(\alpha g_{1,\ep}(\psi_\ep) - f_{1,\ep}(\alpha\psi_\ep))\dG \\
        &\quad\leq \frac{\abs{\alpha} + \kappa_1}{K}\intG \abs{\alpha\psi_\ep - \phi_\ep}\abs{g_{1,\ep}(\psi_\ep)} \dG + \frac{\kappa_2}{K} \intG\abs{\alpha\psi_\ep - \phi_\ep}\dG \\
        &\quad\leq \frac14 \norm{g_{1,\ep}(\psi_\ep)}^2_{L^2(\Ga)} + C\Big(\norm{\phi_\ep}^2_{H^1(\Om)} + \norm{\psi_\ep}^2_{L^2(\Ga)}\Big),
    \end{split}
\end{align*}
from which we deduce \eqref{fg-L^2} as well. 

\textbf{Step 6.} We now derive $L^2$-bounds of $f_{1,\ep}(\phi_\ep)$ and $g_{1,\ep}(\psi_\ep)$ in the case $K = 0$. 

In this case, we multiply \eqref{STRG:MU} and \eqref{STRG:THETA} by $\big(\beta^2\abs{\Om} + \abs{\Ga}\big)^{-1}\beta$ and $\big(\beta^2\abs{\Om} + \abs{\Ga}\big)^{-1}$ and integrate over $\Om$ and $\Ga$, respectively. After adding the resulting equations and applying integration by parts, we infer
\begin{align}\label{jonas16}
    \abs{\mean{\mu_\ep}{\theta_\ep}} \leq C\Big(\norm{F^\prime_\ep(\phi_\ep)}_{L^1(\Om)} + \norm{G^\prime_\ep(\psi_\ep)}_{L^1(\Ga)}\Big).
\end{align}
Exploiting the Lipschitz continuity of $F_2^\prime$ and $G_2^\prime$, respectively, as well as \eqref{est:energy} and \eqref{jonas10}, we deduce from \eqref{jonas16} that
\begin{align}\label{jonas17}
    \abs{\mean{\mu_\ep}{\theta_\ep}} \leq C\Big( 1 + \Phi_\ep + \norm{\deln\phi_\ep}_{L^2(\Ga)}\Big).
\end{align}
Thus, after another application of the bulk-surface Poincar\'{e} inequality, we arrive at
\begin{align}\label{jonas18}
    \norm{\mu_\ep}_{H^1(\Om)} + \norm{\theta_\ep}_{H^1(\Ga)} \leq C\Big( \norm{\Grad\mu_\ep}_{\mathbf{L}^2(\Om)} + \norm{\Gradg\theta_\ep}_{\mathbf{L}^2(\Ga)} + \Phi_\ep + \norm{\deln\phi_\ep}_{L^2(\Ga)}\Big).
\end{align}
Now, in view of the estimate \eqref{est:energy}, we can update the function $\Phi_\ep$ and conclude
\begin{align}\label{jonas19}
    \norm{\mu_\ep}_{H^1(\Om)} + \norm{\theta_\ep}_{H^1(\Ga)} \leq C\Big(\Phi_\ep + \norm{\deln\phi_\ep}_{L^2(\Ga)}\Big).
\end{align}
Next, we multiply \eqref{STRG:MU:EP} by $f_{1,\ep}(\phi_\ep)$, integrate over $\Om$, and integrate by parts. This yields
\begin{align}\label{jonas20}
    \begin{split}
        &\norm{f_{1,\ep}(\phi_\ep)}_{L^2(\Om)}^2 + \int_\Om \abs{\Grad\phi_\ep}^2 f_{1,\ep}^\prime(\phi_\ep)\dx \\ 
        &\quad = \intO \big(\mu_\ep - F_2^\prime(\phi_\ep)\big)f_{1,\ep}(\phi_\ep)\dx + \intG \deln\phi_\ep f_{1,\ep}(\phi_\ep)\dG \\
        &\quad \leq \frac12 \norm{f_{1,\ep}(\phi_\ep)}_{L^2(\Om)}^2 + \frac12 \intO \abs{\mu_\ep - F_2^\prime(\phi_\ep)}^2 \dx + \intG \deln\phi_\ep f_{1,\ep}(\phi_\ep)\dG.
    \end{split}
\end{align}
Similarly, after multiplying \eqref{STRG:THETA:EP} by $-g_{1,\ep}(\psi_\ep)$, integrating over $\Ga$ and applying integration by parts, we obtain
\begin{align}\label{jonas21}
    \begin{split}
        &\norm{g_{1,\ep}(\psi_\ep)}_{L^2(\Ga)}^2 + \intG \abs{\Gradg\psi_\ep}^2 g_{1,\ep}^\prime(\psi_\ep)\dG \\
        &\quad = \intG \big(\theta_\ep - G_2^\prime(\psi_\ep)\big)g_{1,\ep}(\psi_\ep)\dG - \intG \alpha\deln\phi_\ep g_{1,\ep}(\psi_\ep)\dG \\
        &\quad\leq \frac14 \norm{g_{1,\ep}(\psi_\ep)}_{L^2(\Ga)}^2  + \intG \abs{\theta_\ep - G_2^\prime(\psi_\ep)}^2 \dG - \intG \alpha\deln\phi_\ep g_{1,\ep}(\psi_\ep)\dG.
    \end{split}
\end{align}
Before adding \eqref{jonas20} and \eqref{jonas21}, note that due to the boundary condition $\phi_\ep = \alpha\psi_\ep$ a.e.~on $\Sigma$ and the domination property of the regularized potentials \eqref{domination-reg}, we find that
\begin{align}\label{jonas22}
    \begin{split}
        &\abs{\intG \deln\phi_\ep f_{1,\ep}(\phi_\ep)\dG - \intG\alpha\deln\phi_\ep g_{1,\ep}(\psi_\ep)\dG} \\
        &\quad\leq\norm{\deln\phi_\ep}_{L^2(\Ga)}\norm{f_{1,\ep}(\alpha\psi_\ep) - \alpha g_{1,\ep}(\psi_\ep)}_{L^2(\Ga)} \\
        &\quad\leq C\norm{\deln\phi_\ep}_{L^2(\Ga)}\big( 1 + \norm{g_{1,\ep}(\psi_\ep)}_{L^2(\Ga)}\big) \\
        &\quad\leq \frac14 \norm{g_{1,\ep}(\psi_\ep)}_{L^2(\Ga)}^2 + C\big(1 + \norm{\deln\phi_\ep}_{L^2(\Ga)}^2\big).
    \end{split}
\end{align}
Hence, combining \eqref{jonas20}-\eqref{jonas22} along with the respective monotonicity of $f_{1,\ep}$ and $g_{1,\ep}$ as well as the respective Lipschitz continuity of $F_2^\prime$ and $G_2^\prime$, we conclude
\begin{align*}
    \begin{split}
        \norm{f_{1,\ep}(\phi_\ep)}_{L^2(\Om)}^2 + \norm{g_{1,\ep}(\psi_\ep)}_{L^2(\Ga)}^2 &\leq C\Big( 1 + \norm{\phi_\ep}_{L^2(\Om)}^2 + \norm{\psi_\ep}_{L^2(\Ga)}^2  + \norm{\mu_\ep}_{L^2(\Om)}^2 \\
        &\qquad + \norm{\theta_\ep}_{L^2(\Ga)}^2 + \norm{\deln\phi_\ep}_{L^2(\Ga)}^2\Big).
    \end{split}
\end{align*}
Consequently, we again find a function $\Phi_\ep$ satisfying \eqref{uniform-Phi} such that
\begin{align}\label{jonas23}
    \norm{f_{1,\ep}(\phi_\ep)}_{L^2(\Om)} + \norm{g_{1,\ep}(\psi_\ep)}_{L^2(\Ga)}  \leq C\big( 1 + \Phi_\ep + \norm{\deln\phi_\ep}_{L^2(\Ga)}\big).
\end{align}
Now, recalling \eqref{STRG:MU:EP}-\eqref{STRG:PHIPSI:EP}, we apply regularity theory for bulk-surface elliptic systems (see Proposition~\ref{Prop:Appendix} with $p=2$) to deduce that
\begin{align*}
    \begin{split}
    \norm{\phi_\ep}_{H^2(\Om)} + \norm{\psi_\ep}_{H^2(\Ga)}&\leq C\big(\norm{\mu_\ep - F_\ep^\prime(\phi_\ep)}_{L^2(\Om)} + \norm{\theta_\ep - G_\ep^\prime(\psi_\ep)}_{L^2(\Ga)}\big). 
    \end{split}
\end{align*}
Now, in view of \eqref{est:energy}, \eqref{jonas19} and \eqref{jonas23}, and after possibly redefining $\Phi_\ep$, we infer
\begin{align}\label{jonas24}
    \norm{\phi_\ep}_{H^2(\Om)} + \norm{\psi_\ep}_{H^2(\Ga)} \leq \Phi_\ep + C\norm{\deln\phi_\ep}_{L^2(\Ga)}.
\end{align}
Lastly, we establish a uniform $L^2$-bound of $\deln\phi_\ep$. To this end, by the trace theorem (see, e.g., \cite[Chapter~2, Theorem~2.24]{Brezzi1987}), we find $3/2 < s < 2$ and $C_s > 0$ such that
\begin{align}\label{est:trace}
    \norm{\deln\phi_\ep}_{L^2(\Ga)} \leq C_s\norm{\phi_\ep}_{H^s(\Om)}.
\end{align}
Thus, combining \eqref{jonas24} and \eqref{est:trace}, we exploit the compact embeddings $H^2(\Om)\emb H^s(\Om)\emb L^2(\Om)$ along with Ehrling's lemma to obtain 
\begin{align}\label{est:delnphi}
    \begin{split}
        &\norm{\phi_\ep}_{H^2(\Om)} + \norm{\psi_\ep}_{H^2(\Ga)} + \norm{\deln\phi_\ep}_{L^2(\Ga)} 
        \\[1ex]
        &\quad\leq \Phi_\ep + C\norm{\phi_\ep}_{H^s(\Om)} \\
        &\quad\leq \frac12\norm{\phi_\ep}_{H^2(\Om)} + \Phi_\ep + C\norm{\phi_\ep}_{L^2(\Om)}.
    \end{split}
\end{align}
Eventually, from \eqref{est:energy} and \eqref{est:delnphi}, we infer
\begin{align}\label{EST:PP:H^2:K0}
    \norm{\phi_\ep}_{L^2(0,T;H^2(\Om))} + \norm{\psi_\ep}_{L^2(0,T;H^2(\Ga))} + \norm{\deln\phi_\ep}_{L^2(0,T;L^2(\Ga))} \leq C,
\end{align}
and consequently, recalling \eqref{jonas19} and \eqref{jonas23}, we have
\begin{align}\label{est:fgmt:L^2}
    \begin{split}
    &\norm{f_{1,\ep}(\phi_\ep)}_{L^2(0,T;L^2(\Om))} + \norm{g_{1,\ep}(\psi_\ep)}_{L^2(0,T;L^2(\Ga))} 
    \\
    &\quad
    + \norm{\mu_\ep}_{L^2(0,T;H^1(\Om))} + \norm{\theta_\ep}_{L^2(0,T;H^1(\Ga))} \leq C.
    \end{split}
\end{align}

\textbf{Step 7.} We eventually pass to the limit $\ep\rightarrow 0$. 

In view of the estimates \eqref{est:energy}, \eqref{est:delt}, \eqref{est:mt-L2}, \eqref{fg-L^2} and \eqref{est:fgmt:L^2}, the Banach--Alaoglu theorem and the Aubin--Lions--Simon lemma imply the existence of functions $\phi, \psi, \mu, \theta, \xi$ and $\xi_\Ga$ such that
\begin{alignat}{3}
     \scp{\delt\phi_\ep}{\delt\psi_\ep} &\rightarrow \scp{\delt\phi}{\delt\psi} \qquad&&\text{weakly in~} L^2(0,T;(\mathcal{H}^1_{L,\beta})^\prime), \\
     \scp{\phi_\ep}{\psi_\ep} &\rightarrow \scp{\phi}{\psi} &&\text{weakly-star in~} L^\infty(0,T;\mathcal{H}^1_{K,\alpha}), \nonumber \\
     & &&\text{strongly in~} C([0,T];\mathcal{H}^s) \ \text{for any~} s\in[0,1), \label{CONV:PP} \\
     \scp{f_{1,\ep}(\phi_\ep)}{g_{1,\ep}(\psi_\ep)} &\rightarrow \scp{\xi}{\xi_\Ga} &&\text{weakly in~} L^2(0,T;\mathcal{L}^2), \label{CONV:fg}\\
     \scp{\mu_\ep}{\theta_\ep} &\rightarrow \scp{\mu}{\theta} \qquad&&\text{weakly in~} L^2(0,T;\mathcal{H}^1_{L,\beta}),
\end{alignat}
along a subsequence $\ep\rightarrow 0$, which will not be relabeled.
Arguing as in \cite[Theorem 3.2]{Knopf2024}, we can easily show that the above weak and strong convergences suffice to pass to the limit in the weak formulation \eqref{WF:PP:SING}-\eqref{WF:MT:SING} written for $f_1 = f_{1,\ep}$ and $g_1 = g_{1,\ep}$. Regarding Definition~\ref{DEF:SING:WS:WF}, we have to make sure that the inclusions
\begin{align}\label{WF:incl}
    \left\{
    \begin{aligned}
    \phi(x,t) &\in D(f_1) &&\;\text{for almost all $(x,t)\in Q$,}
    \qquad
    &\xi&\in f_1(\phi) &&\;\text{a.e.~in~} Q,
    \\
    \psi(z,t) &\in D(g_1) &&\;\text{for almost all $(z,t)\in \Sigma$,}
    \qquad
    &\xi_\Ga&\in g_1(\psi) &&\;\text{a.e.~on~} \Sigma
    \end{aligned}
    \right.
\end{align}
hold true. To this end, first note that due to the convergence \eqref{CONV:PP}-\eqref{CONV:fg} along with the weak-strong convergence principle, we infer
\begin{align*}
    \lim_{\ep\rightarrow 0}\int_0^T\intO f_{1,\ep}(\phi_\ep)\phi_\ep\dxt &= \int_0^T\intO\xi\phi\dxt, 
    \\
    \lim_{\ep\rightarrow 0}\int_0^T\intG g_{1,\ep}(\psi_\ep)\psi_\ep\dGt &= \int_0^T\intG\xi_\Ga\psi\dGt.
\end{align*}
The inclusions \eqref{WF:incl} then follow from the maximality of the monotone operators $f_1$ and $g_1$ (see \cite[Proposition 1.1, p.~42]{Barbu1976} and also \cite[Section~5.2]{Garcke2017}). Furthermore, it is straightforward to check that the mass conservation law as in Definition~\ref{DEF:SING:WS:MCL}  holds, due to the aforementioned strong convergences of $\phi_\ep$ and $\psi_\ep$. Lastly, the energy inequality in Definition~\ref{DEF:SING:WS:WEDL} can be established using the above convergences and lower semicontinuity arguments (cf.~\cite[Theorem 3.2]{Knopf2024} or \cite[Theorem 2.6]{Colli2024}).

\textbf{Step 8.} Finally, we establish higher regularity results. 

We have already seen that in the case $K=0$ we additionally have the bound
\begin{align*}
    \norm{\phi_\ep}_{L^2(0,T;H^2(\Om))} + \norm{\psi_\ep}_{L^2(0,T;H^2(\Ga))} \leq C,
\end{align*}
see \eqref{EST:PP:H^2:K0}. Another application of the Banach--Alaoglu theorem readily implies the desired regularity $\scp{\phi}{\psi}\in L^2(0,T;\mathcal{H}^2)$. 

In the case $K\in(0,\infty]$, we can argue similarly by exploiting the equations \eqref{STRG:MU:EP}-\eqref{STRG:PHIPSI:EP} and regularity theory for bulk-surface elliptic systems (see Proposition~\ref{Prop:Appendix} with $p=2$). 
For more details in the case $K\in(0,\infty)$ and $L=\infty$, we refer to \cite[Theorem 6.1]{Knopf2024}. 

The regularities stated in \eqref{REG:PP:W26} are a direct consequence of Proposition~\ref{Prop:FG:Linf} and Corollary~\ref{Cor:pp:W2r}, which will be presented in Subsection~\ref{SUBSECT:HIGHREG}.
\end{proof}

\section{Continuous dependence and uniqueness results}
\label{SECT:UNIQUENESS}

In this section, we present the proof of the result on continuous dependence and uniqueness of weak solutions to the system \eqref{EQ:SYSTEM} as stated in Theorem~\ref{THEOREM:UNIQUE:SING}.

\begin{proof}[Proof of Theorem~\ref{THEOREM:UNIQUE:SING}.]
    As the mobilities $m_\Om$ and $m_\Ga$ are supposed to be constant, we assume, without loss of generality, that $m_\Om \equiv m_\Ga \equiv 1$. In the following, $C$ denotes generic positive constants that may change their value from line to line and only depend on $\Om$, the parameters of the system \eqref{EQ:SYSTEM}, and the norms of the initial data.\\[0.3em]
    We consider two weak solutions $(\phi_1,\psi_1,\mu_1,\theta_1,\xi_1,\xi_{\Ga,1})$ and $(\phi_2,\psi_2,\mu_2,\theta_2,\xi_2,\xi_{\Ga,2})$ corresponding to the initial data $\scp{\phi_0^1}{\psi_0^1}, \scp{\phi_0^2}{\psi_0^2}\in\mathcal{H}^1_{K,\alpha}$ and the velocity fields $\boldsymbol{v}_1, \boldsymbol{v}_2\in L^2(0,T;\mathbf{L}_{\Div}^3(\Om))$ and $\boldsymbol{w}_1, \boldsymbol{w}_2\in L^2(0,T;\mathbf{L}_\tau^{2+\omega}(\Ga))$, respectively, and set
    \begin{align*}
        (\phi, \psi, \mu, \theta, \xi, \xi_\Ga) \coloneqq (\phi_2 - \phi_1, \psi_2 - \psi_1, \mu_2 - \mu_1, \theta_2 - \theta_1, \xi_2 - \xi_1, \xi_{\Ga,2} - \xi_{\Ga,1}),
    \end{align*}
    as well as
    \begin{align*}
        (\boldsymbol{v}, \boldsymbol{w}) = ( \boldsymbol{v}_2 - \boldsymbol{v}_1, \boldsymbol{w}_2 - \boldsymbol{w}_1).
    \end{align*}
    Then, repeating the line of argument presented in \cite[Theorem 3.5]{Knopf2024}, we can show that for any $\ep > 0$ it holds
    \begin{align}\label{est:pre:Gronwall}
        \begin{split}
            &\ddt\frac12\norm{\scp{\phi}{\psi}}_{L,\beta,\ast}^2 - C\ep\norm{\scp{\phi}{\psi}}_{K,\alpha}^2\\
            &\quad\leq\frac12\norm{\scp{\boldsymbol{v}}{\boldsymbol{w}}}_{\mathcal{Y}_\omega}^2 +  C\norm{\scp{\boldsymbol{v}_1}{\boldsymbol{w}_1}}_{\mathcal{Y}_\omega}^2\norm{\scp{\phi}{\psi}}_{L,\beta,\ast}^2 - \bigscp{\scp{\mu}{\theta}}{\scp{\phi}{\psi}}_{\mathcal{L}^2}
        \end{split}
    \end{align}
    almost everywhere on $[0,T]$, where we use the notation $\mathcal{Y}_\omega = \mathbf{L}^3(\Om)\times \mathbf{L}^{2+\omega}(\Ga)$. For the last term on the right-hand side of \eqref{est:pre:Gronwall}, we use the weak formulation for $\scp{\mu}{\theta}$ and obtain
    \begin{align*}
        \begin{split}
            -\bigscp{\scp{\mu}{\theta}}{\scp{\phi}{\psi}}_{\mathcal{L}^2} &= - \norm{\scp{\phi}{\psi}}_{K,\alpha}^2 - \intO \xi\,\phi + [F^\prime_2(\phi_2) - F^\prime_2(\phi_1)]\;\phi\dx \\
            &\quad - \intG \xi_\Ga\,\psi + [G^\prime_2(\psi_2) - G^\prime_2(\psi_1)]\;\psi\dG.
        \end{split}
    \end{align*}
    almost everywhere on $[0,T]$. Then, using the fact that $f_1$ and $g_1$ are maximal monotone graphs, we infer together with the Lipschitz continuity of $F_2^\prime$ and $G_2^\prime$ that
    \begin{align}\label{jonas75} 
        \begin{split}
            &-\intO \xi\,\phi + [F^\prime_2(\phi_2) - F^\prime_2(\phi_1)]\phi\dx - \intG \xi_\Ga\,\psi + [G^\prime_2(\psi_2) - G^\prime_2(\psi_1)]\psi \dG \\
            &\quad\leq C \norm{\scp{\phi}{\psi}}_{\mathcal{L}^2}^2.
        \end{split}
    \end{align}
    almost everywhere on $[0,T]$. Combining \eqref{est:pre:Gronwall} and \eqref{jonas75} along with an application of Ehrling's lemma yields that
    \begin{align*}
        &\ddt\frac12\norm{\scp{\phi}{\psi}}_{L,\beta,\ast}^2 + \frac12\norm{\scp{\phi}{\psi}}_{K,\alpha}^2 \\
        &\quad\leq \frac12\norm{\scp{\boldsymbol{v}}{\boldsymbol{w}}}_{\mathcal{Y}_\omega}^2 +  C\norm{\scp{\boldsymbol{v}_1}{\boldsymbol{w}_1}}_{\mathcal{Y}_\omega}^2\norm{\scp{\phi}{\psi}}_{L,\beta,\ast}^2
    \end{align*}
    almost everywhere on $[0,T]$, and then Gronwall's lemma readily entails the continuous dependence estimate \eqref{EST:continuous-dependence:sing}. 
    
    Then, the uniqueness for the phase-fields immediately follows by taking $\scp{\phi_0^1}{\psi_0^1} = \scp{\phi_0^2}{\psi_0^2}$ a.e.~in $\Om\times\Ga$, $\boldsymbol{v}_1 = \boldsymbol{v}_2$ a.e.~in $Q$ and $\boldsymbol{w}_1 = \boldsymbol{w}_2$ a.e.~on $\Sigma$. Next, assuming that the operators $f_1$ and $g_1$ are single-valued, the uniqueness of the phase-fields immediately implies that $\xi_1 = \xi_2$ a.e.~in $Q$ and $\xi_{\Ga,1} = \xi_{\Ga,2}$ a.e.~on $\Sigma$. Hence, from the weak formulation for $\scp{\mu}{\theta}$ we infer
    \begin{align*}
        \intO \mu\,\eta\dx + \intG \theta\,\eta_\Ga\dG = 0
    \end{align*}
    for all $\scp{\eta}{\eta_\Ga}\in\mathcal{H}^1_{K,\alpha}$. Thus, $\scp{\mu}{\theta} = 0$ a.e.~in $Q\times\Sigma$, which completes the proof.
\end{proof}

\section{Higher regularity and existence of strong solutions}
\label{SECT:HIGHREG}

\subsection{Higher regularity for weak solutions}
\label{SUBSECT:HIGHREG}

We first establish higher regularity of the potential terms.

\begin{proposition}\label{Prop:FG:Linf}
    Suppose that the assumptions of Theorem~\ref{THEOREM:EOWS} hold. Furthermore, assume that $\Om$ is of class $C^2$ and that \ref{S4} holds. Then
    \begin{align}
        F^\prime(\phi)&\in L^2(0,T;L^r(\Om)), \label{F':LinfLr}\\
        G^\prime(\psi)&\in L^2(0,T;L^s(\Ga)), \label{G':LinfLr}
    \end{align}
    where \eqref{F':LinfLr} holds for $r\in[1,6]$ if $d = 3$, and $r\in[1,\infty)$ if $d = 2$, and \eqref{G':LinfLr} holds for $s\in[1,\infty)$.
\end{proposition}

\begin{proof}
In this proof, without loss of generality, we assume $\alpha\neq 0$. The case $\alpha=0$ can be handled similarly and the computations are even easier.
As previously done, e.g., in \cite{Conti2020, Giorgini2017}, we consider for $k\in\N$ the truncation
\begin{align}\label{sigmak}
    \sigma_k:\R\rightarrow\R, \quad \sigma_k(\tau)\coloneqq \begin{cases}
        1 - \frac1k, &\tau\geq 1 - \frac1k, \\
        \tau, &\abs{\tau} < 1 - \frac1k, \\
        -1 + \frac1k, &\tau\leq -1 + \frac1k.
    \end{cases}
\end{align}
Now, for any $K\in[0,\infty]$, we define $\phi_k \coloneqq \alpha \,\sigma_k\circ(\alpha^{-1}\phi)$ and $\psi_k \coloneqq \sigma_k\circ\psi$.
To prove the assertions \eqref{F':LinfLr} and \eqref{G':LinfLr}, we need to handle the cases $K\in(0,\infty]$ and $K=0$ separately.

\textit{The case $K\in(0,\infty]$.}
Let $r\in[2,6]$ if $d = 3$ and $r\in[2,\infty)$ if $d = 2$. 
For $k\in\N$, we define
\begin{align}
    \label{DEF:ETAK}
    \eta_k &\coloneqq \abs{F_1^\prime(\phi_k)}^{r-2}F_1^\prime(\phi_k),  
    \\
    \label{DEF:ETAKG}
    \eta_{\Ga k} &\coloneqq \abs{G_1^\prime(\psi_k)}^{r-2}G_1^\prime(\psi_k),
\end{align}
where we use the convention $0^0=1$ in the case $r=2$.
We now fix $k\in\N$ and we multiply the equation
\begin{align*}
    \mu = -\Lap\phi + F^\prime(\phi) \quad\text{a.e.~in~} Q
\end{align*}
by $\eta_k$ and integrate over $\Om$, and we multiply
\begin{align*}
    \theta = -\Lapg\psi + G^\prime(\psi) + \alpha\deln\phi \quad\text{a.e.~on~} \Sigma
\end{align*}
by $\eta_{\Ga k}$ and integrate over $\Ga$. Applying integration by parts, and adding the resulting equations, yields by means of the boundary condition 
\begin{align*}
    \begin{cases} 
        K\deln\phi = \alpha\psi - \phi 
        &\text{if} \ K\in (0,\infty), \\
        \deln\phi = 0 
    &\text{if} \ K = \infty
    \end{cases} 
    && \text{a.e.~on} \ \Sigma,
\end{align*}
that
\begin{align}\label{testeq}
    \begin{split}
        &(r-1)\intO \Grad\phi\cdot\Grad\phi_k \abs{F_1^\prime(\phi_k)}^{r-2}F_1^{\prime\prime}(\phi_k) \dx \\
        &\qquad + (r-1)\intG \Gradg\psi\cdot\Gradg\psi_k \abs{G_1^\prime(\psi_k)}^{r-2}G_1^{\prime\prime}(\psi_k) \dG \\
        &\qquad + \intO F_1^\prime(\phi) F_1^\prime(\phi_k) \abs{F_1^\prime(\phi_k)}^{r-2}\dx + \intG G_1^\prime(\psi) G_1^\prime(\psi_k) \abs{G_1^\prime(\psi_k)}^{r-2}\dG \\
        &\quad = \intO \mu\, F_1^\prime(\phi_k)\abs{F_1^\prime(\phi_k)}^{r-2}\dx + \intG \theta\,G_1^\prime(\psi_k)\abs{G_1^\prime(\psi_k)}^{r-2}\dG \\
        &\qquad - \intO F_2^\prime(\phi) F_1^\prime(\phi_k)\abs{F_1^\prime(\phi_k)}^{r-2}\dx - \intG G_2^\prime(\psi) G_1^\prime(\psi_k)\abs{G_1^\prime(\psi_k)}^{r-2}\dG \\
        &\qquad - \sigma(K)\intG (\alpha\psi - \phi)\big(\alpha G_1^\prime(\psi_k)\abs{G_1^\prime(\psi_k)}^{r-2} - F_1^\prime(\phi_k)\abs{F_1^\prime(\phi_k)}^{r-2}\big)\dG \\
        &\quad\eqqcolon \sum_{j=1}^5 I_j 
    \end{split}
\end{align}
almost everywhere on $[0,T]$. Since $F_1$ and $G_1$ are convex, we infer that the first two terms on the left-hand side of \eqref{testeq} are non-negative. 
Moreover, we infer from assumption \ref{S4} that $F_1^\prime(\tau)$ and $F_1^\prime(\alpha\sigma_k(\alpha^{-1}\tau))$ have the same sign for all $\tau\in(-1,1)$. In combination with \eqref{sigmak}, this yields
\begin{align}\label{est:Fk:squared}
    F_1^\prime(\phi_k)^2\leq F_1^\prime(\phi) F_1^\prime(\phi_k)\quad\text{a.e.~in~} Q.
\end{align}
Analogously, we obtain
\begin{align}\label{est:Gk:squared}
    G_1^\prime(\psi_k)^2\leq G_1^\prime(\psi) G_1^\prime(\psi_k)\quad\text{a.e.~on~}\Sigma.
\end{align}
Hence, we conclude that
\begin{align}\label{est:Lr}
    \begin{split}
        &\intO \abs{F_1^\prime(\phi_k)}^r\dx + \intG \abs{G_1^\prime(\psi_k)}^r\dG \\
        &\quad\leq \intO F_1^\prime(\phi) F_1^\prime(\phi_k) \abs{F_1^\prime(\phi_k)}^{r-2}\dx + \intG G_1^\prime(\psi) G_1^\prime(\psi_k) \abs{G_1^\prime(\psi_k)}^{r-2}\dG\\
        &\quad\le \sum_{j=1}^5 I_j 
    \end{split}
\end{align}
almost everywhere on $[0,T]$. Now, we intend to bound the terms $I_j$ for $j=1,\ldots,5$. Due to the Lipschitz continuity of $F_2^\prime$ and $G_2^\prime$, we find
\begin{align}
    \label{est:I13}
    I_1 + I_3 \leq \intO \big(\abs{\mu} + d_F\abs{\phi}\big)\abs{F_1^\prime(\phi_k)}^{r-1}\dx,
\end{align}
as well as
\begin{align}
    \label{est:I24}
    I_2 + I_4 \leq \intG \big(\abs{\theta} + d_G\abs{\psi}\big)\abs{G_1^\prime(\psi_k)}^{r-1}\dG,
\end{align}
almost everywhere on $[0,T]$ for some constants $d_F$ and $d_G$ depending only on $F$ and $G$, respectively. 

To handle the term $I_5$, which is only present if $K\in (0,\infty)$, we first notice that \ref{S4} implies
\begin{alignat*}{2}
    \phi(x,t) &\in D(F_1^\prime) \subset [-1,1]
    &&\quad\text{for almost all $(x,t)\in Q$,}
    \\
    \psi(z,t) &\in D(G_1^\prime) \subset [-1,1]
    &&\quad\text{for almost all $(z,t)\in \Sigma$.}
\end{alignat*}
Recalling the domination property \eqref{domination} and the definition of $\phi_k$, and exploiting the monotonicity of the function $\R\ni \tau\mapsto F_1^\prime(\alpha\sigma_k(\tau))\abs{F_1^\prime(\alpha\sigma_k(\tau))}^{r-2}$, we obtain
\begin{align}\label{est:I5}
    \begin{split}
        I_5 &= - \alpha\sigma(K) \intG (\psi - \alpha^{-1}\phi)\big(F_1^\prime(\alpha\psi_k)\abs{F_1^\prime(\alpha\psi_k)}^{r-2} - F_1^\prime(\phi_k)\abs{F_1^\prime(\phi_k)}^{r-2}\big)\dG \\
        &\quad - \sigma(K) \intG (\alpha\psi - \phi)\big(\alpha G_1^\prime(\psi_k)\abs{G_1^\prime(\psi_k)}^{r-2} - F_1^\prime(\alpha\psi_k)\abs{F_1^\prime(\alpha\psi_k)}^{r-2}\big)\dG \\
        &\leq 2\sigma(K)\Big(\intG \abs{G_1^\prime(\psi_k)}^{r-1}\dG + 2^{r-2}\kappa_1^{r-1}\intG \abs{G_1^\prime(\psi_k)}^{r-1}\dG + 2^{r-2}\kappa_2^{r-1}\abs{\Ga}\Big)
    \end{split}
\end{align}
almost everywhere on $[0,T]$. Here, we additionally made use of the Minkowski inequality
\begin{align*}
    \abs{a + b}^{r-1} \leq 2^{r-2}\big(\abs{a}^{r-1} + \abs{b}^{r-1}\big)
    \quad\text{for any $a,b\in\R$.}
\end{align*}
Thus, by collecting the above estimates \eqref{est:Lr}-\eqref{est:I24} and \eqref{est:I5} in combination with \eqref{testeq}, we employ Hölder's and Young's inequality to deduce
\begin{align*}
    \begin{split}
        &\norm{F_1^\prime(\phi_k)}_{L^r(\Om)}^r + \norm{G_1^\prime(\psi_k)}_{L^r(\Ga)}^r 
        \\
        &\leq\Big(\norm{\mu}_{L^r(\Om)} + d_F\norm{\phi}_{L^r(\Om)}\Big)\norm{F_1^\prime(\phi_k)}_{L^r(\Om)}^{r-1} 
        + 2^{r-1}\kappa^{r-1}_2\abs{\Ga}\sigma(K)
        \\
        &\quad + \Big(2\sigma(K) + 2^{r-1}\kappa_1^{r-1}\sigma(K) + \norm{\theta}_{L^r(\Ga)} + d_G\norm{\psi}_{L^r(\Ga)}\Big)\norm{G_1^\prime(\psi_k)}_{L^r(\Ga)}^{r-1} \\
        &\leq \frac12\norm{F_1^\prime(\phi_k)}_{L^r(\Om)}^r + \frac12\norm{G_1^\prime(\psi_k)}_{L^r(\Ga)}^r + \frac1r\left(\frac{2(r-1)}{r}\right)^{r-1}\Big(\norm{\mu}_{L^r(\Om)} + d_F\norm{\phi}_{L^r(\Om)}\Big)^r\\
        &\quad + \frac1r\left(\frac{2(r-1)}{r}\right)^{r-1}
        \Big(2\sigma(K) + 2^{r-1}\kappa_1^{r-1}\sigma(K) + \norm{\theta}_{L^r(\Ga)} + d_G\norm{\psi}_{L^r(\Ga)}\Big)^r \\
        &\quad + 2^{r-1}\kappa^{r-1}_2\abs{\Ga}\sigma(K)
    \end{split}
\end{align*}
almost everywhere on $[0,T]$. Thus, absorbing the respective terms and taking the $r$-th root, we find
\allowdisplaybreaks[0]
\begin{align}\label{Est:FG:L^r:K>0}
        &\norm{F_1^\prime(\phi_k)}_{L^r(\Om)} + \norm{G_1^\prime(\psi_k)}_{L^r(\Ga)} \nonumber \\
        &\leq 2\cdot2^{1/r}\Bigg[\frac{1}{r^{1/r}}\left(\frac{2(r-1)}{r}\right)^{(r-1)/r}\Big(\norm{\mu}_{L^r(\Om)} + d_F\norm{\phi}_{L^r(\Om)}\Big) \nonumber\\
        &\quad + \frac{1}{r^{1/r}}\left(\frac{2(r-1)}{r}\right)^{(r-1)/r}\Big(2\sigma(K) + 2^{r-1}\kappa_1^{r-1}\sigma(K) + \norm{\theta}_{L^r(\Ga)} + d_G\norm{\psi}_{L^r(\Ga)}\Big) \\
        &\quad + 2^{(r-1)/r}\kappa_2^{(r-1)/r}\abs{\Ga}^{1/r}\sigma(K)^{1/r}\Bigg]  
        \nonumber\\
        &\leq C\Big(1 + \norm{\mu}_{L^r(\Om)} + \norm{\theta}_{L^r(\Ga)} + d_F\norm{\phi}_{L^r(\Om)} + d_G\norm{\psi}_{L^r(\Ga)}\Big)\nonumber
\end{align}
\allowdisplaybreaks
almost everywhere on $[0,T]$ for a constant $C = C(r)>0$ that may depend on $r$. Exploiting the regularites $\phi,\mu\in L^2(0,T;H^1(\Om))$ as well as $\psi,\theta\in L^2(0,T;H^1(\Ga))$ along with Sobolev embeddings we infer \eqref{F':LinfLr} and \eqref{G':LinfLr} for $r,s\in[2,6]$ if $d = 3$ and $r,s\in[2,\infty)$ if $d = 2$. The cases $r,s\in[1,2)$ can be obtained by means of Hölder's inequality. 

We still need to verify \eqref{G':LinfLr} in the case $s>6$ if $d=3$. Therefore, let $s>6$ be arbitrary. Testing
\begin{align*}
    \theta = -\Lapg\psi + G^\prime(\psi) + \alpha\deln\phi\qquad\text{a.e. on~}\Sigma
\end{align*}
with $\eta_{\Ga k}$ as introduced in \eqref{DEF:ETAKG}
and applying integration by parts, we deduce
\begin{align}\label{testeq:Gk}
    \begin{split}
        &(s-1)\intG\Gradg\psi\cdot\Gradg\psi_k\abs{G_1^\prime(\psi_k)}^{s-2}G_1^{\prime\prime}(\psi_k)\dG \\
        &\qquad + \intG G_1^\prime(\psi)G_1^\prime(\psi_k)\abs{G_1^\prime(\psi_k)}^{s-2}\dG \\
        &\quad = \intG\theta\;G_1^\prime(\psi_k)\abs{G_1^\prime(\psi_k)}^{s-2}\dG - \intG G_2^\prime(\psi)G_1^\prime(\psi_k)\abs{G_1^\prime(\psi_k)}^{s-2}\dG \\
        &\qquad - \alpha\sigma(K)\intG(\alpha\psi - \phi) G_1^\prime(\psi_k)\abs{G_1^\prime(\psi_k)}^{s-2}\dG
    \end{split}
\end{align}
almost everywhere on $[0,T]$.
Here, we also used that $K\deln\phi = \alpha\psi-\phi$ holds almost everywhere on $\Sigma$. Proceeding similarly as above, we derive the estimate
\begin{align}\label{est:Gk:r:pre:1}
    \norm{G_1^\prime(\psi_k)}_{L^s(\Ga)}^s 
    \leq C \big(1+ \norm{\theta}_{L^s(\Ga)} + d_G\norm{\psi}_{L^s(\Ga)}\big)\norm{G_1^\prime(\psi_k)}_{L^s(\Ga)}^{s-1}.
\end{align}
almost everywhere on $[0,T]$. Due to the regularities $\psi,\theta\in L^2(0,T;H^1(\Ga))$ and the Sobolev embedding $H^1(\Ga) \emb L^s(\Ga)$ for any $s\in[1,\infty)$, this implies that \eqref{G':LinfLr} holds true if $s>6$. This completes the proof in the case $K\in(0,\infty]$.

\textit{The case $K=0$.}
First, let $2\leq r \leq 4$ be arbitrary. Performing the same testing procedure as in the case $K\in(0,\infty]$, we obtain
\begin{align}\label{Est:FG:k:0}
    \begin{split}
        &(r-1)\intO \Grad\phi\cdot\Grad\phi_k\abs{F_1^\prime(\phi_k)}^{r-2}F_1^{\prime\prime}(\phi_k)\dx \\
        &\qquad + (r-1) \intG \Gradg\psi\cdot\Gradg\psi_k\abs{G_1^\prime(\psi_k)}^{r-2}G_1^{\prime\prime}(\psi_k)\dG \\
        &\qquad + \intO F_1^\prime(\phi)F_1^\prime(\phi_k)\abs{F_1^\prime(\phi_k)}^{r-2}\dx + \intG G_1^\prime(\psi)G_1^\prime(\psi_k)\abs{G_1^\prime(\psi_k)}^{r-2}\dG \\
        &\quad = \intO \mu F_1^\prime(\phi_k)\abs{F_1^\prime(\phi_k)}^{r-2}\dx + \intG \theta G_1^\prime(\psi_k)\abs{G_1^\prime(\psi_k)}^{r-2}\dG \\
        &\qquad - \intO F_2^\prime(\phi)F_1^\prime(\phi_k)\abs{F_1^\prime(\phi_k)}^{r-2}\dx - \intG G_2^\prime(\psi)G_1^\prime(\psi_k)\abs{G_1^\prime(\psi_k)}^{r-2}\dG \\
        &\qquad - \intG \deln\phi\big(\alpha G_1^\prime(\psi_k)\abs{G_1^\prime(\psi_k)}^{r-2} - F_1^\prime(\phi_k)\abs{F_1^\prime(\phi_k)}^{r-2}\big)\dG.
    \end{split}
\end{align}
This equation differs from \eqref{testeq} only in the last term on the right-hand side, which now needs to be handled in a different way. We first notice that due to the regularity $\phi\in L^2(0,T;H^2(\Omega))$, the trace embedding $H^1(\Om)\emb L^4(\Ga)$ implies $\deln\phi\in L^2(0,T;L^4(\Ga))$. Then, in view of the trace relation $\phi_k = \alpha\psi_k$ a.e. on $\Sigma$ as well as the domination property \eqref{domination}, we find that
\begin{align*}
    \begin{split}
        &- \intG \deln\phi\big(\alpha G_1^\prime(\psi_k)\abs{G_1^\prime(\psi_k)}^{r-2} - F_1^\prime(\phi_k)\abs{F_1^\prime(\psi_k)}^{r-2}\big)\dG \\
        &\quad =  - \intG \deln\phi\big(\alpha G_1^\prime(\psi_k)\abs{G_1^\prime(\psi_k)}^{r-2} - F_1^\prime(\alpha\psi_k)\abs{F_1^\prime(\alpha\psi_k)}^{r-2}\big)\dG \\
        &\quad \leq \intG \abs{\deln\phi}\Big(\big(1 + 2^{r-2}\kappa_1^{r-1}\big)\abs{G_1^\prime(\psi_k)}^{r-1} + 2^{r-2}\kappa_2^{r-1}\Big)\dG.
    \end{split}
\end{align*}
Consequently,
\begin{align*}
        &\intO \abs{F_1^\prime(\phi_k)}^r\dx + \intG \abs{G_1^\prime(\psi_k)}^r \dG \\
        &\quad\leq \Big(\norm{\mu}_{L^r(\Om)} + d_F\norm{\phi}_{L^r(\Om)}\Big)\norm{F_1^\prime(\phi_k)}_{L^r(\Om)}^{r-1} \\
        &\qquad + \Big(\norm{\theta}_{L^r(\Ga)} + d_G\norm{\psi}_{L^r(\Ga)} + \big(1 + 2^{r-2}\kappa_1^{r-1}\big)\norm{\deln\phi}_{L^r(\Ga)}\Big)\norm{G_1^\prime(\psi_k)}_{L^r(\Ga)}^{r-1} \\
        &\qquad + 2^{r-2}\kappa_2^{r-1}\abs{\Ga}^{1-1/r}\norm{\deln\phi}_{L^r(\Ga)} \\
        &\quad \leq \frac12\norm{F_1^\prime(\phi_k)}_{L^r(\Om)}^r + \frac12\norm{G_1^\prime(\psi_k)}_{L^r(\Ga)}^r \\
        &\qquad + \frac1r\left(\frac{2(r-1)}{r}\right)^{r-1}\Big(\norm{\mu}_{L^r(\Om)} + d_F\norm{\phi}_{L^r(\Om)}\Big)^r \\
        &\qquad + \frac1r\left(\frac{2(r-1)}{r}\right)^{r-1}\Big(\norm{\theta}_{L^r(\Ga)} + d_G\norm{\psi}_{L^r(\Ga)} + \big(1 + 2^{r-2}\kappa_1^{r-1}\big)\norm{\deln\phi}_{L^r(\Ga)}\Big)^r \\
        &\qquad + 2^{r-2}\kappa_2^{r-1}\abs{\Ga}^{1-1/r}\norm{\deln\phi}_{L^r(\Ga)}.
\end{align*}
Thus, absorbing the respective terms and taking the $r$-th root, we deduce that
\begin{align}\label{Est:FG:L^r:K=0}
    &\norm{F_1^\prime(\phi_k)}_{L^r(\Om)} + \norm{G_1^\prime(\psi_k)}_{L^r(\Ga)}\nonumber  \\
    &\quad\leq 2\cdot 2^{1/r}\Bigg[\frac{1}{r^{1/r}}\left(\frac{2(r-1)}{r}\right)^{(r-1)/r}\Big(\norm{\mu}_{L^r(\Om)} + d_F\norm{\phi}_{L^r(\Om)}\Big) \nonumber\\
    &\qquad + \frac{1}{r^{1/r}}\left(\frac{2(r-1)}{r}\right)^{(r-1)/r}\Big(\norm{\theta}_{L^r(\Ga)} + d_G\norm{\psi}_{L^r(\Ga)} + \big(1 + 2^{r-2}\kappa_1^{r-1}\big)\norm{\deln\phi}_{L^r(\Ga)}\Big)  \nonumber \\
    &\qquad + 2^{(r-2)/r}\kappa_2^{(r-1)/r}\abs{\Ga}^{1/r - 1/r^2}\norm{\deln\phi}_{L^r(\Ga)}^{1/r}\Bigg] \\
    &\quad\leq C\Big(1 + \norm{\mu}_{L^r(\Om)} + \norm{\theta}_{L^r(\Om)} + d_F\norm{\phi}_{L^r(\Om)} + d_G\norm{\psi}_{L^r(\Ga)} + \norm{\deln\phi}_{L^r(\Ga)}\Big) \nonumber.
\end{align}
Exploiting $\phi,\mu\in L^2(0,T;H^1(\Om))$, $\psi,\theta\in L^2(0,T;H^1(\Ga))$ and $\deln\phi\in L^2(0,T;L^4(\Ga))$, we conclude that \eqref{F':LinfLr} and \eqref{G':LinfLr} hold for all $r,s\in[2,4]$. Then, using regularity theory for bulk-surface elliptic system (see Proposition~\ref{Prop:Appendix}), we find that $(\phi,\psi)\in L^2(0,T;\mathcal{W}^{2,4})$. Consequently, by the trace theorem, we even have $\deln\phi\in L^2(0,T;W^{3/4,4}(\Ga))\emb L^2(0,T;L^\infty(\Ga))$. This allows us to take $r\in[2,6]$ in our previous calculations, which verifies \eqref{F':LinfLr} and \eqref{G':LinfLr} in all cases $r,s\in[2,6]$. The cases $r\in[1,2)$ follow easily from Hölder's inequality. To obtain \eqref{G':LinfLr} also for $s>6$ if $d=3$, we proceed similarly as in the derivation of inequality \eqref{est:Gk:r:pre:1}. Instead of \eqref{est:Gk:r:pre:1}, we now obtain
\begin{align}\label{est:Gk:r:pre:K=0}
    \norm{G_1^\prime(\psi_k)}_{L^s(\Ga)}^s \leq C \big(\norm{\theta}_{L^s(\Ga)} + d_G\norm{\psi}_{L^s(\Ga)} + \norm{\deln\phi}_{L^s(\Ga)}\big)\norm{G_1^\prime(\psi_k)}_{L^s(\Ga)}^{s-1}
\end{align}
a.e. on $[0,T]$. Due to the regularities $\psi,\theta\in L^2(0,T;H^1(\Ga))$, the Sobolev embedding $H^1(\Ga)\emb L^s(\Ga)$ for any $s\in[1,\infty)$, and the aforementioned regularity of $\deln\phi$, it easily follows that \eqref{G':LinfLr} holds true for $s>6$. Therefore, the proof is complete.
\end{proof}

\medskip

As a consequence of Proposition~\ref{Prop:FG:Linf}, we obtain higher regularity results for the phase-fields by means of elliptic regularity theory.

\begin{corollary}\label{Cor:pp:W2r}
    Suppose that the assumptions of Proposition~\ref{Prop:FG:Linf} hold. Then
    \begin{align}
        \phi&\in L^2(0,T;W^{2,r}(\Om)), \label{phi:W2r}\\
        \psi&\in L^2(0,T;W^{2,s}(\Ga)), \label{psi:W2r}
    \end{align}
    where in \eqref{phi:W2r} we have $r\in[1,6]$ if $d = 3$, and $r\in[1,\infty)$ if $d = 2$, and in \eqref{psi:W2r} we have $s\in[1,\infty)$.
\end{corollary}

\begin{proof}
    For $r,s\in[2,6]$ if $d = 3$ or $r,s\in[2,\infty)$ if $d = 2$, the desired regularities follow immediately from $\scp{\mu}{\theta}\in L^2(0,T;\mathcal{H}^1)$, the regularities established in Proposition~\ref{Prop:FG:Linf}, and the $L^p$ regularity theory for bulk-surface elliptic systems established in Proposition~\ref{Prop:Appendix}. 
    The cases $r,s\in[1,2)$ can be handled by Hölder's inequality. Based on these regularities, 
    the trace theorem (see, e.g., \cite[Chapter~2, Theorem~2.24]{Brezzi1987}) further implies $\deln\phi\in L^2(0,T;W^{5/6,6}(\Ga))$. Invoking the Sobolev embedding $W^{5/6,6}(\Ga)\emb L^\infty(\Ga)$, we apply regularity theory for the Laplace--Beltrami equation (see, e.g., \cite[Lemma B.1]{Vitillaro2017}) to obtain \eqref{psi:W2r} for any $s\in[1,\infty)$. 
\end{proof}    

\medskip

\subsection{Existence of strong solutions}

In view of the regularity properties established in Proposition~\ref{Prop:FG:Linf} and Corollary~\ref{Cor:pp:W2r}, we now proceed with the proof of Theorem~\ref{thm:highreg}.

\begin{proof}[Proof of Theorem~\ref{thm:highreg}]
\textbf{Step 1.} We start by showing $\scp{\phi}{\psi}\in W^{1,\infty}(0,T;(\mathcal{H}^1_{L,\beta})^\prime)\cap H^1(0,T;\mathcal{H}^1)$ and $\scp{\Grad\mu}{\Gradg\theta}\in L^\infty(0,T;\mathcal{L}^2)$. 

For any function $f:[0,T+1]\rightarrow X$, where $X$ is a Banach space, $t\in[0,T+1]$ and $h\in(0,1)$, we denote by
\begin{align*}
    \del_t^h f(t) \coloneqq \frac{f(t+h) - f(t)}{h}
\end{align*}
the forward difference quotient of $f$ at time $t$. 

As the mobilities are assumed to be constant, we simply set $m_\Om\equiv 1$ and $m_\Ga\equiv 1$ without loss of generality.
We now extend our velocity fields onto the time interval $[0,T+1]$ such that we have 
\begin{align*}
    \boldsymbol{v} &\in H^1(0,T+1;\mathbf{L}^{6/5}(\Om))\cap L^2(0,T+1;\mathbf{L}_\Div^3(\Om)),
    \\
    \boldsymbol{w} &\in H^1(0,T+1;\mathbf{L}^{1+\omega}(\Ga))\cap L^2(0,T+1;\mathbf{L}_\Div^2(\Ga)).
\end{align*}
Note that the final result is independent of the extension as it does not depend on the values of $\boldsymbol{v}$ and $\boldsymbol{w}$ on $(T,T+1]$.

We now also extend the solution $(\phi,\psi,\mu,\theta)$ onto the interval $[0,T+1]$. This is possible since the final time $T>0$ in Theorem~\ref{THEOREM:EOWS} is arbitrary, and due to the uniqueness established in Theorem~\ref{THEOREM:UNIQUE:SING}, the solutions corresponding to $T$ and $T+1$ coincide on the interval $[0,T]$.
 
Owing to \eqref{WF:PP:SING}, for any $h\in(0,1)$, the weak solution satisfies
\begin{align}\label{wf:delth}
    \begin{split}
        \bigang{\bigscp{\delt\delt^h\phi}{\delt\delt^h\psi}}{\bigscp{\zeta}{\zeta_\Ga}}_{\mathcal{H}^1_{L,\beta}} &= \bigscp{\delt^h\phi\,\boldsymbol{v}}{\Grad\zeta}_{L^2(\Om)} + \bigscp{\delt^h\boldsymbol{v}\,\phi(\cdot+h)}{\Grad\zeta}_{L^2(\Om)} \\
        &\quad + \bigscp{\delt^h\psi\,\boldsymbol{w}}{\Gradg\zeta_\Ga}_{L^2(\Ga)} + \bigscp{\delt^h\boldsymbol{w}\,\psi(\cdot+h)}{\Gradg\zeta_\Ga}_{L^2(\Ga)} \\
        &\quad - \bigscp{\scp{\delt^h\mu}{\delt^h\theta}}{\scp{\zeta}{\zeta_\Ga}}_{L,\beta}
    \end{split}
\end{align}
almost everywhere on $[0,T]$ for all $\scp{\zeta}{\zeta_\Ga}\in\mathcal{H}^1_{L,\beta}$, where
\begin{subequations}\label{STRG:DELTH}
    \begin{align}
        &\delt^h\mu = -\Lap\delt^h\phi + \delt^h F^\prime(\phi) \qquad &&\text{a.e.~in~} Q, \label{STRG:DELTH:MU}\\
        &\delt^h\theta = - \Lapg\delt^h\psi + \delt^h G^\prime(\psi) + \alpha\deln\delt^h\phi \qquad &&\text{a.e.~on~} \Sigma, \label{STRG:DELTH:TH}\\
        & \begin{cases} 
            K\deln\delt^h\phi = \alpha\delt^h\psi - \delt^h\phi &\text{if} \ K\in [0,\infty), \\
            \deln\delt^h\phi = 0 &\text{if} \ K = \infty
        \end{cases} &&  \text{a.e.~on } \Sigma.
    \end{align}
\end{subequations}
By definition of the operator $\mathcal{S}_{L,\beta}$ (see~\ref{PRELIM:bulk-surface-elliptic}), we have
\begin{align*}
    \bigscp{\mathcal{S}_{L,\beta}(\delt^h\phi,\delt^h\psi)}{\scp{\delt^h\mu}{\delt^h\theta}}_{L,\beta} = - \bigscp{\scp{\delt^h\phi}{\delt^h\psi}}{\scp{\delt^h\mu}{\delt^h\theta}}_{\mathcal{L}^2}.
\end{align*}
Recalling that 
\begin{align*}
    \mean{\mathcal{S}^\Om_{L,\beta}(\delt^h\phi,\delt^h\psi)}{\mathcal{S}^\Ga_{L,\beta}(\delt^h\phi,\delt^h\psi)} = 0,
\end{align*}
we test \eqref{wf:delth} with $\scp{\zeta}{\zeta_\Ga} \coloneqq \mathcal{S}_{L,\beta}(\delt^h\phi,\delt^h\psi)$ and find
\begin{align}\label{est:ddt1}
    \begin{split}
        &\frac12\ddt \norm{\scp{\delt^h\phi}{\delt^h\psi}}_{L,\beta,\ast}^2 + \bigscp{\scp{\delt^h\mu}{\delt^h\theta}}{\scp{\delt^h\phi}{\delt^h\psi}}_{\mathcal{L}^2} \\
        &\quad = -\bigscp{\delt^h\phi\,\boldsymbol{v}}{\Grad\mathcal{S}^\Om_{L,\beta}(\delt^h\phi,\delt^h\psi)}_{L^2(\Om)} - \bigscp{\delt^h\boldsymbol{v}\,\phi(\cdot+h)}{\Grad\mathcal{S}^\Om_{L,\beta}(\delt^h\phi,\delt^h\psi)}_{L^2(\Om)} \\
        &\qquad - \bigscp{\delt^h\psi\,\boldsymbol{w}}{\Gradg\mathcal{S}^\Ga_{L,\beta}(\delt^h\phi,\delt^h\psi)}_{L^2(\Ga)} - \bigscp{\delt^h\boldsymbol{w}\,\psi(\cdot+h)}{\Gradg\mathcal{S}^\Ga_{L,\beta}(\delt^h\phi,\delt^h\psi)}_{L^2(\Ga)} \\
        &\quad\eqqcolon \sum_{k=1}^4 J_k.
    \end{split}
\end{align}
Next, exploiting \eqref{STRG:DELTH}, we have
\begin{align}\label{wf:delth:mt:pp}
    \begin{split}
        -\bigscp{\scp{\delt^h\mu}{\delt^h\theta}}{\scp{\delt^h\phi}{\delt^h\psi}}_{\mathcal{L}^2} &= -\norm{\scp{\delt^h\phi}{\delt^h\psi}}_{K,\alpha}^2 - \intO \delt^h[F^\prime(\phi)]\delt^h\phi\dx \\
        &\qquad - \intG\delt^h[G^\prime(\psi)]\delt^h\psi\dG
    \end{split}
\end{align}
almost everywhere on $[0,T]$. Therefore, due to the splitting $F = F_1 + F_2$ and $G = G_1 + G_2$, where $F_1^\prime$ and $G_1^\prime$ are monotonically increasing, while $F_2^\prime$ and $G_2^\prime$ are Lipschitz continuous (see \ref{S2} and \ref{S4}), and exploiting \eqref{STRG:DELTH}, we can bound the last two terms on the right-hand side of \eqref{wf:delth:mt:pp} as
\begin{align}\label{est:diffq:FG}
    - \intO \delt^h[F^\prime(\phi)]\delt^h\phi\dx - \intG\delt^h[G^\prime(\psi)]\delt^h\psi\dG \leq C\norm{\scp{\delt^h\phi}{\delt^h\psi}}_{\mathcal{L}^2}^2
\end{align}
Thus, combining \eqref{wf:delth:mt:pp} and \eqref{est:diffq:FG}, we use Ehrling's lemma to deduce that
\begin{align}\label{est:ehr}
    -\bigscp{\scp{\delt^h\mu}{\delt^h\theta}}{\scp{\delt^h\phi}{\delt^h\psi}}_{\mathcal{L}^2} \leq -\frac34 \norm{\scp{\delt^h\phi}{\delt^h\psi}}_{K,\alpha}^2 + C\norm{\scp{\delt^h\phi}{\delt^h\psi}}_{L,\beta,\ast}^2.
\end{align}
We now intend to bound the terms $J_k$, $k=1,\ldots,4$. To this end, we recall that the velocity fields $\boldsymbol{v}$ and $\boldsymbol{w}$ are divergence-free, and we use integration by parts, Hölder's inequality, Young's inequality, Sobolev embeddings, and the bulk-surface Poincar\'{e} inequality \ref{PRELIM:POINCINEQ} to obtain
\begin{align}
    J_1 + J_3 &\leq \norm{\Grad\delt^h\phi}_{\mathbf{L}^2(\Om)}\norm{\boldsymbol{v}}_{\mathbf{L}^3(\Om)}\norm{\mathcal{S}_{L,\beta}^\Om(\delt^h\phi,\delt^h\psi)}_{L^6(\Om)} \nonumber \\
    &\quad + \norm{\Gradg\delt^h\psi}_{\mathbf{L}^2(\Ga)}\norm{\boldsymbol{w}}_{\mathbf{L}^{2+\omega}(\Ga)}\norm{\mathcal{S}_{L,\beta}^\Ga(\delt^h\phi,\delt^h\psi)}_{L^{{2(2+\omega)/\omega}}(\Ga)} \nonumber \\
    &\leq \tfrac14\norm{\scp{\delt^h\phi}{\delt^h\psi}}_{K,\alpha}^2 + C\norm{\scp{\boldsymbol{v}}{\boldsymbol{w}}}_{\mathcal{Y}_\omega}^2\norm{\scp{\delt^h\phi}{\delt^h\psi}}_{L,\beta,\ast}^2, 
    \nonumber \\[1ex]
    J_2 + J_4 &\leq \norm{\Grad\phi(\cdot + h)}_{\mathbf{L}^\infty(\Om)}\norm{\delt^h\boldsymbol{v}}_{\mathbf{L}^{6/5}(\Om)}\norm{\mathcal{S}^\Om_{L,\beta}(\delt^h\phi,\delt^h\psi)}_{L^6(\Om)} \label{est:Jk} \\
    &\quad + \norm{\Gradg\psi(\cdot + h)}_{\mathbf{L}^\infty(\Ga)}\norm{\delt^h\boldsymbol{w}}_{\mathbf{L}^{1+\omega}(\Ga)}\norm{\mathcal{S}^\Ga_{L,\beta}(\delt^h\phi,\delt^h\psi)}_{L^{{(1+\omega)/\omega}}(\Ga)} \nonumber \\
    &\leq \norm{\scp{\delt^h\boldsymbol{v}}{\delt^h\boldsymbol{w}}}_{\mathcal{X}_\omega}^2 + C\norm{\scp{\phi(\cdot + h)}{\psi(\cdot + h)}}_{\mathcal{W}^{1,\infty}}^2\norm{\scp{\delt^h\phi}{\delt^h\psi}}_{L,\beta,\ast}^2, \nonumber 
\end{align}
where $\mathcal{Y}_\omega \coloneqq \mathbf{L}^3(\Om)\times \mathbf{L}^{2+\omega}(\Ga)$ and $\mathcal{X}_\omega \coloneqq \mathbf{L}^{6/5}(\Om)\times \mathbf{L}^{1+\omega}(\Ga)$.
Thus, collecting \eqref{est:ehr}-\eqref{est:Jk}, we deduce from \eqref{est:ddt1} that
\begin{align}\label{Gronwall:1}
    \begin{split}
        &\frac12\ddt \norm{\scp{\delt^h\phi}{\delt^h\psi}}_{L,\beta,\ast}^2 + \frac12\norm{\scp{\delt^h\phi}{\delt^h\psi}}_{K,\alpha}^2 \\
        &\quad\leq \norm{\scp{\delt^h\boldsymbol{v}}{\delt^h\boldsymbol{w}}}_{\mathcal{X}_\omega}^2 + C\Lambda_h\norm{\scp{\delt^h\phi}{\delt^h\psi}}_{L,\beta,\ast}^2,
    \end{split}
\end{align}
where 
\begin{align*}
    \Lambda_h\coloneqq 1 + \norm{\scp{\boldsymbol{v}}{\boldsymbol{w}}}_{\mathcal{Y}_\omega}^2 + \norm{\scp{\phi(\cdot + h)}{\psi(\cdot + h)}}_{\mathcal{W}^{1,\infty}}^2.
\end{align*}
We readily note that in view of the regularity of the phase-fields and the velocity fields we have
\begin{align}\label{reg:Lambdah}
    \sup_{h\in(0,1)}\norm{\Lambda_h}_{L^1(0,T)} < \infty.
\end{align}
Moreover, it holds that
\begin{align}
    &\bignorm{\scp{\delt^h\boldsymbol{v}(t)}{\delt^h\boldsymbol{w}(t)}}_{\mathcal{X}_\omega} \leq \frac1h\int_t^{t+h}\norm{\scp{\delt\boldsymbol{v}(s)}{\delt\boldsymbol{w}(s)}}_{\mathcal{X}_\omega}\ds, \label{est:delth:1} \\
    &\lim\limits_{h\rightarrow 0}\frac1h\int_t^{t+h}\norm{\scp{\delt\boldsymbol{v}(s)}{\delt\boldsymbol{w}(s)}}_{\mathcal{X}_\omega}\ds = \norm{\scp{\delt\boldsymbol{v}(t)}{\delt\boldsymbol{w}(t)}}_{\mathcal{X}_\omega} \label{est:delth:2}
\end{align}
for almost every $t\in[0,T]$.
Thus, in view of the regularity of $\boldsymbol{v}$ and $\boldsymbol{w}$, we readily infer from \eqref{est:delth:1} and \eqref{est:delth:2} that 
\begin{align}\label{vel:Xep}
    \sup_{h\in(0,1)}\norm{\scp{\delt^h\boldsymbol{v}}{\delt^h\boldsymbol{w}}}_{L^2(0,T;\mathcal{X}_\omega)} < +\infty.
\end{align}
Next, we intend to control the initial data. Arguing as for \eqref{est:ehr}, we see that
\begin{align}\label{initcond:ehrling}
    \begin{split}
        - \bigscp{\scp{\mu - \mu_0}{\theta - \theta_0}}{\scp{\phi - \phi_0}{\psi - \psi_0}}_{\mathcal{L}^2} 
        &\leq -\frac12 \norm{\scp{\phi - \phi_0}{\psi - \psi_0}}_{K,\alpha}^2 \\
        &\qquad + C\norm{\scp{\phi - \phi_0}{\psi - \psi_0}}_{L,\beta,\ast}^2.
    \end{split}
\end{align}
We now recall $\abs{\phi}\le 1$ a.e.~in $Q$ and $\abs{\psi}\le 1$ a.e.~on $\Sigma$.
Invoking the weak formulations \eqref{WF:PP:SING}-\eqref{WF:MT:SING} and condition \ref{cond:MT:0}, we use \eqref{initcond:ehrling} to derive the estimate
\begin{align*}
    \begin{split}
    &\ddt\frac12 \norm{\scp{\phi - \phi_0}{\psi - \psi_0}}_{L,\beta,\ast}^2 
    \\
    &\quad= - \bigscp{\scp{\mu}{\theta}}{\scp{\phi - \phi_0}{\psi - \psi_0}}_{\mathcal{L}^2} \\
    &\qquad - \bigscp{\phi\,\boldsymbol{v}}{\Grad\mathcal{S}^\Om_{L,\beta}(\phi - \phi_0, \psi - \psi_0)}_{L^2(\Omega)} 
    - \bigscp{\psi\,\boldsymbol{w}}{\Gradg\mathcal{S}^\Ga_{L,\beta}(\phi - \phi_0, \psi - \psi_0)}_{L^2(\Gamma)} 
    \end{split}
    \\
    \begin{split}
    &\quad= - \bigscp{\scp{\mu - \mu_0}{\theta - \theta_0}}{\scp{\phi - \phi_0}{\psi - \psi_0}}_{\mathcal{L}^2} 
    + \bigscp{\scp{\mu_0}{\theta_0}}{\scp{\phi - \phi_0}{\psi - \psi_0}}_{\mathcal{L}^2} \\
    &\qquad - \bigscp{\phi\,\boldsymbol{v}}{\Grad\mathcal{S}^\Om_{L,\beta}(\phi - \phi_0, \psi - \psi_0)}_{L^2(\Omega)} 
    - \bigscp{\psi\,\boldsymbol{w}}{\Gradg\mathcal{S}^\Ga_{L,\beta}(\phi - \phi_0, \psi - \psi_0)}_{L^2(\Gamma)} 
    \end{split}
    \\
    \begin{split}
    &\quad\leq -\frac12 \norm{\scp{\phi - \phi_0}{\psi - \psi_0}}_{K,\alpha}^2 +  C\norm{\scp{\phi - \phi_0}{\psi - \psi_0}}_{L,\beta,\ast}^2 \\
    &\qquad + \norm{\scp{\mu_0}{\theta_0}}_{L,\beta}\norm{\scp{\phi - \phi_0}{\psi - \psi_0}}_{L,\beta,\ast} \\
    &\qquad + \norm{\scp{\boldsymbol{v}}{\boldsymbol{w}}}_{\mathcal{L}^2}\norm{\scp{\phi - \phi_0}{\psi - \psi_0}}_{L,\beta,\ast},
    \end{split}
\end{align*}
which gives the differential inequality
\begin{align*}
    \begin{split}
         &\ddt\frac12 \norm{\scp{\phi - \phi_0}{\psi - \psi_0}}_{L,\beta,\ast}^2 \\
         &\qquad \leq C\Big( 1 + \norm{\scp{\mu_0}{\theta_0}}_{L,\beta} + \norm{\scp{\boldsymbol{v}}{\boldsymbol{w}}}_{\mathcal{L}^2}\Big)\norm{\scp{\phi - \phi_0}{\psi - \psi_0}}_{L,\beta,\ast}.
    \end{split}
\end{align*}
Now, a quadratic variant of Gronwall's lemma (see, e.g., \cite[Lemma A.5]{Brezis}) yields
\begin{align*}
    \norm{\scp{\phi(t) - \phi_0}{\psi(t) - \psi_0}}_{L,\beta,\ast} \leq C \big( 1 + \norm{\scp{\mu_0}{\theta_0}}_{L,\beta}\big) t 
    + C\int_0^t \norm{\scp{\boldsymbol{v}(s)}{\boldsymbol{w}(s)}}_{\mathcal{L}^2}\ds
\end{align*}
for almost all $t\in[0,T]$, which in turn entails
\begin{align}\label{Gronwall:ic}
    \begin{split}
        \norm{\scp{\delt^h\phi(0)}{\delt^h\psi(0)}}_{L,\beta,\ast} &\leq C \big( 1 + \norm{\scp{\mu_0}{\theta_0}}_{L,\beta}\big)  
        + \frac{C}{h}\int_0^h \norm{\scp{\boldsymbol{v}(s)}{\boldsymbol{w}(s)}}_{\mathcal{L}^2}\ds \\
        &\leq C\big( 1 + \norm{\scp{\mu_0}{\theta_0}}_{L,\beta} + \norm{\scp{\boldsymbol{v}}{\boldsymbol{w}}}_{L^\infty(0,T;\mathcal{L}^2)}\big)
    \end{split}
\end{align}
for all $h\in(0,1)$. Thus, applying Gronwall's lemma to \eqref{Gronwall:1}, and invoking \eqref{reg:Lambdah}, \eqref{vel:Xep} as well as \eqref{Gronwall:ic}, we deduce that
\begin{align}\label{Gronwall:2}
    \begin{split}
        \norm{\scp{\delt^h\phi(t)}{\delt^h\psi(t)}}_{L,\beta,\ast}^2 &\leq \norm{\scp{\delt^h\phi(0)}{\delt^h\psi(0)}}_{L,\beta,\ast}^2\exp\Big(C\int_0^t \Lambda_h\dtau\Big) \\
        &\qquad + C\int_0^t \norm{\scp{\delt^h\boldsymbol{v}}{\delt^h\boldsymbol{w}}}_{\mathcal{X}_\omega}^2\exp\Big(C\int_s^t \Lambda_h\dtau\Big)\ds \\
        &\leq C
    \end{split}
\end{align}
for almost all $t\in[0,T]$.
Therefore, owing to Ehrling's lemma, we conclude from \eqref{Gronwall:1} and \eqref{Gronwall:2} that
\begin{align}\label{est:delth}
    \norm{\scp{\delt^h\phi}{\delt^h\psi}}_{L^\infty(0,T;(\mathcal{H}^1_{L,\beta})^\prime)} + \norm{\scp{\delt^h\phi}{\delt^h\psi}}_{L^2(0,T;\mathcal{H}^1)} \leq C.
\end{align}
Hence, the time derivative $\scp{\delt\phi}{\delt\psi}$ exists in the weak sense and satisfies
\begin{align*}
    \scp{\delt\phi}{\delt\psi} \in L^\infty(0,T;(\mathcal{H}^1_{L,\beta})^\prime)
    \cap
    L^2(0,T;\mathcal{H}^1).
\end{align*}
As a consequence, we thus have
\begin{align}\label{hreg:pp}
    \scp{\phi}{\psi}\in W^{1,\infty}\big(0,T;(\mathcal{H}^1_{L,\beta})^\prime\big)\cap H^1(0,T;\mathcal{H}^1).
\end{align}
If $L\in[0,\infty)$, testing \eqref{WF:PP:SING} with $\scp{\mu - \beta\mean{\mu}{\theta}}{\theta - \mean{\mu}{\theta}} \in \mathcal{H}^1_{L,\beta}$, and employing the bulk-surface Poincar\'e inequality (see~\ref{PRELIM:POINCINEQ}) together with Young's inequality, we find that
\begin{align}
    \label{EST:MUTHLB}
    \norm{\scp{\mu}{\theta}}_{L,\beta}^2 \leq C\norm{\scp{\delt\phi}{\delt\psi}}_{(\mathcal{H}^1_{L,\beta})^\prime}^2 + C\norm{\scp{\boldsymbol{v}}{\boldsymbol{w}}}_{\mathcal{L}^2}^2.
\end{align}
Here, we used once more that $\abs{\phi}\le 1$ a.e.~in $Q$ and $\abs{\psi}\le 1$ a.e.~on $\Sigma$. In the case $L = \infty$, the estimate \eqref{EST:MUTHLB} can be derived similarly by choosing $\scp{\mu - \meano{\mu}}{\theta - \meang{\theta}}$ as the test function.

In view of the regularity \eqref{hreg:pp} established above, as well as the regularity of the velocity fields $\boldsymbol{v}$ and $\boldsymbol{w}$, we conclude
\begin{align}\label{REG:MT:LB}
    t\mapsto \norm{\scp{\mu(t)}{\theta(t)}}_{L,\beta}\in L^\infty(0,T),
\end{align}
and, in particular,
\begin{align}
    \label{REG:GMT:1}
    \scp{\Grad\mu}{\Gradg\theta}\in L^\infty(0,T;\mathcal{L}^2).
\end{align}

\pagebreak[1]

\textbf{Step 2:} By means of elliptic regularity theory we now show $\scp{\mu}{\theta}\in L^2(0,T;\mathcal{H}^2)$. 

In view of the regularities established in Step 1, and using that $\boldsymbol{v}$ and $\boldsymbol{w}$ are divergence-free in $\Om$ and on $\Ga$, respectively, we find that
\begin{align}\label{reg:mt:2}
    \begin{split}
        &\intO\Grad\mu(t)\cdot\Grad\zeta\dx + \intG\Gradg\theta(t)\cdot\Gradg\zeta_\Ga\dG 
        \\
        &\qquad + \sigma(L)\intG(\beta\theta(t) - \mu(t))(\beta\zeta_\Ga - \zeta)\dG 
        \\
        &\quad = -\intO\delt\phi(t)\zeta\dx - \intG\delt\psi(t)\zeta_\Ga\dG 
        \\
        &\qquad -\intO\Grad\phi(t)\cdot\boldsymbol{v}(t)\zeta\dx - \intG\Gradg\psi(t)\cdot\boldsymbol{w}(t)\zeta_\Ga\dG
    \end{split}
\end{align}
for almost all $t\in[0,T]$ and all $\scp{\zeta}{\zeta_\Ga}\in\mathcal{H}^1_{L,\beta}$. 
This means that the pair $\scp{\mu(t)}{\theta(t)}$ is a weak solution of the bulk-surface elliptic problem 
\begin{subequations}
    \begin{alignat*}{3}
        &-\Lap\mu(t) = -\delt\phi(t) - \Grad\phi(t)\cdot\boldsymbol{v}(t) 
        &&\quad\text{in } \Om, \\
        &-\Lapg\theta(t) + \beta\deln\mu(t) = -\delt\psi(t) - \Gradg\psi(t)\cdot\boldsymbol{w}(t) &&\quad\text{on } \Ga, \\
        &\begin{cases} 
        L \deln\mu(t) = \beta\theta(t) - \mu(t) &\text{if} \  L\in[0,\infty), \\
        \deln\mu(t) = 0 &\text{if} \ L=\infty
        \end{cases} &&\quad\text{on } \Ga
    \end{alignat*}
\end{subequations}
for almost all $t\in[0,T]$. Applying regularity theory for elliptic problems with bulk-surface coupling (see Proposition~\ref{Prop:Appendix} with $p=2$), we find that
\begin{align*}
    \norm{\scp{\mu(t)}{\theta(t)}}_{\mathcal{H}^2} &\leq C\norm{\scp{-\delt\phi(t) - \Grad\phi(t)\cdot\boldsymbol{v}(t)}{-\delt\psi(t) - \Gradg\psi(t)\cdot\boldsymbol{w}(t)}}_{\mathcal{L}^2} \\
    &\leq C\norm{\scp{\delt\phi(t)}{\delt\psi(t)}}_{\mathcal{L}^2} + C\norm{\scp{\Grad\phi(t)}{\Gradg\psi(t)}}_{\mathcal{L}^\infty}\norm{\scp{\boldsymbol{v}(t)}{\boldsymbol{w}(t)}}_{\mathcal{L}^2} \\
    &\leq C\norm{\scp{\delt\phi(t)}{\delt\psi(t)}}_{\mathcal{L}^2} + C\norm{\scp{\phi(t)}{\psi(t)}}_{\mathcal{W}^{1,\infty}}\norm{\scp{\boldsymbol{v}(t)}{\boldsymbol{w}(t)}}_{\mathcal{L}^2}
\end{align*}
for almost all $t\in[0,T]$. Squaring and integrating this inequality in time from $0$ to $T$ yields $\scp{\mu}{\theta}\in L^2(0,T;\mathcal{H}^2)$.

\textbf{Step 3:} We next show $\scp{\phi}{\psi}\in L^\infty(0,T;\mathcal{W}^{2,6})\cap \big( C(\overline{Q})\times C(\overline{\Sigma})\big)$, $\scp{\mu}{\theta}\in L^\infty(0,T;\mathcal{H}^1)$ and $(F^\prime(\phi), G^\prime(\psi))\in L^\infty(0,T;\mathcal{L}^6)$.

To prove these regularities, we restrict ourselves to the case $L\in[0,\infty)$. If $L = \infty$, we can argue analogously (with obvious modifications).

First, note that, in view of Theorem~\ref{THEOREM:EOWS}, we have
\begin{subequations}\label{STRG:MT}
    \begin{align}
         \label{STRG:MU}
        &\mu = -\Lap\phi + F'(\phi) &&\text{a.e.~in } Q, \\
         \label{STRG:THETA}
        &\theta = -\Lapg\psi + G'(\psi) + \alpha\deln\phi &&\text{a.e.~on } \Sigma, \\
         \label{STRG:PHIPSI}
        & \begin{cases} 
            K\deln\phi = \alpha\psi - \phi &\text{if} \ K\in [0,\infty), \\
            \deln\phi = 0 &\text{if} \ K = \infty
        \end{cases} &&  \text{a.e.~on } \Sigma.
    \end{align}
\end{subequations}
Due to regularity theory for bulk-surface elliptic systems (see Proposition~\ref{Prop:Appendix} with $p=6$), we further have the estimate
\begin{align}\label{PP:H^2:EST}
    \norm{\scp{\phi(t)}{\psi(t)}}_{\mathcal{W}^{2,6}} \leq C\big( \norm{\scp{\mu(t)}{\theta(t)}}_{\mathcal{L}^6} + \norm{\scp{F^\prime(\phi(t))}{G^\prime(\psi(t)}}_{\mathcal{L}^6}\big)
\end{align}
for almost all $t\in[0,T]$. Our goal is to estimate the right-hand side of \eqref{PP:H^2:EST}. To do so, we first notice that by the Miranville--Zelik inequality (see \cite[Appendix A.1]{Miranville2004} or \cite[p.908]{Gilardi2009}), there exist positive constants $c_1,c_2$ and a non-negative constant $c_3$ such that
\begin{align}
    \begin{split}\label{EST:MZ:2}
        &c_1\norm{F_1^\prime(\phi)}_{L^1(\Om)} + c_2\norm{G_1^\prime(\psi)}_{L^1(\Ga)} - c_3 \\
        &\quad\leq \intO F_1^\prime(\phi)(\phi - \beta\mean{\phi}{\psi})\dx + \intG G_1^\prime(\psi)(\psi - \mean{\phi}{\psi})\dG.
    \end{split}
\end{align}
Next, we multiply \eqref{STRG:MU} and \eqref{STRG:THETA} by $\big(\beta^2\abs{\Om} + \abs{\Ga}\big)^{-1}\beta$ and $\big(\beta^2\abs{\Om} + \abs{\Ga}\big)^{-1}$, respectively, and employ integration by parts. This yields
\begin{align}\label{EST:MEANMT}
    \abs{\mean{\mu}{\theta}} \leq C\big( 1 + \norm{F_1^\prime(\phi)}_{L^1(\Om)} + \norm{G_1^\prime(\psi)}_{L^1(\Ga)}\big).
\end{align}
Furthermore, multiplying \eqref{STRG:MU} and \eqref{STRG:THETA} by $\phi - \beta\mean{\phi}{\psi}$ and $\psi - \mean{\phi}{\psi}$, respectively, using integration by parts as well as the bulk-surface Poincar\'{e} inequality, we infer
\begin{align}\label{EST:FG:MEAN}
    \begin{split}
        &\intO F_1^\prime(\phi)(\phi - \beta\mean{\phi}{\psi})\dx + \intG G_1^\prime(\psi)(\psi - \mean{\phi}{\psi})\dG \\
        &\quad= - \intO F_2^\prime(\phi)(\phi - \beta\mean{\phi}{\psi})\dx - \intG G_2^\prime(\psi)(\psi - \mean{\phi}{\psi}) \dG \\
        &\qquad + \intO (\mu - \beta\mean{\mu}{\theta})(\phi - \beta\mean{\phi}{\psi})\dx 
        - \intO \abs{\Grad\phi}^2 \dx
        \\
        &\qquad+ \intG (\theta - \mean{\mu}{\theta})(\psi - \mean{\phi}{\psi})\dG 
        - \intG \abs{\Gradg\psi}^2 \dG
        \\
        &\qquad - \intG \deln\phi 
        \big(\alpha\psi - \phi - (\alpha-\beta)\mean{\phi}{\psi} \big)
        \dG
        \\[1ex]
        &\quad\leq C\big( 1 + \norm{(\mu,\theta)}_{L,\beta} + \gamma(K)\norm{\deln\phi}_{L^2(\Ga)}\big),
    \end{split}
\end{align}
where $\gamma(K) = \mathbf{1}_{\{0\}}(K)$ denotes again the characteristic function of the set $\{0\}$.
This yields
\begin{align}\label{Est:FG:L^1:K}
    \norm{F_1^\prime(\phi)}_{L^1(\Om)} + \norm{G_1^\prime(\psi)}_{L^1(\Ga)} \leq C\big(1 + \norm{(\mu,\theta)}_{L,\beta} + \gamma(K)\norm{\deln\phi}_{L^2(\Ga)}\big)
\end{align}
a.e. on $[0,T]$. Hence, in view of \eqref{EST:MEANMT}, we obtain
\begin{align}\label{Est:Mean:mt:K}
    \abs{\mean{\mu}{\theta}} \leq C\big(1 + \norm{(\mu,\theta)}_{L,\beta} + \gamma(K)\norm{\deln\phi}_{L^2(\Ga)}\big).
\end{align} 
Then, returning to the proof of Proposition~\ref{Prop:FG:Linf}, in particular \eqref{Est:FG:L^r:K>0} in the case $K\in(0,\infty]$ and \eqref{Est:FG:L^r:K=0} in the case $K = 0$, we find that
\begin{align}\label{Est:FG:L^r:K}
    \begin{split}
        \norm{(F_1^\prime(\phi),G_1^\prime(\psi))}_{\mathcal{L}^r} &\leq C\big(1 + \norm{(\mu,\theta)}_{\mathcal{L}^r} + \gamma(K)\norm{\deln\phi}_{L^r(\Ga)}\big) \\
        &\leq C\big( 1 + \norm{(\mu,\theta)}_{L,\beta} + \gamma(K)\norm{\deln\phi}_{L^r(\Ga)}\big)
    \end{split}
\end{align}
for any $r\in[2,6]$. Here, we made again use of the bulk-surface Poincar\'{e} inequality. 

If $K\in(0,\infty]$, we readily deduce that $(F_1^\prime(\phi), G_1^\prime(\psi))\in L^\infty(0,T;\mathcal{L}^6)$ as well as $\abs{\mean{\mu}{\theta}}\in L^\infty(0,T)$. Thus, by means of the bulk-surface Poincar\'{e} inequality, we get
\begin{align}
    \norm{(\mu,\theta)}_{\mathcal{L}^2} \leq C\big(1 + \norm{(\mu,\theta)}_{L,\beta}\big).
\end{align}
In combination with \eqref{REG:MT:LB} and \eqref{REG:GMT:1}, this proves $(\mu,\theta)\in L^\infty(0,T;\mathcal{H}^1)$. Then, we readily deduce from \eqref{PP:H^2:EST} that
\begin{align*}
    (\phi,\psi)\in L^\infty(0,T;\mathcal{W}^{2,6}).
\end{align*}

If $K = 0$, we additionally need to take care of the normal derivative $\deln\phi$. To this end, choosing $r = 2$ in \eqref{Est:FG:L^r:K} and exploiting the estimate \eqref{Est:Mean:mt:K}, elliptic regularity theory yields that
\begin{align}
    \begin{split}
        \norm{(\phi,\psi)}_{\mathcal{H}^2} &\leq C\big(\norm{(\mu,\theta)}_{\mathcal{L}^2} + \norm{(F^\prime(\phi),G^\prime(\psi))}_{\mathcal{L}^2}\big) \\
        &\leq C\big(1 + \norm{(\mu,\theta)}_{L,\beta} + \norm{\deln\phi}_{L^2(\Ga)}\big).
    \end{split}
\end{align}
Then, employing again the trace theorem as in Step 6 of the proof of Theorem~\ref{THEOREM:EOWS} (cf.~\eqref{est:trace}) and using Ehrling's lemma along with an absorption argument, we obtain
\begin{align}
    \norm{(\phi,\psi)}_{\mathcal{H}^2} \leq C\big( 1 + \norm{(\mu,\theta)}_{L,\beta}\big).
\end{align}
In view of \eqref{REG:MT:LB}, this yields $(\phi,\psi)\in L^\infty(0,T;\mathcal{H}^2)$. In particular, we thus have $\deln\phi\in L^\infty(0,T;L^2(\Ga))$. 
Due to \eqref{Est:FG:L^1:K} and \eqref{Est:Mean:mt:K}, we infer $(F_1^\prime(\phi),G_1^\prime(\psi))\in L^\infty(0,T;\mathcal{L}^2)$ as well as $\abs{\mean{\mu}{\theta}}\in L^\infty(0,T)$. 
In combination with \eqref{REG:MT:LB} and \eqref{REG:GMT:1}, this proves that $(\mu,\theta)\in L^\infty(0,T;\mathcal{H}^1)$. 
Next, by the trace embedding $H^1(\Om)\emb L^4(\Ga)$, we deduce that $\deln\phi\in L^\infty(0,T;L^4(\Ga))$. Thus, choosing $r = 4$ in \eqref{Est:FG:L^r:K}, we find that $(F_1^\prime(\phi), G_1^\prime(\psi))\in L^\infty(0,T;\mathcal{L}^4)$, which in turn provides $(\phi,\psi)\in L^\infty(0,T;\mathcal{W}^{2,4})$. Finally, using the continuous embeddings $W^{1,4}(\Om)\emb W^{3/4,4}(\Ga)\emb L^\infty(\Ga)$, we have $\deln\phi\in L^\infty(0,T;L^\infty(\Ga))$. Therefore, choosing $r=6$ in \eqref{Est:FG:L^r:K}, we deduce $(F_1^\prime(\phi), G_1^\prime(\psi))\in L^\infty(0,T;\mathcal{L}^6)$, from which we finally conclude $(\phi,\psi)\in L^\infty(0,T;\mathcal{W}^{2,6})$.

Lastly, as we know from Step~1 that $\phi\in H^1(0,T;H^1(\Om))$, we use the compact embedding $H^2(\Om)\emb H^{7/4}(\Om)$, the continuous embeddings $H^{7/4}(\Om)\emb H^1(\Om)$ and $H^{7/4}(\Om)\emb C(\overline{\Om})$, and the Aubin--Lions--Simon lemma to deduce that
\begin{align*}
    \phi\in C([0,T];H^{7/4}(\Om)) \emb C([0,T];C(\overline{\Om})) \cong C(\overline{Q}).
\end{align*}
Analogously, we infer $\psi\in C(\overline{\Sigma})$. 

This means that all claims are established, apart from the regularities $F^\prime(\phi)\in L^2(0,T;L^\infty(\Om))$ and $G^\prime(\psi)\in L^2(0,T;L^\infty(\Ga))$. The proof of these statements can be found in Section~\ref{SECT:SEPARATION}, see Proposition~\ref{prop:highreg:G} and Proposition~\ref{prop:highreg:F}.
\end{proof}

\section{Separation properties}
\label{SECT:SEPARATION}
This chapter is devoted to the proof of Theorem~\ref{thm:sepprop}, which states the separation properties. Before presenting the proof of Theorem~\ref{thm:sepprop}, we first establish two important propositions.

\begin{proposition}\label{prop:highreg:G}
    Under the assumptions made in Theorem~\ref{thm:highreg} it holds that
    \begin{align}\label{reg:G:2-inf}
        G^\prime(\psi)\in L^2(0,T;L^\infty(\Ga)).
    \end{align}
\end{proposition}

\medskip

\begin{proof}
Let $k\in\N$ and consider as in the proof of Proposition~\ref{Prop:FG:Linf} the truncation $\psi_k\coloneqq\sigma_k\circ\psi$, where $\sigma_k$ is defined in \eqref{sigmak}. For $s\geq 2$, we already showed that 
\begin{align*}
    \norm{G_1^\prime(\psi_k)}_{L^s(\Ga)}^s \leq C\big(1 + \norm{\theta}_{L^s(\Ga)} + d_G\norm{\psi}_{L^s(\Ga)} + \gamma(K)\norm{\deln\phi}_{L^s(\Ga)}\big)
    \norm{G_1^\prime(\psi_k)}_{L^s(\Ga)}^{s-1},
\end{align*}
where $d_G$ was a constant only depending on $G$, cf.~\eqref{est:Gk:r:pre:1} and \eqref{est:Gk:r:pre:K=0}. Now, invoking Hölder's inequality, we obtain
\begin{align}\label{est:Gk:r:pre:2}
    \begin{split}
    \norm{G_1^\prime(\psi_k)}_{L^s(\Ga)}^s \leq C (1 + \abs{\Ga})&\Big(1 + \norm{\theta}_{L^\infty(\Ga)} + d_G\norm{\psi}_{L^\infty(\Ga)} 
    \\
    &\quad+ \gamma(K)\norm{\deln\phi}_{L^\infty(\Ga)}\Big)
    \norm{G_1^\prime(\psi_k)}_{L^s(\Ga)}^{s-1}
    \end{split}
\end{align}
almost everywhere on $[0,T]$. As before, $\gamma(K) = \mathbf{1}_{\{0\}}(K)$ denotes the characteristic function of the set $\{0\}$.
Since $\scp{\phi}{\psi}\in L^\infty(0,T;\mathcal{W}^{2,6})$ and $\theta\in L^2(0,T;H^2(\Ga))$, we use the embeddings $H^2(\Ga)\emb L^\infty(\Ga)$ and $W^{1,6}(\Omega)\emb W^{5/6,6}(\Gamma)\emb L^\infty(\Gamma)$ to infer that
\begin{align*}
    \begin{split}
        &(1 + \abs{\Ga})\big(1 + \norm{\theta}_{L^\infty(\Ga)} + d_G\norm{\psi}_{L^\infty(\Ga)} + \gamma(K)\norm{\deln\phi}_{L^\infty(\Ga)}\big) \\
        &\quad\leq C(1 + \abs{\Ga})\big(1 + \norm{\theta}_{H^2(\Ga)} + d_G\norm{\psi}_{H^2(\Ga)} + \gamma(K)\norm{\phi}_{W^{2,6}(\Omega)}\big) \\
        &\quad\leq C\big(1 + \norm{\theta}_{H^2(\Ga)} \big)
        \eqqcolon c_\Ga(t)
    \end{split}
\end{align*}
for almost all $t\in[0,T]$. Using this estimate to bound the right-hand side of \eqref{est:Gk:r:pre:2}, we conclude
\begin{align}\label{est:Gk:inf}
    \norm{G_1^\prime(\psi_k(t))}_{L^\infty(\Ga)} \leq c_\Ga(t) < \infty
\end{align}
for almost all $t\in[0,T]$. 
Furthermore, it is straightforward to verify that $\psi_k(t)\rightarrow\psi(t)$ a.e.~on $\Ga$ for almost all $t\in[0,T]$ as $k\rightarrow\infty$. 
We also recall that $\abs{\psi_k(t)} < 1$ and $\abs{\psi(t)} < 1$ a.e.~on $\Ga$ for almost all $t\in[0,T]$. 
Consequently, we have
\begin{align}\label{conv:Gk:K}
    G_1^\prime(\psi_k(t))\rightarrow G_1^\prime(\psi(t))\quad\text{a.e.~on~} \Ga \ \text{as~} k\rightarrow\infty
\end{align}
for almost all $t\in[0,T]$. In combination with \eqref{est:Gk:inf}, this proves
\begin{align}\label{est:G:inf}
    \norm{G_1^\prime(\psi(t))}_{L^\infty(\Ga)} \leq c_\Ga(t) < \infty
\end{align}
for almost every $t\in[0,T]$. Finally, taking the square on both sides of \eqref{est:G:inf} and integrating with respect to time from $0$ to $T$, we eventually conclude in view of the regularities $\psi, \theta\in L^2(0,T;H^2(\Ga))$ that $G_1^\prime(\psi)\in L^2(0,T;L^\infty(\Ga))$. As an immediate consequence, we infer $G^\prime(\psi)\in L^2(0,T;L^\infty(\Ga))$.
\end{proof}

\medskip

\begin{proposition}\label{prop:highreg:F}
    Under the assumptions made in Theorem~\ref{thm:highreg} it holds that
    \begin{align}\label{reg:FG:2-inf}
        F^\prime(\phi)\in L^2(0,T;L^\infty(\Om)).
    \end{align}
\end{proposition}

\begin{proof}
    To prove the desired regularity, we handle the cases $K\in(0,\infty]$ and $K=0$ separately. Without loss of generality, we assume $\alpha\neq 0$. The case $\alpha=0$ can be handled similarly and the computations are even easier.
    As in the proof of Proposition~\ref{Prop:FG:Linf}, we consider for $k\in\N$ the functions $\phi_k \coloneqq\alpha\sigma_k\circ(\alpha^{-1}\phi)$ and $\psi_k\coloneqq\sigma_k\circ\psi$, where $\sigma_k$ is defined as in \eqref{sigmak}.

    \textit{The case $K\in(0,\infty]$.}
    For $r\geq 2$ and $k\in\N$, we already showed that 
    \begin{align}\label{est:FG:Lr:2}
        &\intO \abs{F_1^\prime(\phi_k)}^r\dx + \intG \abs{G_1^\prime(\psi_k)}^r\dG \nonumber \\
        &\quad \leq \intO (\abs{\mu} + d_F \abs{\phi})\abs{F_1^\prime(\psi_k)}^{r-1}\dx + \intG (\abs{\theta} + d_G\abs{\psi})\abs{G_1^\prime(\psi_k)}^{r-1}\dG \\
        &\qquad - \sigma(K)\intG (\alpha\psi - \phi)\Big(\alpha G_1^\prime(\psi_k)\abs{G_1^\prime(\psi_k)}^{r-2} - F_1^\prime(\alpha\psi_k)\abs{F_1^\prime(\alpha\psi_k)}^{r-2}\Big)\dG, \nonumber
    \end{align}
    cf.~\eqref{testeq}-\eqref{est:I5}. Here, $d_F$ and $d_G$ are some constants depending only on $F$ and $G$, respectively. Using the regularity $G^\prime(\psi)\in L^2(0,T;L^\infty(\Ga))$, which was already proven in Proposition~\ref{prop:highreg:G}, along with the domination property \eqref{domination}, we obtain for the last term on the right-hand side of \eqref{est:FG:Lr:2} the estimate
    \begin{align}\label{est:I5:2}
        &-\sigma(K)\intG (\alpha\psi - \phi)\Big(\alpha G_1^\prime(\psi_k)\abs{G_1^\prime(\psi_k)}^{r-2} - F_1^\prime(\alpha\psi_k)\abs{F_1^\prime(\alpha\psi_k)}^{r-2}\Big)\dG \\
        &\quad\leq 2\sigma(K)\abs{\Ga}\norm{G_1^\prime(\psi_k)}_{L^\infty(\Ga)}^{r-1} + 2^{r-1}\kappa_1^{r-1}\sigma(K)\abs{\Ga}\norm{G_1^\prime(\psi_k)}_{L^\infty(\Ga)}^{r-1} + 2^{r-1}\kappa_2^{r-1}\sigma(K)\abs{\Ga}. \nonumber
    \end{align}
    Thus, by combining the above estimates \eqref{est:FG:Lr:2} and \eqref{est:I5:2}, we deduce by applying Hölder's and Young's inequality
    \begin{align*}
        \begin{split}
            &\norm{F_1^\prime(\phi_k)}_{L^r(\Om)}^r + \norm{G_1^\prime(\psi_k)}_{L^r(\Ga)}^r
            \\
            &\leq\big(\norm{\mu}_{L^r(\Om)} + d_F\norm{\phi}_{L^r(\Om)}\big)\norm{F_1^\prime(\phi_k)}_{L^r(\Om)}^{r-1} 
            + \big(\norm{\theta}_{L^r(\Ga)} + d_G\norm{\psi}_{L^r(\Ga)}\big)\norm{G_1^\prime(\psi_k)}_{L^r(\Ga)}^{r-1} \\
            &\quad + 2\sigma(K)\abs{\Ga}\Big(\big(1 + 2^{r-2}\kappa_1^{r-1}\big)\norm{G_1^\prime(\psi_k)}_{L^\infty(\Ga)}^{r-1} + 2^{r-2}\kappa^{r-1}_2\Big) \\
            &\leq \frac12\norm{F_1^\prime(\phi_k)}_{L^r(\Om)}^r + \frac12\norm{G_1^\prime(\psi_k)}_{L^r(\Ga)}^r  \\
            &\quad + \frac1r\left(\frac{2(r-1)}{r}\right)^{r-1}\Big(\norm{\mu}_{L^r(\Om)} + \norm{\theta}_{L^r(\Ga)} + d_F\norm{\phi}_{L^r(\Om)} + d_G\norm{\psi}_{L^r(\Ga)}\Big)^r \\
            &\quad + 2\sigma(K)\abs{\Ga}\Big(\big(1 + 2^{r-2}\kappa_1^{r-1}\big)\norm{G_1^\prime(\psi_k)}_{L^\infty(\Ga)}^{r-1} + 2^{r-2}\kappa^{r-1}_2\Big)
        \end{split}
    \end{align*}
    almost everywhere on $[0,T]$. Thus, absorbing the respective terms and taking the $r$-th root, we find
    \begin{align}\label{est:FG:r:final}
            &\norm{F_1^\prime(\phi_k)}_{L^r(\Om)} + \norm{G_1^\prime(\psi_k)}_{L^r(\Ga)} \nonumber \\
            &\leq 2\cdot2^{1/r}\Bigg[\frac{1}{r^{1/r}}\left(\frac{2(r-1)}{r}\right)^{(r-1)/r}\Big(\norm{\mu}_{L^r(\Om)} + \norm{\theta}_{L^r(\Ga)} + d_F\norm{\phi}_{L^r(\Om)} + d_G\norm{\psi}_{L^r(\Ga)}\Big) \nonumber \\
            &\quad + (2\sigma(K)\abs{\Ga})^{1/r}\Big(\big(1 + 2^{r-2}\kappa_1^{r-1}\big)^{1/r}\norm{G_1^\prime(\psi_k)}_{L^\infty(\Ga)}^{(r-1)/r} + 2^{(r-2)/r}\kappa^{(r-1)/r}_2\Big)\Bigg] \\
            &\leq C\Big(1 + \norm{\mu}_{L^r(\Om)} + \norm{\theta}_{L^r(\Ga)} + d_F\norm{\phi}_{L^r(\Om)} + d_G\norm{\psi}_{L^r(\Ga)}\Big)\nonumber
    \end{align}
    almost everywhere on $[0,T]$ for all $r\in[2,\infty)$ and a constant $C>0$ independent of $r$. Next, since $\scp{\phi}{\psi}, \scp{\mu}{\theta}\in L^2(0,T;\mathcal{H}^2)$ and $\mathcal{H}^2\emb\mathcal{L}^\infty$ we find
    \begin{align}\label{est:FkGk:inf}
        \begin{split}
            &\bigabs{F_1^\prime(\phi_k(x,t))} + \bigabs{G_1^\prime(\psi_k(z,t))} \\
            &\leq \norm{F_1^\prime(\phi_k(t))}_{L^\infty(\Om)} + \norm{G_1^\prime(\psi_k(t))}_{L^\infty(\Ga)} \\
            &\leq C\Big(1 + \norm{\mu}_{H^2(\Om)} + \norm{\theta}_{H^2(\Ga)} + d_F\norm{\phi}_{H^2(\Om)} + d_G\norm{\psi}_{H^2(\Ga)}\Big) \\
            &\leq C\big(1 + \norm{\scp{\mu(t)}{\theta(t)}}_{\mathcal{H}^2} \big) 
        \end{split}
    \end{align}
    for almost all $x\in\Omega$, $z\in\Gamma$ and $t\in[0,T]$. 
    Furthermore, it is straightforward to verify that $\phi_k(t)\rightarrow\phi(t)$ a.e.~in $\Om$ for almost all $t\in[0,T]$. We also recall that $\abs{\phi_k(t)} < 1$ and $\abs{\phi(t)} < 1$ a.e.~on $\Om$ for almost all $t\in[0,T]$. Consequently,
    \begin{align}\label{conv:Fk}
        F_1^\prime(\phi_k(t))\rightarrow F_1^\prime(\phi(t))\quad\text{a.e.~in~} \Om \ \text{as~} k\rightarrow\infty
    \end{align}
    for almost all $t\in[0,T]$. Analogously, we infer 
    \begin{align}\label{conv:Gk}
        G_1^\prime(\psi_k(t))\rightarrow G_1^\prime(\psi(t))\quad\text{a.e.~on~} \Ga \ \text{as~} k\rightarrow\infty
    \end{align}
    for almost all $t\in[0,T]$. In combination with \eqref{est:FkGk:inf}, this proves
    \begin{align}\label{est:FG:inf*}
        \bigabs{F_1^\prime(\phi(x,t))} + \bigabs{G_1^\prime(\psi(z,t))} 
        \leq 
        C\big(1 + \norm{\scp{\mu(t)}{\theta(t)}}_{\mathcal{H}^2} \big) < \infty,
    \end{align}
    for almost all $x\in\Omega$, $z\in\Gamma$ and $t\in[0,T]$, which directly implies 
    \begin{align}\label{est:FG:inf}
        \norm{F_1^\prime(\phi(t))}_{L^\infty(\Om)} + \norm{G_1^\prime(\psi(t))}_{L^\infty(\Ga)} \leq C\big(1 + \norm{\scp{\mu(t)}{\theta(t)}}_{\mathcal{H}^2} \big) < \infty
    \end{align}
    for almost all $t\in[0,T]$. Finally, taking the square on both sides of \eqref{est:FG:inf} and integrating with respect to time from $0$ to $T$, we eventually conclude in view of the regularities $\scp{\phi}{\psi}, \scp{\mu}{\theta}\in L^2(0,T;\mathcal{H}^2)$ that $\scp{F_1^\prime(\phi)}{G_1^\prime(\psi)}\in L^2(0,T;\mathcal{L}^\infty)$. As an immediate consequence, we infer $\scp{F^\prime(\phi)}{G^\prime(\psi)}\in L^2(0,T;\mathcal{L}^\infty)$.

    \textit{The case $K=0$.}
    For fixed $r\geq 2$ and $k\in\N$, we test the equation
    \begin{align*}
        \mu = -\Lap\phi + F^\prime(\phi) \qquad\text{a.e. in~}Q
    \end{align*}
    once more with $\eta_k \coloneqq \abs{F_1^\prime(\phi_k)}^{r-2}F_1^\prime(\phi_k)$. 
    The crucial difference compared to the case $K\in(0,\infty]$ is that we now have the trace relation $\phi_k=\alpha\psi_k$ a.e.~on $\Sigma$ at hand.
    Integrating the resulting equation and performing integration by parts, we thus deduce that
    \begin{align*}
        \intO \abs{F_1^\prime(\phi_k)}^r\dx &\leq \intO \big(\abs{\mu} + d_F\abs{\phi}\big)\abs{F_1^\prime(\phi_k)}^{r-1}\dx \\
        &\quad - \intG \deln\phi \,F_1^\prime(\alpha\psi_k)\abs{F_1^\prime(\alpha\psi_k)}^{r-2}\dG.
    \end{align*}
    To control the last term involving the normal derivative $\deln\phi$, we employ the domination property \eqref{domination} together with the regularities $\deln\phi\in L^\infty(0,T;L^\infty(\Ga))$ and $G_1^\prime(\psi_k)\in L^2(0,T;L^\infty(\Ga))$ to obtain
    \begin{align*}
        &- \intG \deln\phi F_1^\prime(\alpha\psi_k)\abs{F_1^\prime(\alpha\psi_k)}^{r-2}\dG \\
        &\quad\leq 2^{r-2}\intG \abs{\deln\phi}\big(\kappa_1^{r-1}\abs{G_1^\prime(\psi_k)}^{r-1} + \kappa_2^{r-1}\big)\dG \\
        &\quad\leq 2^{r-2}\abs{\Ga}\norm{\deln\phi}_{L^\infty(\Ga)}\big(\kappa_1^{r-1}\norm{G_1^\prime(\psi_k)}_{L^\infty(\Ga)}^{r-1} + \kappa_2^{r-1}\big).
    \end{align*}
    This yields
    \begin{align*}
        \norm{F_1^\prime(\phi_k)}_{L^r(\Om)}^r &\leq \Big(\norm{\mu}_{L^r(\Om)} + d_F\norm{\phi}_{L^r(\Om)}\Big)\norm{F_1^\prime(\phi_k)}_{L^r(\Om)}^{r-1} \\
        &\quad + 2^{r-2}\abs{\Ga}\norm{\deln\phi}_{L^\infty(\Ga)}\Big(\kappa_1^{r-1}\norm{G_1^\prime(\psi_k)}_{L^\infty(\Ga)}^{r-1} + \kappa_2^{r-1}\Big) \\
        &\leq \frac12\norm{F_1^\prime(\phi_k)}_{L^r(\Om)}^r + \frac1r\left(\frac{2(r-1)}{r}\right)^{r-1}\Big(\norm{\mu}_{L^r(\Om)} + d_F\norm{\phi}_{L^r(\Om)}\Big)^r \\
        &\quad + 2^{r-2}\abs{\Ga}\norm{\deln\phi}_{L^\infty(\Ga)}\Big(\kappa_1^{r-1}\norm{G_1^\prime(\psi_k)}_{L^\infty(\Ga)}^{r-1} + \kappa_2^{r-1}\Big)
    \end{align*}
    almost everywhere on $[0,T]$. Proceeding as in the case $K\in(0,\infty]$ and invoking \eqref{est:G:inf} from the proof of Proposition~\ref{prop:highreg:G}, we infer 
    \begin{align}\label{est:FG:inf:0}
        \norm{F_1^\prime(\phi(t))}_{L^\infty(\Om)}
        + \norm{G_1^\prime(\phi(t))}_{L^\infty(\Ga)}
        \leq C\big(1 + \norm{\scp{\mu(t)}{\theta(t)}}_{\mathcal{H}^2} \big) 
        <\infty
    \end{align}
    for almost all $t\in[0,T]$. From this estimate, we finally conclude $\scp{F^\prime(\phi)}{G^\prime(\psi)}\in L^2(0,T;\mathcal{L}^\infty)$ and thus, the proof is complete.
\end{proof}

\medskip

\begin{proof}[Proof of Theorem~\ref{thm:sepprop}]
    The proof of part \ref{thm:sepprop:a} simply follows by using the regularity $F^\prime(\phi)\in L^2(0,T;L^\infty(\Om))$ and $G^\prime(\psi)\in L^2(0,T;L^\infty(\Ga))$ established in Proposition~\ref{prop:highreg:G} as well as Proposition~\ref{prop:highreg:F}. 
    In particular, due to \eqref{est:FG:inf}, we can choose
    \begin{align*}
        \delta(t) \coloneqq 1 - \max\Big\{ \big(F_1'\big)^{-1}\big(c(t)\big) \,{,}\, \big(G_1'\big)^{-1}\big(c(t)\big)\Big\}
    \end{align*}
    with 
    \begin{equation*}
        c(t)\coloneq C\big(1 + \norm{\scp{\mu(t)}{\theta(t)}}_{\mathcal{H}^2} \big).
    \end{equation*}
    for almost all $t\in [0,T]$, where $C$ is chosen as in
    \eqref{est:FG:inf} if $K\in(0,\infty]$ or \eqref{est:FG:inf:0} if $K=0$.
    
    Since the proofs of \ref{thm:sepprop:b} and \ref{thm:sepprop:c} work similarly (with obvious modifications), we will only show the latter.    
    First, recall that from \ref{S5} there exist constants $C_1, C_2 > 0$ and $\lambda\in[1,2)$ such that
    \begin{align}\label{est:F''}
        \abs{F^{\prime\prime}(s)}\leq C_1 e^{C_2\abs{F_1^\prime(s)}^\lambda}\quad\text{for all~}s\in(-1,1).
    \end{align}
    Next, since $d=2$, we know that for all $u\in H^1(\Om)$ and $r\in[2,\infty)$ it holds
    \begin{align}
        \norm{u}_{L^r(\Om)}\leq C_\Om\sqrt{r}\norm{u}_{H^1(\Om)}
    \end{align}
    for some constant $C_\Om$ depending only on $\Om$ (see, e.g., \cite[p. 479]{Trudinger1967}). Consequently, we infer from \eqref{est:FG:r:final} that
    \begin{align}\label{est:Lr:r}
        \begin{split}
            \norm{F_1^\prime(\phi)}_{L^r(\Om)} &\leq C\sqrt{r}\Big(\norm{\scp{\mu}{\theta}}_{L^\infty(0,T;\mathcal{H}^1)} + \norm{\scp{\phi}{\psi}}_{L^\infty(0,T;\mathcal{H}^1)}\Big)\\
            &\leq C\sqrt{r}
        \end{split}
    \end{align}
    almost everywhere on $[0,T]$ for all $r\in[2,\infty)$. This inequality allows us to argue similarly to the proof of \cite[Theorem 3.1]{Gal2023b}, and we deduce that
    \begin{align}\label{est:exp}
        \intO\abs{e^{C_2\abs{F_1^\prime(\phi)}^\lambda}}^m\dx \leq C(m)
    \end{align}
    for all $m\in\N$ and some constant $C(m)$ depending on $m$ and the constant $C$ from \eqref{est:Lr:r}. In view of \eqref{est:F''}, inequality \eqref{est:exp} with $m=6$ immediately entails that
    \begin{align}\label{est:F'':inf:6}
        \norm{F_1^{\prime\prime}(\phi)}_{L^\infty(0,T;L^6(\Om))} \leq C.
    \end{align}
    Thus, utilizing the chain rule as well as the continuous embedding $H^2(\Om)\emb W^{1,6}(\Om)$ we obtain
    \begin{align}\label{est:F':cr}
        \begin{split}
            \norm{\Grad F_1^\prime(\phi)}_{L^\infty(0,T;\mathbf{L}^3(\Om))} &\leq \norm{F_1^{\prime\prime}(\phi)}_{L^\infty(0,T;L^6(\Om))}\norm{\Grad\phi}_{L^\infty(0,T;\mathbf{L}^6(\Om))} \\
            &\leq C\norm{\phi}_{L^\infty(0,T;H^2(\Om))} \leq C.
        \end{split}
    \end{align}
    Finally, using \eqref{est:Lr:r} written for $r = 3$, and exploiting the continuous embedding $W^{1,3}(\Om)\emb C(\overline{\Om})$, we infer from \eqref{est:F':cr} that
    \begin{align}\label{est:F':inf:inf}
        \norm{F_1^\prime(\phi(t))}_{L^\infty(\Om)} \leq \norm{F_1^\prime(\phi)}_{L^\infty(0,T;W^{1,3}(\Om))} \leq C \eqqcolon C_\star
    \end{align}
    for almost every $t\in[0,T]$. Since Theorem~\ref{thm:highreg} implies that $\phi\in C(\overline{Q})$, the inequality \eqref{est:F':inf:inf} holds true even for every $t\in[0,T]$. Lastly, taking $\delta_\star \coloneqq 1 - (F_1^\prime)^{-1}(C_\star)$, leads to the desired conclusion.
\end{proof}

\section*{Appendix: \texorpdfstring{$L^p$}{Lp} regularity theory for bulk-surface elliptic systems}
\addcontentsline{toc}{section}{Appendix: \texorpdfstring{$L^p$}{Lp} regularity theory for bulk-surface elliptic systems}
\renewcommand\thesection{A}
\setcounter{theorem}{0}
\setcounter{equation}{0}

\begin{proposition}\label{Prop:Appendix}
    Suppose that $\Omega\subset\R^d$ with $d\in\{2,3\}$ is a bounded domain of class $C^2$ with boundary $\Gamma\coloneqq\partial\Omega$, and let $\alpha\in\R$ and $K\in [0,\infty]$ be arbitrary. Moreover, suppose that the pair $(f,g)\in \mathcal{L}^p$ with $p\ge 2$ fulfills the compatibility condition
    \begin{equation*}
        \begin{cases}
            \alpha\abs{\Omega}\meano{f} + \abs{\Gamma}\meang{g} = 0
            &\text{if $K\in[0,\infty)$},
            \\
            \meano{f} = 0 \quad\text{and}\quad \meang{g} = 0
            &\text{if $K=\infty$},
        \end{cases}
    \end{equation*} 
    and assume that the pair $(u,v) \in \mathcal{H}^1_{K,\alpha}$ is a weak solution of the bulk-surface elliptic problem%
    \begin{subequations}
    \label{BSE}
        \begin{align}
            \label{BSE:1}
            &-\Lap u = f &\text{in $\Omega$},\\
            \label{BSE:2}
            &-\Lapg v + \alpha \deln u = g &\text{on $\Gamma$},\\
            \label{BSE:3}
            &\begin{cases}
                u = \alpha v &\text{if $K=0$},\\
                K\deln u = \alpha v - u &\text{if $K\in (0,\infty)$},\\
                \deln u = 0 &\text{if $K = \infty$}
            \end{cases}
            &\text{on $\Gamma$},
        \end{align}        
    \end{subequations}
    which means that
    \begin{equation*}
        \bigscp{(u,v)}{(\zeta,\xi)}_{K,\alpha} = \bigscp{(f,g)}{(\zeta,\xi)}_{\mathcal{L}^2}
        \quad\text{for all $(\zeta,\xi)\in \mathcal{H}^1_{K,\alpha}$}.
    \end{equation*}
    Then, it holds
    \begin{subequations}
    \begin{align}
        \label{BSE:REG:B}
        u&\in W^{2,p}(\Omega),
        \\
        \label{BSE:REG:S}
        v&\in W^{2,p}(\Gamma),
    \end{align}
    \end{subequations}
    and there exists a constant $C>0$ independent of $u$ and $v$ such that
    \begin{equation}
    \label{REG:W2P}
        \norm{(u,v)}_{\mathcal{W}^{2,p}} \le  C \norm{(f,g)}_{\mathcal{L}^p}.
    \end{equation}
\end{proposition}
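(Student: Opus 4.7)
The plan is to argue by case distinction on the parameter $K\in[0,\infty]$, using as a base case the $H^2$-regularity of $(u,v)$ from \cite[Theorem 3.3]{Knopf2021a} (which already delivers the statement for $p=2$ along with the associated estimate) and then bootstrapping via standard $L^p$-elliptic regularity for the Laplacian in $\Omega$ and the Laplace--Beltrami operator on the closed $C^2$-manifold $\Gamma$. The compatibility assumptions on $(f,g)$ are exactly what is needed to guarantee solvability of the decoupled subproblems that will arise in each step.

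For the uncoupled case $K=\infty$, the boundary condition $\deln u=0$ reduces \eqref{BSE} to two independent problems: a homogeneous Neumann Laplace problem in $\Omega$ with source $f$ satisfying $\meano{f}=0$, and the Laplace--Beltrami equation $-\Lapg v=g$ on $\Gamma$ with $\meang{g}=0$. Standard $L^p$-regularity theory then yields $u\in W^{2,p}(\Omega)$ and $v\in W^{2,p}(\Gamma)$ together with \eqref{REG:W2P}. For the remaining cases $K\in[0,\infty)$, the approach is to iterate the following two steps, starting from $(u,v)\in \mathcal{H}^2$: first, view \eqref{BSE:1},\eqref{BSE:3} as a bulk problem for $u$ with data built from $v$ (Dirichlet datum $\alpha v$ when $K=0$, Robin datum $\alpha v$ when $K\in(0,\infty)$) and apply $L^p$-regularity for the corresponding boundary value problem on the $C^2$-domain $\Omega$; second, interpret \eqref{BSE:2} as $-\Lapg v=g-\alpha\deln u$ on $\Gamma$, use the trace theorem to identify the regularity of $\deln u$, and apply $L^p$-regularity on the closed manifold $\Gamma$ to upgrade the regularity of $v$. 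The improved $v$ is then fed back into the bulk problem, and the procedure repeats.

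The key quantitative point is that this iteration reaches any prescribed $p\in[2,\infty)$ in finitely many steps. Concretely, the cycle of implications
\begin{equation*}
v\in W^{2,q}(\Gamma)\;\Longrightarrow\; v\vert_\Gamma\in W^{2-1/q,q}(\Gamma)\;\Longrightarrow\; u\in W^{2,q}(\Omega)\;\Longrightarrow\; \deln u\in W^{1-1/q,q}(\Gamma)\hookrightarrow L^{q'}(\Gamma),
\end{equation*}
followed by elliptic regularity on $\Gamma$ yielding $v\in W^{2,q'}(\Gamma)$, strictly increases the integrability exponent thanks to the Sobolev embedding on the compact $(d-1)$-dimensional boundary ($d\in\{2,3\}$). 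The main obstacle is the careful bookkeeping of these Sobolev indices, in particular in the three-dimensional case where $\Gamma$ is two-dimensional and several iterations are needed before $q'\ge p$. A secondary technical point is that in each step one must subtract the appropriate mean from the right-hand sides to fit the compatibility constraints of the Neumann/Laplace--Beltrami problems, and absorb the corresponding constant into the $v$- and $u$-unknowns, which is always legitimate in view of the original compatibility condition on $(f,g)$ and the fact that $(u,v)$ is already fixed. Once the iteration terminates, the quantitative estimate \eqref{REG:W2P} follows by combining the individual $L^p$ elliptic estimates obtained at each stage, using the $H^2$-estimate from \cite[Theorem 3.3]{Knopf2021a} as the seed bound to control the lower-order terms that appear through the bootstrap.
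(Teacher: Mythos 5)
Your proposal is correct and follows essentially the same strategy as the paper: the case distinction in $K$, the $\mathcal{H}^2$ base case from \cite[Theorem 3.3]{Knopf2021a}, and the alternation between bulk elliptic regularity and Laplace--Beltrami regularity on $\Gamma$ via trace theorems and Sobolev embeddings, with the $K=\infty$ case handled by decoupling. The only (harmless) difference is in the bootstrap bookkeeping: you iterate the integrability exponent upward at fixed differentiability order $2$, whereas the paper works at the fixed target exponent $p$ throughout and tracks the fractional differentiability order, reaching $W^{2,p}$ after one pass on $\Gamma$ and two on $\Omega$.
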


\begin{proof}
    We will consider the cases $K=0$, $K\in(0,\infty)$ and $K=\infty$ separately. In this proof, the letter $C$ denotes generic positive constants that are independent of $u$ and $v$.

    \textbf{The case $K=0$.} The regularity theory for bulk-surface elliptic system developed in \cite[Theorem 3.3]{Knopf2021a} already yields $(u,v) \in \mathcal{H}^2$ and the equations in \eqref{BSE} are fulfilled in the strong sense.
    Therefore, in the following, we assume $p>2$.
    Moreover, \cite[Theorem 3.3]{Knopf2021a} provides the estimate
    \begin{align}
    \label{REG:A1}
        \norm{(u,v)}_{\mathcal{H}^2} &\le C \norm{(f,g)}_{\mathcal{L}^2}.
    \end{align}   
    Recalling that the boundary is a $(d-1)$-dimensional submanifold of $\R^d$, Sobolev's embedding theorem implies
    \begin{equation*}
        u\vert_\Gamma = \alpha v \in W^{t,p}(\Gamma)
        \quad\text{with}\quad
        t = \frac 52 + \frac{d-1}{p} - \frac{d}{2}.
    \end{equation*}
    Since $f\in L^p(\Omega)$, we use elliptic regularity theory for Poisson's equation with an inhomogeneous Dirichlet boundary condition (see, e.g.,\cite[Theorem~3.2]{Brezzi1987} or \cite[Theorem~{A.2}]{Colli2019a}) to deduce
    \begin{equation*}
        u \in W^{s,p}(\Omega) 
        \quad\text{with}\quad
        s = \min\left\{2, \frac 52 + \frac{d}{p} - \frac{d}{2}\right\}
        \ge 1 + \frac 2p.
    \end{equation*}
    This directly entails
    \begin{equation*}
        \Grad u \in \mathbf{W}^{s-1,p}(\Omega) \emb \mathbf{W}^{2/p,p}(\Omega). 
    \end{equation*}
    Since $\frac2p - \frac1p = \frac 1p$ is positive and not an integer, the trace theorem
    (see, e.g., \cite[Chapter~2, Theorem~2.24]{Brezzi1987}) implies
    \begin{equation*}
        \deln u \in W^{1/p,p}(\Gamma) \emb L^p(\Gamma). 
    \end{equation*}
    Since $g\in L^p(\Gamma)$, applying regularity theory for the Laplace--Beltrami equation (see, e.g., \cite[Lemma~B.1]{Vitillaro2017}), we thus conclude \eqref{BSE:REG:S} along with the estimate
    \begin{align}
    \label{REG:A2}
        \norm{v}_{W^{2,p}(\Gamma)} \le C \norm{g}_{L^p(\Gamma)}.
    \end{align}
    This, in turn, yields
    \begin{equation*}
        u\vert_\Gamma = \alpha v \in W^{2,p}(\Gamma).
    \end{equation*}
    Hence, by means of elliptic regularity theory for Poisson's equation with an inhomogeneous Dirichlet boundary condition (see, e.g.,\cite[Theorem~3.2]{Brezzi1987} or \cite[Theorem~{A.2}]{Colli2019a}), we conclude that \eqref{BSE:REG:B} holds with    
    \begin{align}
    \label{REG:A3}
        \norm{u}_{W^{2,p}(\Omega)} 
        \le C \big( \norm{f}_{L^p(\Omega)} + \norm{v}_{W^{2,p}(\Gamma)} \big)
        \le C \norm{(f,g)}_{\mathcal{L}^p}.
    \end{align}
    Combining \eqref{REG:A2} and \eqref{REG:A3}, we obtain \eqref{REG:W2P}.
    
    \textbf{The case $K\in(0,\infty)$.}
    Also in this case, the regularity theory for bulk-surface elliptic system developed in \cite[Theorem 3.3]{Knopf2021a} already provides $(u,v) \in \mathcal{H}^2$ and the equations in \eqref{BSE} are fulfilled in the strong sense.
    Therefore, in the following, we assume $p>2$.
    Using the trace theorem as well as Sobolev's embedding theorem, we have
    \begin{equation*}
        u \in H^{2}(\Omega) \emb H^{\frac 32}(\Gamma) 
        \emb W^{t,p}(\Gamma)
        \quad\text{with}\quad
        t = 2 + \frac{d-1}{p} - \frac{d}{2}.
    \end{equation*}
    In particular, we also have $v\in H^2(\Gamma) \emb W^{t,p}(\Gamma)$.
    This directly implies
    \begin{equation*}
        \deln u = \frac 1K \big(\alpha v - u\big)
        \in W^{t,p}(\Gamma).
    \end{equation*}
    Since $g\in L^p(\Gamma)$, applying regularity theory for the Laplace--Beltrami equation (see, e.g., \cite[Lemma~B.1]{Vitillaro2017}), we thus conclude \eqref{BSE:REG:S} with
    \begin{align}
    \label{REG:A4}
        \begin{split}
        &\norm{v}_{W^{2,p}(\Gamma)} 
        \le C \big( \norm{g}_{L^p(\Gamma)} + \norm{\deln u}_{W^{t,p}(\Gamma)} \big)
        \\
        &\quad \le C \big( \norm{g}_{L^p(\Gamma)} + \norm{(u,v)}_{\mathcal{H}^2} \big)
        \le C \norm{(f,g)}_{\mathcal{L}^p}.
        \end{split}
    \end{align}
    Moreover, as $f\in L^p(\Omega)$, we use elliptic regularity theory for Poisson's equation with an inhomogeneous Neumann boundary condition (see, e.g.,\cite[Theorem~3.2]{Brezzi1987} or \cite[Theorem~{A.2}]{Colli2019a}) to deduce
    \begin{equation*}
        u \in W^{s,p}(\Omega) 
        \quad\text{with}\quad
        s = \min\left\{2, 3 + \frac{d}{p} - \frac{d}{2}\right\}
        \ge 1 + \frac 2p
    \end{equation*}
    along with the estimate
    \begin{align}
    \label{REG:A5}
    \begin{split}
        &\norm{u}_{W^{s,p}(\Omega)} 
        \le C \big( \norm{f}_{L^p(\Omega)} + \norm{\deln u}_{W^{t,p}(\Gamma)} \big)
        \\
        &\quad \le C \big( \norm{f}_{L^p(\Omega)} + \norm{(u,v)}_{\mathcal{H}^2} \big)
        \le C \norm{(f,g)}_{\mathcal{L}^p}.
    \end{split}
    \end{align}
    Since $1 + \frac2p - \frac1p = 1 + \frac 1p$ is positive and not an integer, the trace theorem implies
    \begin{equation*}
        u\vert_\Gamma \in W^{1+1/p,p}(\Gamma). 
    \end{equation*}
    This entails
    \begin{equation*}
        \deln u = \frac 1K \big(\alpha v - u\big)
        \in W^{1+1/p,p}(\Gamma).
    \end{equation*}
    Hence, using again elliptic regularity theory for Poisson's equation with an inhomogeneous Neumann boundary condition (see, e.g.,\cite[Theorem~3.2]{Brezzi1987} or \cite[Theorem~{A.2}]{Colli2019a}), we conclude that \eqref{BSE:REG:B} holds with 
    \begin{align}
    \label{REG:A6}
    \begin{split}
        &\norm{u}_{W^{2,p}(\Omega)} 
        \le C \big( \norm{f}_{L^p(\Omega)} + \norm{\deln u}_{W^{1+1/p,p}(\Gamma)} \big)
        \\
        &\quad \le C \big( \norm{f}_{L^p(\Omega)} + \norm{u}_{W^{s,p}(\Omega)} \big)
        \le C \norm{(f,g)}_{\mathcal{L}^p}.
    \end{split}
    \end{align}
    Combining \eqref{REG:A4} and \eqref{REG:A6}, we obtain \eqref{REG:W2P}.
    
    \textbf{The case $K=\infty$.}
    In this case, the Poisson--Neumann problem $\big(\eqref{BSE:1},\eqref{BSE:3}\big)$ and the Laplace--Beltrami equation \eqref{BSE:2} are completely decoupled. Applying elliptic regularity for Poisson's equation with homogeneous Neumann boundary condition (see, e.g.,\cite[Theorem~3.2]{Brezzi1987} or \cite[Theorem~{A.2}]{Colli2019a}) and regularity theory for the Laplace--Beltrami equation (see, e.g., \cite[Lemma~B.1]{Vitillaro2017}), respectively, we directly conclude \eqref{BSE:REG:B} and \eqref{BSE:REG:S} along with estimate \eqref{REG:W2P}.

    As the regularities \eqref{BSE:REG:B} and \eqref{BSE:REG:S} as well as the estimate \eqref{REG:W2P} have been established for every choice $K\in[0,\infty]$, the proof is complete.
\end{proof}

%%%%%%%%%%%%%%%%%%%%%%%%%%%%%%%%%%%%%%%%%%%%%%%%%%%%%%%%%%%%%%%%%%%%%%%%%%%%%%%%%
%********************************************************************************
% END OF DOCUMENT
%********************************************************************************
%%%%%%%%%%%%%%%%%%%%%%%%%%%%%%%%%%%%%%%%%%%%%%%%%%%%%%%%%%%%%%%%%%%%%%%%%%%%%%%%%

%\newpage

\section*{Acknowledgment}
The authors would like to thank the anonymous referee for the thorough review of our manuscript and the very helpful comments, which definitely helped to improve this paper.
This research was funded by the Deutsche Forschungsgemeinschaft (DFG, German Research Foundation) – Project 52469428. Moreover, the authors were partially supported by the Deutsche Forschungsgemeinschaft (DFG, German Research Foundation) – RTG 2339. The support is gratefully acknowledged.

% \section*{Competing Interests and funding}
% The authors do not have any financial or non-financial interests that are directly or indirectly related to the work submitted for publication.

% \section*{Data availability statement}
% No further data is used in this manuscript.

%%%%%%%%%%%%%%%%%%%%%%%%%%%%%%%%%%%%%%%%%%%%%%%%%%%%%%%%%%%%%%%%%%%%%%%%%%%%%%%%%
%********************************************************************************
% REFERENCES
%********************************************************************************
%%%%%%%%%%%%%%%%%%%%%%%%%%%%%%%%%%%%%%%%%%%%%%%%%%%%%%%%%%%%%%%%%%%%%%%%%%%%%%%%%

\footnotesize

\bibliographystyle{abbrv}
\bibliography{KS_CCH}

\begin{thebibliography}{10}

\bibitem{Bao2021}
X.~Bao and H.~Zhang.
\newblock Numerical approximations and error analysis of the {C}ahn-{H}illiard
  equation with dynamic boundary conditions.
\newblock {\em Commun. Math. Sci.}, 19(3):663--685, 2021.

\bibitem{Bao2021a}
X.~Bao and H.~Zhang.
\newblock Numerical approximations and error analysis of the {C}ahn-{H}illiard
  equation with reaction rate dependent dynamic boundary conditions.
\newblock {\em J. Sci. Comput.}, 87(3):Paper No. 72, 32, 2021.

\bibitem{Barbu1976}
V.~Barbu.
\newblock {\em Nonlinear semigroups and differential equations in {B}anach
  spaces}.
\newblock Editura Academiei Republicii Socialiste Rom\^ania, Bucharest;
  Noordhoff International Publishing, Leiden, 1976.
\newblock Translated from the Romanian.

\bibitem{Barbu2010}
V.~Barbu.
\newblock {\em Nonlinear differential equations of monotone types in {B}anach
  spaces}.
\newblock Springer Monographs in Mathematics. Springer, New York, 2010.

\bibitem{Brezis}
H.~Br\'{e}zis.
\newblock {\em Op\'{e}rateurs maximaux monotones et semi-groupes de
  contractions dans les espaces de {H}ilbert}, volume No. 5 of {\em
  North-Holland Mathematics Studies}.
\newblock North-Holland Publishing Co., Amsterdam-London; American Elsevier
  Publishing Co., Inc., New York, 1973.
\newblock Notas de Matem\'{a}tica, No. 50. [Mathematical Notes].

\bibitem{Brezzi1987}
F.~Brezzi and G.~Gilardi.
\newblock {Part I. FEM mathematics}.
\newblock In H.~Kardestuncer and D.~H. Norrie, editors, {\em Finite element
  handbook}. McGraw-Hill Book Co., New York, 1987.

\bibitem{Calatroni2013}
L.~Calatroni and P.~Colli.
\newblock Global solution to the {A}llen-{C}ahn equation with singular
  potentials and dynamic boundary conditions.
\newblock {\em Nonlinear Anal.}, 79:12--27, 2013.

\bibitem{Cavaterra2011}
C.~Cavaterra, C.~G. Gal, and M.~Grasselli.
\newblock Cahn-{H}illiard equations with memory and dynamic boundary
  conditions.
\newblock {\em Asymptot. Anal.}, 71(3):123--162, 2011.

\bibitem{Colli2015}
P.~Colli and T.~Fukao.
\newblock Cahn-{H}illiard equation with dynamic boundary conditions and mass
  constraint on the boundary.
\newblock {\em J. Math. Anal. Appl.}, 429(2):1190--1213, 2015.

\bibitem{Colli2019a}
P.~Colli, T.~Fukao, and K.~F. Lam.
\newblock On a coupled bulk-surface {A}llen-{C}ahn system with an affine linear
  transmission condition and its approximation by a {R}obin boundary condition.
\newblock {\em Nonlinear Anal.}, 184:116--147, 2019.

\bibitem{Colli2022a}
P.~Colli, T.~Fukao, and L.~Scarpa.
\newblock The {C}ahn-{H}illiard equation with forward-backward dynamic boundary
  condition via vanishing viscosity.
\newblock {\em SIAM J. Math. Anal.}, 54(3):3292--3315, 2022.

\bibitem{Colli2022}
P.~Colli, T.~Fukao, and L.~Scarpa.
\newblock A {C}ahn-{H}illiard system with forward-backward dynamic boundary
  condition and non-smooth potentials.
\newblock {\em J. Evol. Equ.}, 22(4):Paper No. 89, 31, 2022.

\bibitem{Colli2020}
P.~Colli, T.~Fukao, and H.~Wu.
\newblock On a transmission problem for equation and dynamic boundary condition
  of {C}ahn-{H}illiard type with nonsmooth potentials.
\newblock {\em Math. Nachr.}, 293(11):2051--2081, 2020.

\bibitem{Colli2014}
P.~Colli, G.~Gilardi, and J.~Sprekels.
\newblock On the {C}ahn-{H}illiard equation with dynamic boundary conditions
  and a dominating boundary potential.
\newblock {\em J. Math. Anal. Appl.}, 419(2):972--994, 2014.

\bibitem{Colli2017}
P.~Colli, G.~Gilardi, and J.~Sprekels.
\newblock Global existence for a nonstandard viscous {C}ahn-{H}illiard system
  with dynamic boundary condition.
\newblock {\em SIAM J. Math. Anal.}, 49(3):1732--1760, 2017.

\bibitem{Colli2018}
P.~Colli, G.~Gilardi, and J.~Sprekels.
\newblock On a {C}ahn-{H}illiard system with convection and dynamic boundary
  conditions.
\newblock {\em Ann. Mat. Pura Appl. (4)}, 197(5):1445--1475, 2018.

\bibitem{Colli2018a}
P.~Colli, G.~Gilardi, and J.~Sprekels.
\newblock On the longtime behavior of a viscous {C}ahn-{H}illiard system with
  convection and dynamic boundary conditions.
\newblock {\em J. Elliptic Parabol. Equ.}, 4(2):327--347, 2018.

\bibitem{Colli2018b}
P.~Colli, G.~Gilardi, and J.~Sprekels.
\newblock Optimal velocity control of a viscous {C}ahn-{H}illiard system with
  convection and dynamic boundary conditions.
\newblock {\em SIAM J. Control Optim.}, 56(3):1665--1691, 2018.

\bibitem{Colli2019}
P.~Colli, G.~Gilardi, and J.~Sprekels.
\newblock Optimal velocity control of a convective {C}ahn-{H}illiard system
  with double obstacles and dynamic boundary conditions: a `deep quench'
  approach.
\newblock {\em J. Convex Anal.}, 26(2):485--514, 2019.

\bibitem{Colli2024}
P.~Colli, P.~Knopf, G.~Schimperna, and A.~Signori.
\newblock Two-phase flows through porous media described by a
  {C}ahn-{H}illiard-{B}rinkman model with dynamic boundary conditions.
\newblock {\em J. Evol. Equ.}, 24(4):Paper No. 85, 55, 2024.

\bibitem{Conti2020}
M.~Conti and A.~Giorgini.
\newblock Well-posedness for the {B}rinkman-{C}ahn-{H}illiard system with
  unmatched viscosities.
\newblock {\em J. Differential Equations}, 268(10):6350--6384, 2020.

\bibitem{Fukao2021}
T.~Fukao and H.~Wu.
\newblock Separation property and convergence to equilibrium for the equation
  and dynamic boundary condition of {C}ahn-{H}illiard type with singular
  potential.
\newblock {\em Asymptot. Anal.}, 124(3-4):303--341, 2021.

\bibitem{Gal2006}
C.~G. Gal.
\newblock A {C}ahn-{H}illiard model in bounded domains with permeable walls.
\newblock {\em Math. Methods Appl. Sci.}, 29(17):2009--2036, 2006.

\bibitem{Gal2017}
C.~G. Gal, A.~Giorgini, and M.~Grasselli.
\newblock The nonlocal {C}ahn-{H}illiard equation with singular potential:
  well-posedness, regularity and strict separation property.
\newblock {\em J. Differential Equations}, 263(9):5253--5297, 2017.

\bibitem{Gal2023b}
C.~G. Gal, A.~Giorgini, and M.~Grasselli.
\newblock The separation property for 2{D} {C}ahn-{H}illiard equations: local,
  nonlocal and fractional energy cases.
\newblock {\em Discrete Contin. Dyn. Syst.}, 43(6):2270--2304, 2023.

\bibitem{Gal2023a}
C.~G. Gal, M.~Lv, and H.~Wu.
\newblock On a thermodynamically consistent diffuse interface model for
  two-phase incompressible flows with non-matched densities: Dynamics of moving
  contact lines, surface diffusion, and mass transfer.
\newblock Preprint on ResearchGate:
  \url{https://doi.org/10.13140/RG.2.2.27875.73764}, 2024.

\bibitem{Gal2009}
C.~G. Gal and A.~Miranville.
\newblock Robust exponential attractors and convergence to equilibria for
  non-isothermal {C}ahn-{H}illiard equations with dynamic boundary conditions.
\newblock {\em Discrete Contin. Dyn. Syst. Ser. S}, 2(1):113--147, 2009.

\bibitem{Garcke2020}
H.~Garcke and P.~Knopf.
\newblock Weak solutions of the {C}ahn-{H}illiard system with dynamic boundary
  conditions: a gradient flow approach.
\newblock {\em SIAM J. Math. Anal.}, 52(1):340--369, 2020.

\bibitem{Garcke2023}
H.~Garcke, P.~Knopf, and J.~Wittmann.
\newblock The anisotropic {C}ahn-{H}illiard equation: regularity theory and
  strict separation properties.
\newblock {\em Discrete Contin. Dyn. Syst. Ser. S}, 16(12):3622--3660, 2023.

\bibitem{Garcke2022}
H.~Garcke, P.~Knopf, and S.~Yayla.
\newblock Long-time dynamics of the {C}ahn-{H}illiard equation with kinetic
  rate dependent dynamic boundary conditions.
\newblock {\em Nonlinear Anal.}, 215:Paper No. 112619, 44, 2022.

\bibitem{Garcke2017}
H.~Garcke and K.~F. Lam.
\newblock Analysis of a {C}ahn-{H}illiard system with non-zero {D}irichlet
  conditions modeling tumor growth with chemotaxis.
\newblock {\em Discrete Contin. Dyn. Syst.}, 37(8):4277--4308, 2017.

\bibitem{Gilardi2009}
G.~Gilardi, A.~Miranville, and G.~Schimperna.
\newblock On the {C}ahn-{H}illiard equation with irregular potentials and
  dynamic boundary conditions.
\newblock {\em Commun. Pure Appl. Anal.}, 8(3):881--912, 2009.

\bibitem{Gilardi2010}
G.~Gilardi, A.~Miranville, and G.~Schimperna.
\newblock Long time behavior of the {C}ahn-{H}illiard equation with irregular
  potentials and dynamic boundary conditions.
\newblock {\em Chinese Ann. Math. Ser. B}, 31(5):679--712, 2010.

\bibitem{Gilardi2019}
G.~Gilardi and J.~Sprekels.
\newblock Asymptotic limits and optimal control for the {C}ahn-{H}illiard
  system with convection and dynamic boundary conditions.
\newblock {\em Nonlinear Anal.}, 178:1--31, 2019.

\bibitem{Giorgini2024}
A.~Giorgini.
\newblock On the separation property and the global attractor for the nonlocal
  {C}ahn-{H}illiard equation in three dimensions.
\newblock {\em J. Evol. Equ.}, 24(2):Paper No. 21, 16, 2024.

\bibitem{Giorgini2017}
A.~Giorgini, M.~Grasselli, and A.~Miranville.
\newblock The {C}ahn-{H}illiard-{O}ono equation with singular potential.
\newblock {\em Math. Models Methods Appl. Sci.}, 27(13):2485--2510, 2017.

\bibitem{Giorgini2023}
A.~Giorgini and P.~Knopf.
\newblock Two-phase flows with bulk-surface interaction: thermodynamically
  consistent {N}avier-{S}tokes-{C}ahn-{H}illiard models with dynamic boundary
  conditions.
\newblock {\em J. Math. Fluid Mech.}, 25(3):Paper No. 65, 44, 2023.

\bibitem{Goldstein2011}
G.~R. Goldstein, A.~Miranville, and G.~Schimperna.
\newblock A {C}ahn-{H}illiard model in a domain with non-permeable walls.
\newblock {\em Phys. D}, 240(8):754--766, 2011.

\bibitem{Harder2022}
P.~Harder and B.~Kov\'{a}cs.
\newblock Error estimates for the {C}ahn-{H}illiard equation with dynamic
  boundary conditions.
\newblock {\em IMA J. Numer. Anal.}, 42(3):2589--2620, 2022.

\bibitem{Knopf2021a}
P.~Knopf, K.~F. Lam, C.~Liu, and S.~Metzger.
\newblock Phase-field dynamics with transfer of materials: the {C}ahn-{H}illard
  equation with reaction rate dependent dynamic boundary conditions.
\newblock {\em ESAIM Math. Model. Numer. Anal.}, 55(1):229--282, 2021.

\bibitem{Knopf2021}
P.~Knopf and C.~Liu.
\newblock On second-order and fourth-order elliptic systems consisting of bulk
  and surface {PDEs}: Well-posedness, regularity theory and eigenvalue
  problems.
\newblock {\em Interfaces and Free Boundaries}, 23(4):507--533, 2021.

\bibitem{Knopf2021b}
P.~Knopf and A.~Signori.
\newblock On the nonlocal {C}ahn-{H}illiard equation with nonlocal dynamic
  boundary condition and boundary penalization.
\newblock {\em J. Differential Equations}, 280:236--291, 2021.

\bibitem{Knopf2024}
P.~Knopf and J.~Stange.
\newblock Well-posedness of a bulk-surface convective {C}ahn--{H}illiard system
  with dynamic boundary conditions.
\newblock {\em NoDEA Nonlinear Differential Equations Appl.}, 31(5):Paper No.
  82, 2024.

\bibitem{Liu2019}
C.~Liu and H.~Wu.
\newblock An energetic variational approach for the {C}ahn-{H}illiard equation
  with dynamic boundary condition: model derivation and mathematical analysis.
\newblock {\em Arch. Ration. Mech. Anal.}, 233(1):167--247, 2019.

\bibitem{Lv2024a}
M.~Lv and H.~Wu.
\newblock On the {C}ahn-{H}illiard equation with kinetic rate dependent dynamic
  boundary condition and non-smooth potential: separation property and
  long-time behavior.
\newblock Preprint: \href{https://arxiv.org/pdf/2405.20807}{arXiv.2405.20807
  [math.AP]}, 2024.

\bibitem{Lv2024b}
M.~Lv and H.~Wu.
\newblock {Separation property and asymptotic behavior for a transmission
  problem of the bulk-surface coupled Cahn--Hilliard system with singular
  potentials and its Robin approximation}.
\newblock {\em Analysis and Applications}, pages 1--42, 2024.

\bibitem{Lv2024}
M.~Lv and H.~Wu.
\newblock On the {C}ahn-{H}illiard equation with kinetic rate dependent dynamic
  boundary conditions and non-smooth potentials: well-posedness and asymptotic
  limits.
\newblock {\em Interfaces Free Bound.}, 27(2):177--234, 2025.

\bibitem{Meng2023}
X.~Meng, X.~Bao, and Z.~Zhang.
\newblock Second order stabilized semi-implicit scheme for the
  {C}ahn-{H}illiard model with dynamic boundary conditions.
\newblock {\em J. Comput. Appl. Math.}, 428:Paper No. 115145, 22, 2023.

\bibitem{Metzger2021}
S.~Metzger.
\newblock An efficient and convergent finite element scheme for
  {C}ahn-{H}illiard equations with dynamic boundary conditions.
\newblock {\em SIAM J. Numer. Anal.}, 59(1):219--248, 2021.

\bibitem{Metzger2023}
S.~Metzger.
\newblock {A convergent SAV scheme for Cahn–Hilliard equations with dynamic
  boundary conditions}.
\newblock {\em IMA Journal of Numerical Analysis}, page drac078, 01 2023.

\bibitem{Miranville2020}
A.~Miranville and H.~Wu.
\newblock Long-time behavior of the {C}ahn-{H}illiard equation with dynamic
  boundary condition.
\newblock {\em J. Elliptic Parabol. Equ.}, 6(1):283--309, 2020.

\bibitem{Miranville2004}
A.~Miranville and S.~Zelik.
\newblock Robust exponential attractors for {C}ahn-{H}illiard type equations
  with singular potentials.
\newblock {\em Math. Methods Appl. Sci.}, 27(5):545--582, 2004.

\bibitem{Miranville2010}
A.~Miranville and S.~Zelik.
\newblock The {C}ahn-{H}illiard equation with singular potentials and dynamic
  boundary conditions.
\newblock {\em Discrete Contin. Dyn. Syst.}, 28(1):275--310, 2010.

\bibitem{Poiatti2025}
A.~Poiatti.
\newblock The 3{D} strict separation property for the nonlocal
  {C}ahn-{H}illiard equation with singular potential.
\newblock {\em Anal. PDE}, 18(1):109--139, 2025.

\bibitem{Racke2003}
R.~Racke and S.~Zheng.
\newblock The {C}ahn-{H}illiard equation with dynamic boundary conditions.
\newblock {\em Adv. Differential Equations}, 8(1):83--110, 2003.

\bibitem{Showalter}
R.~E. Showalter.
\newblock {\em Monotone operators in {B}anach space and nonlinear partial
  differential equations}, volume~49 of {\em Mathematical Surveys and
  Monographs}.
\newblock American Mathematical Society, Providence, RI, 1997.

\bibitem{Trudinger1967}
N.~S. Trudinger.
\newblock On imbeddings into {O}rlicz spaces and some applications.
\newblock {\em J. Math. Mech.}, 17:473--483, 1967.

\bibitem{Vitillaro2017}
E.~Vitillaro.
\newblock On the wave equation with hyperbolic dynamical boundary conditions,
  interior and boundary damping and source.
\newblock {\em Arch. Ration. Mech. Anal.}, 223(3):1183--1237, 2017.

\bibitem{Wu2004}
H.~Wu and S.~Zheng.
\newblock Convergence to equilibrium for the {C}ahn-{H}illiard equation with
  dynamic boundary conditions.
\newblock {\em J. Differential Equations}, 204(2):511--531, 2004.

\bibitem{Ziemer}
W.~P. Ziemer.
\newblock {\em Weakly differentiable functions}, volume 120 of {\em Graduate
  Texts in Mathematics}.
\newblock Springer-Verlag, New York, 1989.
\newblock Sobolev spaces and functions of bounded variation.

\end{thebibliography}

\end{document}